\numberwithin{equation}{section}
\newcommand{\MM}{{\mathbb M}}
\newcommand{\Z}{{\mathbb Z}}
\newcommand{\D}{{\mathbb D}}
\newcommand{\F}{{\mathcal F}}
\newcommand{\E}{{\mathfrak E}}
\newcommand{\EE}{{\mathcal E}}
\newcommand{\G}{{\mathcal G}}
\newcommand{\Gk}{{\mathfrak G}}
\newcommand{\M}{{\mathcal M}}
\newcommand{\Hc}{{\mathcal H}}
\newcommand{\Hg}{{\mathfrak H}}
\newcommand{\II}{{\mathfrak I}}
\newcommand{\V}{\mathcal V}
\newcommand{\U}{\mathcal U}
\newcommand{\X}{\mathcal X}
\newcommand{\Y}{\mathcal Y}
\newcommand{\Xb}{\mathbb X}
\newcommand{\Yb}{\mathbb Y}
\newcommand{\LC}{\mathcal L}
\newcommand{\CC}{\mathcal C}
\newcommand{\DD}{\mathcal D}
\newcommand{\SSS}{\mathcal S}
\newcommand{\mD}{\mathcal D}
\newcommand{\Sk}{\mathfrak S}
\newcommand{\cal}{\mathcal}
\newcommand{\Pro}{\mathcal P}
\newcommand{\I}{\mathcal I}
\newcommand{\T}{\mathfrak T}
\newcommand{\Qc}{\mathcal Q}
\newcommand{\Wk}{\mathfrak W}
\newcommand{\RR}{\mathfrak R}
\newcommand{\Ck}{\mathfrak C}
\newcommand{\Dk}{\mathfrak D}
\newcommand{\Kk}{\mathfrak K}
\newcommand{\Ak}{\mathfrak A}
\newtheorem{theorem}{Theorem}[section]
\newtheorem{corollary}[theorem]{Corollary}
\newtheorem{lemma}[theorem]{Lemma}
\newtheorem{proposition}[theorem]{Proposition}
\newtheorem{proposition-definition}[theorem]{Proposition - Definition}
\newtheorem{definition}[theorem]{Definition}
\newtheorem{remark}[theorem]{Remark}
\newtheorem{example}[theorem]{Example}
\begin{document}

\title[A sketch for derivators]{A sketch for derivators}
\author{Giovanni Marelli}

\begin{abstract}
We show first that derivators can be seen as models of a suitable homotopy limit 2-sketch. After discussing homotopy local $\lambda$-presentability of the 2-category of derivators, for some appropriate regular cardinal $\lambda$, as an application we prove that derivators of small presentation are homotopy $\lambda$-presentable objects. 

\end{abstract}

\keywords{Derivators, sketches, locally presentable categories, finite presentation, MSC2010 18C30, 18C35, 18G55, 55U35}
\address{University of Namibia, Department of Computing, Mathematical and Statistical Sciences, 
340 Mandume Ndemufayo Ave., 
13301 Windhoek (Namibia)}
\email{gmarelli@unam.na}
\maketitle

\section{Introduction}
Derivators were introduced by Grothendieck in his manuscript \cite{G} written between the end of 1990 and the beginning of 1991, 
though the term first appeared in his letter to Quillen \cite{GQ} of 1983.
Similar notions appeared, independently, in Heller's work \cite{H1} of 1988 with the name of homotopy theories, and later, in 1996, in Franke's paper \cite{F} with the name of systems of triangulated diagram categories. Then they were studied, for example, by
Heller himself \cite{H2}, Maltsiniotis \cite{M}, Cisinski \cite{C}, \cite{C2}, Cisinski and Neeman \cite{CN}, Keller \cite{K}, Tabuada  \cite{T}, Groth \cite{Gr}, Groth, Ponto and Shulman \cite{GPS}. 

A reason for proposing derivators is to provide a formalism improving that of triangulated categories. In fact, triangulated categories lack a good theory of homotopy limits and homotopy colimits, in the sense that, though they can be defined, they can not be expressed by means of an explicit universal property. An example of this is the non-functoriality of the cone construction. 
Since in the case of the derived category of an abelian category or the homotopy category of a stable model category or of a stable $(\infty,1)$-category, these construction can be made functorial, it means that when passing to the homotopy category the information for the construction of homotopy limits and homotopy colimits is lost. A derivator, as opposed to the homotopy (or derived) category, contains enough information to deal in a satisfactory way with homotopy limits and homotopy colimits. The idea in derivators is not only to consider the homotopy (or derived) category, but also to keep track of the homotopy (or derived) categories of diagrams and homotopy Kan extension between them. An advantage of working with derivators 
is also the possibility of describing them completely by means of the theory of 2-categories.

As proved by Cisinski \cite{C}, model categories give rise to derivators, yielding a pseudo-functor between the 2-category of model categories and the 2-category of derivators. Building on this and on Dugger's result \cite{D2} about presentation of combinatorial model categories, Renaudin \cite{R} proved that the pseudo-localization of the 2-category of combinatorial model categories at the class of Quillen equivalences is biequivalent to the 2-category of derivators of small presentation. These are defined by imposing, in a suitable sense, 
relations on a derivator associated to the model category of simplicial presheaves on a small category $\CC$, which  plays the role of a free derivator on $\CC$. 
In this sense, small presentation of derivators resembles finite presentation of modules over rings or of models of algebraic theories, when given in terms of generators and relations. However, in these last two cases, finite presentation can be characterized also intrinsically: finitely presented modules (or models) are those which represent functors preserving filtered colimits. The search for an analogous intrinsic formulation of small presentation for derivators has been the motivation for this paper.

The main result we have obtained is the construction of a homotopy limit 2-sketch whose homotopy models can be identified with derivators. A (homotopy) limit sketch is a way to describe a theory defined by means of (homotopy) limits. The 2-categories of (homotopy) models of (homotopy) limit 2-sketches are the (homotopy) locally presentable 2-categories. Therefore, the construction of a homotopy limit 2-sketch for derivators, besides providing some kind of algebraic description of derivators, supplies also a framework in which to discuss homotopy presentability. Indeed, as an application, we prove that derivators of small presentation are homotopy $\lambda$-presentable models, partially meeting our original motivation.

We summarize the content of the paper and present the results.  

In section \ref{derivators} we recall (right, left) derivators, as they were defined by Grothendieck \cite{G}, and we present Cisinski's result mentioned above. In this paper, in order to study presentability, we will assume that the 2-category of diagrams $\mathfrak{Dia}$ on which derivators are defined is small with respect to a fixed Grothendieck universe.

In section \ref{sketches}, we recall the definition of the weighted homotopy limit 2-sketch $\Sk$ and of its category of models. 
We explain, then, how to include pseudo-natural transformations as morphisms between models in a new 2-category of models $\mathfrak{hMod}_\Sk^{ps}$.

In section \ref{skder} we present our main result: we prove that the 2-category $\mathfrak{Der}^r$ of right derivators, cocontinuous pseudo-natural transformations (\ref{cc}) and modifications, is the 2-category of models of a weighted homotopy limit 2-sketch, whose construction is explicitly exhibited. 

\newtheorem*{Main}{Theorem \ref{Main}}
\begin{Main}
There exists a weighted homotopy limit 2-sketch $\Sk=(\Gk,\Pro)$ and a biequivalence from the 2-category $\mathfrak{hMod}_\Sk^{ps}$ to the 2-category $\mathfrak{Der}^r$.
\end{Main}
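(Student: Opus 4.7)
The plan is to construct the 2-sketch $\Sk = (\Gk, \Pro)$ explicitly from the algebraic data defining a right derivator, and then identify its homotopy models with right derivators by a direct translation.

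First, I would present the 2-category $\Gk$ freely over $\mathfrak{Dia}$ by the following data: an object $[I]$ for each $I \in \mathfrak{Dia}$; a 1-cell $u^* \colon [J] \to [I]$ for each $u \colon I \to J$; a formal right adjoint $u_* \colon [I] \to [J]$ equipped with formal unit $\eta_u$ and counit $\varepsilon_u$; and the coherence 2-isomorphisms making $[-]$ pseudo-functorial in $\mathfrak{Dia}$, quotiented by the usual 2-categorical relations. The class $\Pro$ of weighted cones is then designed so that requiring a pseudo-functor $M \colon \Gk \to \mathfrak{CAT}$ to send them to homotopy limits enforces precisely the four axioms of a right derivator: the standard triangle-identity weighted cone for each pair $(u^*, u_*)$ encodes Der3; the product cones $[I \sqcup J] \to [I] \times [J]$ and $[\emptyset] \to *$ encode Der1; for each comma square in $\mathfrak{Dia}$ a cone whose limit forces the formal mate 2-cell to be invertible encodes Der4; the jointly conservative family of evaluations at the objects $i \in I$, expressed via an equalizer-of-isomorphisms weight in the Makkai--Par\'e style, encodes Der2.

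Given this sketch, I would define $F \colon \mathfrak{hMod}_\Sk^{ps} \to \mathfrak{Der}^r$ by restricting a model $M$ to the part generated by the $[I]$'s and $u^*$'s, setting $D(I) := M([I])$ and $D(u) := M(u^*)$; by the cone conditions $D$ then satisfies Der1--Der4. Essential surjectivity is obtained by the reverse construction: given a right derivator $D$, choose right adjoints $u_*$ (which exist by Der3) together with their adjunction data, verify Beck--Chevalley via Der4, and extend to a pseudo-functor $\Gk \to \mathfrak{CAT}$ preserving all cones in $\Pro$. For full faithfulness on 1-cells, a pseudo-natural transformation $\alpha \colon M_1 \to M_2$ between models is determined by its components at the objects $[I]$ and the 1-cells $u^*$: compatibility with the adjunction cones forces the component at $u_*$ to be the mate of $\alpha_{u^*}$, and the invertibility of these mates is precisely the definition of cocontinuity recalled in (\ref{cc}). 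Modifications are handled analogously.

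The main obstacle, in my view, lies in two related points. First, one must encode Der2 (conservativity) and Der4 (Beck--Chevalley) as genuine weighted \emph{homotopy} limit cones, not merely strict ones, which requires selecting homotopy-enriched weights so that preservation has the correct invariance under equivalence. Second, one must verify that the mate correspondence transports coherently through pseudo-naturality, so that the sketch-theoretic 1-cells of $\mathfrak{hMod}_\Sk^{ps}$ coincide, under $F$, with exactly the cocontinuous pseudo-natural transformations of derivators; this is the step where the 2-categorical characterization of adjunctions as weighted limits must be imported into the homotopy setting of Section \ref{sketches}. Size issues are dealt with by the assumption in Section \ref{derivators} that $\mathfrak{Dia}$ is small relative to the fixed universe, ensuring that $\Gk$ itself is small and that the sketch machinery applies.
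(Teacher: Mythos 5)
Your overall strategy coincides with the paper's: take a 2-category built freely over $\mathfrak{Dia}^{op}$ with formally adjoined adjunction data, impose cones to force the derivator axioms, define the comparison 2-functor by restriction along $\mathfrak{Dia}^{op}\rightarrow\Gk$, get essential surjectivity by extending a derivator to a model, and control 1-cells through mates and the cocontinuity of definition \ref{cc}. However, two of your encodings do not go through as stated. For axiom 4, ``a cone whose limit forces the formal mate 2-cell to be invertible'' is not something you can write down from data already in $\Gk$: invertibility of a specified 2-cell is not the preservation of a weighted limit of an existing diagram, and the natural attempt (declaring the identity 1-cell to be an inverter-type cone for the mate) requires the cone datum itself to contain an inverse 2-cell, i.e.\ you must freely adjoin that inverse anyway. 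This is exactly what the paper does, adjoining $\alpha_{bc}^{-1}$ subject to the equations \eqref{relquarto}; similarly axiom 3 is handled purely by the adjoined $u_{(!)}$, $\epsilon_{(u_!)}$, $\eta_{(u_!)}$ and the triangle identities \eqref{relterzo}, since strict 2-functors preserve equations -- cones are used only for axioms 1 and 2, so your ``triangle-identity weighted cone'' is redundant, while your axiom-4 cone is a genuine gap. Note also the handedness: a right derivator has \emph{left} adjoints $u_!$, so adjoining right adjoints $u_*$ and taking their mates would give continuity, not the cocontinuity you invoke at the end.

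The second and more serious gap is axiom 2, which is the delicate part of the construction and which your proposal flags as ``the main obstacle'' without resolving it. Joint conservativity is not a strict-limit condition, and a strict ``equalizer-of-isomorphisms'' weight is not invariant under equivalence, so models defined that way would not match derivators up to the pseudo-natural equivalences the biequivalence requires. The paper's proof handles this in three steps: it shows that a functor is conservative exactly when the square \ref{consdia} is a bilimit (lemma \ref{sad}); it repackages the whole family of evaluations $\D(c_D)$ into a single weighted limit $\{G_\CC,F_\CC\}$ indexed by the category $\CC'$ (corollary \ref{finalform}), engineered so that the cone legs -- identities and the constant-functor arrows $c_C$ -- already exist in $\mathfrak{Dia}^{op}$ and no auxiliary objects such as arrow-category symbols need to be adjoined; and it then replaces the weight by a cofibrant replacement $\Qc G_\CC$, using \eqref{coheq}, so that the resulting cone \eqref{secsec} is a legitimate cone of a weighted \emph{homotopy} limit 2-sketch. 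Without an explicit weight of this kind, together with the cofibrancy step, your sketch does not yet define $\Pro$, and the subsequent identification of $\mathfrak{hMod}^{ps}_\Sk$ with $\mathfrak{Der}^r$ cannot be carried out.
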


In section \ref{accessibility} we recall the theory of homotopy presentable categories, together with the notion of presentable object in the homotopic sense. We have: 

\newtheorem*{opls1}{Corollary \ref{opls1}}
\begin{opls1}
$\mathfrak{hMod}^{ps}_\Sk$ is a homotopy locally $\lambda$-presentable 2-category, where $\lambda$ is a regular cardinal bounding the size of every category in $\mathfrak{Dia}$.
\end{opls1}

In section \ref{finalres}, we prove first, in lemma \ref{repf}, after passing to a realized sketch, that representable models correspond to derivators defined by model categories of the form $sSet^{\CC^{op}}$, for some small category $\CC$. As an application, 
derivators can be reconstructed by means of homotopy $\lambda$-filtered colimits as follows:

\newtheorem*{ultimo7}{Corollary \ref{ultimo7}}
\begin{ultimo7}
Any right derivator is a homotopy $\lambda$-filtered colimit in $\mathfrak{Der}^r$ of $\lambda$-small homotopy 2-colimits of derivators of the form $\F(\CC)=\Phi(sSet^{\CC^{op}})$.
\end{ultimo7}

Finally, after recalling Renaudin's definitions and result on small presentability, we obtain:

\newtheorem*{main7}{Theorem \ref{main7}}
\begin{main7}
A derivator of small presentation is a homotopy $\lambda$-presentable object of $\mathfrak{Der}^r$.
\end{main7}

The author would like to thank Kuerak Chung for introducing the topic, Bernhard Keller for bringing this problem to his attention, Steve Lack and John Power for useful suggestions, Georges Maltsiniotis and Mike Shulman for useful comments.


\section{Derivators}
\label{derivators}
In this section we recall derivators as 
introduced by Grothendieck \cite[1]{G}. Derivators of small presentation, defined by Renaudin \cite[3.4]{R}, will be recalled instead in section \ref{finalres}. Besides these two references, introductions to derivators 
are found for instance in \cite{M}, \cite[1]{C} or \cite[1]{Gr}. 

We fix a Grothendieck universe $\U$ and we denote by $\mathfrak{Cat}$ the 2-category of $\U$-small categories, and by $\CC at$ the ordinary category underlying $\mathfrak{Cat}$. 

\begin{definition}
A category of diagrams, which we denote by $\mathfrak{Dia}$, is a full 2-subcategory of $\mathfrak{Cat}$ such that:
\begin{enumerate}
\item it contains the empty category, the terminal category $e$ and the category $\Delta_1=\mathbbm{2}$ associated to the ordered set $\{0<1\}$;
\item it is closed under finite coproducts and pullbacks;
\item it contains the overcategories $\CC/D$ and the undercategories $D\backslash\CC$ corresponding to any functor $u:\CC\rightarrow\DD$ and to any object $D\in\DD$;
\item it is stable under passage to the opposite category.
\end{enumerate}
\end{definition}

Examples of categories of diagrams are $\mathfrak{Cat}$ itself, the 2-category $\mathfrak{Cat}_f$ of finite categories, the 2-category of partially ordered sets 
or the 2-category of finite ordered sets.

In this paper we will assume that $\mathfrak{Dia}$ is $\U$-small, because, although the definitions regarding derivators do not depend on this, this hypothesis guarantees that all limits and colimits with which we will be concerned are $\U$-small. So we will assume the existence of a regular cardinal $\lambda$ such that all the categories in $\mathfrak{Dia}$ are $\lambda$-small.

\begin{definition}
A prederivator 
of domain $\mathfrak{Dia}$ is a strict 2-functor 
\begin{displaymath}
\D:\mathfrak{Dia}^{coop}\rightarrow\mathfrak{Cat}.
\end{displaymath}
\end{definition}

In other words, applying a prederivator $\D$ to the diagram
\begin{displaymath}
\xymatrix{
\CC \rrtwocell^u_v{~\alpha} & & \mD
}
\end{displaymath}
yields the diagram
\begin{displaymath}
\xymatrix{
\D(\mD) & & \D(\CC) \lltwocell^{v^\ast}_{u^\ast}{\alpha^\ast~}
}
\end{displaymath}
where we have set $u^\ast=\D(u)$, $v^\ast=\D(v)$ and $\alpha^\ast=\D(\alpha)$. 

\begin{example}
\rm
For any category $\CC\in\mathfrak{Dia}$, the representable 2-functor $\mathfrak{Dia}(-^{op},\CC)$ is a prederivator of domain $\mathfrak{Dia}$. 
Actually, any $\CC\in\mathfrak{Cat}$ defines a prederivator of domain $\mathfrak{Dia}$. 
\end{example}

We present now the definitions of derivator, right derivator and left derivator, as introduced by Grothendieck \cite{G}. There are other variants, which, however, we do not consider in this paper
(see, for instance, \cite[1]{CN}). 

\begin{definition}
\label{derivatordef}
A derivator is a prederivator $\D$ satisfying the following axioms.
\begin{enumerate}
\item For every $\CC_0$ and $\CC_1$ in $\mathfrak{Dia}$, the functor
\begin{equation*} 
\D(\CC_0\amalg\CC_1)\longrightarrow\D(\CC_0)\times\D(\CC_1), 
\end{equation*}
induced by the canonical inclusions $\CC_i\rightarrow\CC_0\amalg\CC_1$, is an equivalence of categories. 
Moreover, $\D(\varnothing)$ is equivalent to the terminal category $e$.
\item A morphism $f:A\rightarrow B$ of $\D(\CC)$ is an isomorphism if and only if, for any object $D$ of $\CC$, the morphism in $\D(e)$
\begin{equation*} 
c_D^\ast(f):c_D^\ast(A)\longrightarrow c_D^\ast(B)
\end{equation*}
is an isomorphism, where $c_D:e\rightarrow\CC$ denotes the constant functor at $D$.
\item For every $u:\CC\rightarrow\mD$ in $\mathfrak{Dia}$, the functor
\begin{displaymath}
u^\ast:\D(\mD)\longrightarrow\D(\CC)
\end{displaymath}
has both left and right adjoints
\begin{align}
 & u_!:\D(\CC)\longrightarrow\D(\mD) \label{hdi} \\
 & u_\ast:\D(\CC)\longrightarrow\D(\mD), \label{cdi}
\end{align}
called homological and cohomological direct image functor respectively.
\item Consider diagrams in $\mathfrak{Dia}$ of the form
\begin{displaymath}
\xymatrix{
D\backslash\CC \ar[r]^f  \ar[d]_t & \CC \ar[d]^u & & \CC/D \ar[r]^f  \ar[d]_t \drtwocell<\omit>{~\beta} & \CC \ar[d]^u \\
e \ar[r]_{c_D} & \DD \ultwocell<\omit>{\alpha~} & & e \ar[r]_{c_D} & \DD
}
\end{displaymath}
where $D\in\DD$, $t$ is the unique functor to the terminal category $e$, $f$ the obvious forgetful functor, $c_D$ the constant functor at $D$, $\alpha$ and $\beta$ the canonical natural transformations.
Apply $\D$ 
\begin{displaymath}
\xymatrix{
\D(D\backslash\CC) \drtwocell<\omit>{~\alpha^\ast} & \D(\CC) \ar[l]_{f^\ast} & & \D(\CC/D) & \D(\CC) \ar[l]_{f^\ast} \\
\D(e) \ar[u]^{t^\ast} & \D(\DD) \ar[l]^{c_D^\ast} \ar[u]_{u^\ast} & & \D(e) \ar[u]^{t^\ast} & \D(\DD) \ar[l]^{c_D^\ast} \ar[u]_{u^\ast} \ultwocell<\omit>{\beta^\ast~}
}
\end{displaymath}
and use axiom 3 to construct the Beck-Chevalley transformations 
\begin{align}
\alpha^\ast_{bc} & :t_! f^\ast\Rightarrow c_D^\ast u_! \label{hdi4} \\
\beta^\ast_{bc} & :c_D^\ast u_\ast\Rightarrow t_\ast f^\ast, \label{cdi4}
\end{align}
shown in the diagrams
\begin{displaymath}
\xymatrix{
\D(D\backslash\CC) \ar[d]_{t_!} & \D(\CC) \ar[d]^{u_!} \ar[l]_{f^\ast} & & \D(\CC/D) \ar[d]_{t_\ast} & \D(\CC) \ar[d]^{u_\ast} \ar[l]_{f^\ast} \dltwocell<\omit>{\beta^\ast_{bc}~} \\
\D(e) \urtwocell<\omit>{~~\alpha^\ast_{bc}} & \D(\DD) \ar[l]^{c_D^\ast} & & \D(e) & \D(\DD) \ar[l]^{c_D^\ast}
}
\end{displaymath}
and given respectively by the composites
\begin{align*}
t_! f^\ast & \Rightarrow t_! f^\ast u^\ast u_! \Rightarrow t_! t^\ast c_D^\ast u_! \Rightarrow c_D^\ast u_! \\ 
c_D^\ast u_\ast & \Rightarrow t_\ast t^\ast c_D^\ast u_\ast \Rightarrow t_\ast f^\ast u^\ast u_\ast \Rightarrow  t_\ast f^\ast.
\end{align*}
Then the natural transformations $\alpha^\ast_{bc}$ and $\beta^\ast_{bc}$ are isomorphisms.
\end{enumerate}
\end{definition}

\begin{definition}
\label{weakder}
A right derivator is a prederivator such that:
\begin{itemize}
\item it satisfies axioms 1 and 2; 
\item it admits homological direct image functors $u_!$ for any functor $u$ in $\mathfrak{Dia}$; 
\item every $\alpha^\ast_{bc}$ as in \eqref{hdi4} is an isomorphism.
\end{itemize}
A left derivator is defined in an analogous way. 
\end{definition}


\begin{example}
\rm
\label{modcatder}
Let $\M$ be a model category
and $W$ the class of its weak equivalences. The prederivator ${\rm Ho}[-^{op},\M]$, which on objects $\CC\in\mathfrak{Dia}$ is defined as the homotopy category
\begin{equation*}
{\rm Ho}[\CC^{op},\M]=[\CC^{op},\M][W_{\CC}^{-1}],
\end{equation*} 
where $W_{\CC}$ is the class of objectwise weak equivalences,
defines a derivator. Its value on the terminal category $e$ is just the homotopy category ${\rm Ho}(\M)$ of $\M$. Its complete definition and the proof that it does define a derivator is the subject of \cite{C}. 
\end{example}

We use pseudo-natural transformations to define 1-morphisms of 
derivators.

\begin{definition}
\label{morphisms}
A morphism of prederivators $\theta:\D_1\rightarrow\D_2$ 
is a pseudo-natural transformation $\theta:\D_1\Rightarrow\D_2$.
%
\end{definition}

Explicitly, a pseudo-natural transformation $\theta:\D_1\Rightarrow\D_2$ consists of the following data:
\begin{enumerate}
\item for any $\CC\in\mathfrak{Dia}$, a functor 
\begin{displaymath}
\theta_\CC:\D_1(\CC)\longrightarrow\D_2(\CC);
\end{displaymath}
\item for any $\CC$, $\DD$ and $u:\CC\rightarrow\DD$ in $\mathfrak{Dia}$, 
an isomorphism
\begin{displaymath}
\beta^\theta_{\CC\DD_u}\equiv\beta^\theta_u:u^\ast_2\circ\theta_\DD\Rightarrow\theta_\CC\circ u^\ast_1,
\end{displaymath}
where $u^\ast_i=\D_i(u)$ for $i=1,2$, which is
natural in $u$, 
that is, for any $\alpha:u\Rightarrow v$ in $\mathfrak{Dia}$ the diagram
\begin{displaymath}
\xymatrix{
v_2^\ast\circ\theta_\DD \ar[r]^{\beta^\theta_v}  \ar[d]_{\alpha_2^\ast\ast\theta_\DD} & \theta_\CC\circ v_1^\ast \ar[d]^{\theta_\CC\ast\alpha^\ast_1} \\
u_2^\ast\circ\theta_\DD \ar[r]_{\beta^\theta_u} & \theta_\CC\circ u_1^\ast 
}
\end{displaymath}
is commutative;
\end{enumerate}
these data are required to fulfill the following coherence conditions 
\begin{align*}
\beta^\theta_{1_\CC} & =1_{\theta_\CC} \\
\beta^\theta_{vu} & =(\beta^\theta_{u}\ast v_1^\ast)\circ(u_2^\ast\ast\beta^\theta_{v})
\end{align*}
for any composable $u$ and $v$.

\begin{definition}
\label{cc}
A morphism of right derivators $\theta:\D_1\rightarrow\D_2$ is cocontinuous if it is
compatible with the homological direct image functors, namely,
for every $u$ in $\mathfrak{Dia}$ the Beck-Chevalley transform 
\begin{align*}
\beta^\theta_{u_!} & :u_{2!}\circ\theta_\CC\Rightarrow\theta_\DD\circ u_{1!}
\end{align*}
is an isomorphism.

Continuous morphisms of (left) derivators are defined in an analogous way. 
\end{definition}

It remains to define 2-morphisms of derivators.

\begin{definition}
\label{mod}
Given two (pre)derivators $\D_1$ and $\D_2$ 
and two morphisms $\theta_1$, $\theta_2:\D_1\rightarrow\D_2$, a 2-morphism $\lambda:\theta_1\rightarrow\theta_2$ is a modification $\lambda:\theta_1\Rrightarrow\theta_2$ between the underlying pseudo-natural transformations.
\end{definition}

Explicitly, a modification $\lambda:\theta_1\Rrightarrow\theta_2$ consists of a family of natural transformations 
\begin{equation*}
\lambda_\CC:\theta_{1\CC}\Rightarrow\theta_{2\CC}
\end{equation*}
for any $\CC\in\mathfrak{Dia}$, such that for every $u:\CC\rightarrow\DD$ of $\mathfrak{Dia}$ the diagram
\begin{gather}
\begin{aligned}
\xymatrix{
u_2^\ast\circ\theta_{1\CC} \ar[r]^{\beta^{\theta_1}_u}  \ar[d]_{u_2^\ast\ast\lambda_\CC} & \theta_{1\DD}\circ u_1^\ast \ar[d]^{\lambda_\DD\ast u_1^\ast} \\
u_2^\ast\circ\theta_{2\CC} \ar[r]_{\beta^{\theta_2}_u} & \theta_{2\DD}\circ u_1^\ast 
}
\end{aligned}
\label{modi}
\end{gather}
is commutative.

We organize what has been introduced so far into the following 2-categories:
\begin{enumerate}
\item $\mathfrak{PDer}$ the 2-category of prederivators, morphisms of prederivators and 2-morphisms,
\item $\mathfrak{Der}^r$ the 2-category of right derivators, cocontinuous morphisms and 2-morphisms, 
\item $\mathfrak{Der}^l$ the 2-category of left derivators, continuous morphisms and 2-morphisms, 
\item $\mathfrak{Der}^{rl}$ the 2-category of derivators, continuous and cocontinuous morphisms and 2-morphisms, 
\item $\mathfrak{Der}_{ad}$ the 2-category of derivators, 
morphisms of derivators whose components have right adjoints, and modifications. 
\end{enumerate}

We conclude this section by telling more about the relationship between derivators and model categories outlined in example \ref{modcatder}.
Let $\mathfrak{ModQ}$ denote the 2-category of model categories, 
left Quillen functors and 
natural transformations. 
Cisinski proved in \cite{C} that the map in example \ref{modcatder} 
\begin{align*}
ob\mathfrak{ModQ} & \longrightarrow ob\mathfrak{Der}_{ad} \\
\M & \longmapsto {\rm Ho}[-^{op},\M]
\end{align*}
extends to 1-morphisms and 2-morphisms: he showed that 
a Quillen adjunction $F:\M_1\rightleftarrows\M_2:G$ induces for any $\CC\in\mathfrak{Dia}$ an adjunction of total derived functors
\begin{displaymath}
{\bf L}\tilde{F}:{\rm Ho}[\CC^{op},\M_1]\rightleftarrows {\rm Ho}[\CC^{op},\M_2]:{\bf R} \tilde{G},
\end{displaymath}
where $\tilde{F}$ and $\tilde{G}$ act by composing with $F$ and $G$ respectively,
and so it defines a pair of adjoint morphisms between the corresponding derivators. 

\begin{theorem}
\label{cisinski}
The construction above defines a pseudo-functor 
\begin{equation*}
\Phi:\mathfrak{ModQ}\rightarrow\mathfrak{Der}_{ad}
\end{equation*}
taking Quillen equivalences to equivalences of derivators.
\end{theorem}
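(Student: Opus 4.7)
The plan is to construct $\Phi$ level by level and then verify the pseudo-functor axioms and the Quillen-equivalence statement. On objects, $\Phi(\M) := {\rm Ho}[-^{op},\M]$ is already known from example \ref{modcatder} to be a derivator. On a left Quillen functor $F:\M_1\to\M_2$ with right adjoint $G$, post-composition yields an adjunction $\tilde{F}\dashv\tilde{G}$ between $[\CC^{op},\M_1]$ and $[\CC^{op},\M_2]$ for each $\CC\in\mathfrak{Dia}$. Because $F$ preserves weak equivalences between cofibrant objects (Ken Brown's lemma) and $G$ preserves them between fibrant ones, and because weak equivalences in the diagram categories are detected objectwise, $\tilde{F}$ and $\tilde{G}$ admit total derived functors ${\bf L}\tilde{F}_\CC$ and ${\bf R}\tilde{G}_\CC$ that form an adjunction at the level of homotopy categories.

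Next I would verify that the family $\{{\bf L}\tilde{F}_\CC\}_\CC$ assembles into a morphism of derivators. The key point is that pre-composition by $u:\CC\to\DD$ commutes strictly with post-composition by $F$ at the level of diagram categories, so the comparison map $u^\ast\circ{\bf L}\tilde{F}_\DD \Rightarrow {\bf L}\tilde{F}_\CC\circ u^\ast$ arises from a strict equality before deriving and becomes a canonical isomorphism after deriving, since $u^\ast$ preserves objectwise weak equivalences and cofibrant replacements can be chosen pointwise. The coherence identities for a pseudo-natural transformation and the pseudo-functoriality of $\Phi$, i.e. $\Phi(GF)\cong\Phi(G)\Phi(F)$ and $\Phi(1_\M)\cong 1_{\Phi(\M)}$, then follow from uniqueness of total derived functors up to canonical isomorphism, together with the fact that composites of left Quillen functors are left Quillen. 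Since every component has ${\bf R}\tilde{G}_\CC$ as a right adjoint, the morphism indeed lands in $\mathfrak{Der}_{ad}$.

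For 2-morphisms, a natural transformation $\alpha:F_1\Rightarrow F_2$ between left Quillen functors yields a natural transformation $\tilde{\alpha}:\tilde{F}_1\Rightarrow\tilde{F}_2$, and whiskering with a cofibrant replacement produces a derived natural transformation ${\bf L}\tilde{\alpha}_\CC:{\bf L}\tilde{F}_{1\CC}\Rightarrow{\bf L}\tilde{F}_{2\CC}$; the naturality squares \eqref{modi} over $u:\CC\to\DD$ commute because the comparison isomorphisms come from strict equalities at the undervied level. Finally, if $(F,G)$ is a Quillen equivalence, then the derived unit and counit of $(\tilde{F}_\CC,\tilde{G}_\CC)$ are objectwise isomorphisms in ${\rm Ho}(\M_i)$, hence isomorphisms in ${\rm Ho}[\CC^{op},\M_i]$ by axiom 2 of a derivator, so each ${\bf L}\tilde{F}_\CC$ is an equivalence and $\Phi(F)$ is an equivalence of derivators.

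The main obstacle will be the construction of the derived functors ${\bf L}\tilde{F}$ and ${\bf R}\tilde{G}$ on $[\CC^{op},\M]$ without assuming that this category itself carries a model structure, since $\M$ is not presumed cofibrantly generated and $\CC$ is only a category in $\mathfrak{Dia}$. One must either produce functorial cofibrant/fibrant replacements for diagrams using only the model structure of $\M$, or invoke Cisinski's framework of \emph{catégories dérivables} from \cite{C}; once this is in place, the remaining verifications are essentially formal manipulations of mate correspondences and the universal property of total derived functors.
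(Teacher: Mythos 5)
You should first be aware that the paper itself gives no proof of Theorem \ref{cisinski}: it is stated as Cisinski's theorem and attributed to \cite{C} (the paper even says, after example \ref{modcatder}, that the complete construction and verification ``is the subject of \cite{C}''), so there is no in-paper argument to compare with step by step. Your outline does follow the same route as that source and as Renaudin's use of it in \cite{R}: $\Phi(\M)={\rm Ho}[-^{op},\M]$ on objects, the total derived adjunction $({\bf L}\tilde F_\CC,{\bf R}\tilde G_\CC)$ on a Quillen pair, comparison 2-cells obtained by deriving the strict commutation of precomposition by $u$ with postcomposition by $F$, pseudo-functoriality from the universal property of derived functors, membership in $\mathfrak{Der}_{ad}$ from the componentwise right adjoints, and the Quillen-equivalence statement from objectwise derived units/counits together with axiom 2.

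As a self-contained proof, however, the proposal has a genuine gap, and it is exactly the point you flag in your last paragraph: for a general model category $\M$ (no cofibrant generation, no functorial factorizations are assumed in $\mathfrak{ModQ}$) the category $[\CC^{op},\M]$ carries no known model structure, ``cofibrant replacements can be chosen pointwise'' is not available, and it is not formal that ${\bf L}\tilde F_\CC$ and ${\bf R}\tilde G_\CC$ exist, that they are adjoint, that they are computed objectwise, or that the mates $u^\ast\circ{\bf L}\tilde F_\DD\Rightarrow{\bf L}\tilde F_\CC\circ u^\ast$ are invertible and coherent. These claims are the actual mathematical content of \cite{C} (Cisinski's theory of derivable categories and absolute derived functors), not ``essentially formal manipulations''; the same issue underlies your Quillen-equivalence step, which needs the derived unit and counit of $(\tilde F_\CC,\tilde G_\CC)$ to be computed objectwise before the conservativity axiom can be applied. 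So either you invoke \cite{C} at that point --- in which case your argument is a correct reduction to the theorem being cited, which is in effect what the paper does --- or you must restrict to a setting where your pointwise argument literally works (e.g.\ cofibrantly generated $\M$ with projective model structures on diagram categories), which proves a weaker statement than the one asserted.
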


We will use the symbol $\Phi(\M)$ for the derivator ${\rm Ho}[-^{op},\M]$ constructed from a model category $\M$.

We will recall other facts about derivators, especially the definition of small presentation, in section \ref{finalres}.

\section{Sketches}
\label{sketches}
Sketches, introduced by Ehresmann \cite{E}, are a way of presenting a theory which can be defined by means of limits and colimits. 
It turns out that the categories of models of sketches can be characterized intrinsically as the accessible categories (Lair \cite[1-2]{Lr}), and, in particular, the categories of models of limit sketches are the locally presentable categories (Gabriel and Ulmer \cite{GU}). 

Though the underlying idea is the same, there are different types of sketches, depending on the type of limits and colimits which define the theory we want to describe. 
In this section we recall, in some detail, homotopy limit 2-sketches: in fact, in section \ref{skder} we will prove that derivators can be identified, up to equivalence, with the 
homotopy models of a sketch of this type. The 2-category of homotopy models, pseudo-natural transformations and modifications is then homotopy locally presentable.
As an application, in section \ref{finalres}, we will use this framework to study small presentability of derivators.

Homotopy limit sketches were proposed by Rosick\'y \cite{Ro} with the purpose of extending rigidification results of Badzioch \cite{Ba} and Bergner \cite{Be} to finite limit theories. Lack and Rosick\'y in \cite{LR} proved that the $\V$-categories of homotopy models of homotopy limit $\V$-sketches can be characterized as the homotopy locally presentable $\V$-categories. 

We will consider only the case $\V=\CC at$, since this is the one of derivators.
Recall that $\CC at$ has a model structure, known as the standard model structure, where weak equivalences are the equivalences of categories, and fibrations are the isofibrations; 
this model structure is combinatorial, 
all objects are fibrant and, assuming the axiom of choice, also cofibrant, moreover, $\mathfrak{Cat}$ becomes a monoidal model 2-category (in the sense of \cite[A.3.1.2]{Lu}). 


If $\E$ is a small 2-category, then the category underlying $[\E,\mathfrak{Cat}]$, endowed with the projective model structure, is also a combinatorial model category, whose 
cofibrant objects can be characterized as follows.
Recall that the inclusion
\begin{equation*}
i:[\E,\mathfrak{Cat}]\hookrightarrow \Pro s(\E,\mathfrak{Cat})
\end{equation*}
has a left adjoint $\Qc$ (see \cite[2.2]{Bk}), where $\Pro s(\E,\mathfrak{Cat})$ denotes the 2-category of 2-functors $\E\rightarrow\mathfrak{Cat}$, pseudo-natural transformations and modifications. 
Thus, for 2-functors $G,H:\E\rightarrow\mathfrak{Cat}$, there is a natural isomorphism of categories
\begin{equation}
\label{coheq}
\mathfrak[\E,\mathfrak{Cat}](\Qc G,H)\cong\Pro s(\E,\mathfrak{Cat})(G,H).
\end{equation}
The counit and unit computed at a functor $G:\E\rightarrow\mathfrak{Cat}$ are given by a 2-natural transformation
$\varepsilon_G:\Qc(G)\rightarrow G$ and a pseudo-natural transformation $\eta_G:G\rightarrow\Qc(G)$ respectively.
One of the triangle equations tells us that $\varepsilon_G\circ\eta_G=1_G$. Since $\eta_G\circ\varepsilon_G\cong1_G$ (see \cite[4.2]{BKP}), it follows that $\Qc G$ and $G$ are equivalent in $\Pro s(\E,\mathfrak{Cat})$. If $\varepsilon$ has a section in $[\E,\mathfrak{Cat}]$, then $\Qc G$ and $G$ are equivalent also in $\mathfrak[\E,\mathfrak{Cat}]$ and $G$ is said to be flexible (see \cite[4.3]{La} and \cite[4.7]{BKP}). As proved in \cite[4.12]{La1}, flexible 2-functors are exactly the cofibrant objects of $[\E,\mathfrak{Cat}]$ with respect to the projective model structure, and $\Qc G$ is indeed a cofibrant replacement of $G$. 


\begin{definition}
\label{hwl}
Let $\Gk$ be a 2-category, $F:\E\rightarrow\Gk$ and $G:\E\rightarrow\mathfrak{Cat}$ be 2-functors, where $\E$ is a small 2-category. Assume $G$ is a cofibrant object of the category $[\E,\mathfrak{Cat}]$ endowed with the projective model structure. The homotopy 2-limit of $F$ weighted by $G$ exists when there is an object $\{G,F\}_h\in\Gk$ and for every object $\DD$ of $\Gk$ an equivalence of categories
\begin{equation}
\label{defhwl}
\Gk(\DD,\{G,F\}_h)\longrightarrow[\E,\mathfrak{Cat}](G,\Gk(\DD,F-))
\end{equation}
which is 2-natural in $\DD$.
\end{definition}

In a similar way we define the homotopy 2-colimit 
$G\star_h F$ of $F:\E\rightarrow\Gk$ weighted by $G:\E^{op}\rightarrow\mathfrak{Cat}$ by replacing formula (\ref{defhwl}) with 
\begin{equation*}
\Gk(G\star_h F,\DD)\longrightarrow[\E,\mathfrak{Cat}](G,\Gk(F-,\DD)).
\end{equation*}

The following definitions are from from \cite[2]{Ro}. 

\begin{definition}
\label{enrsk}
A weighted limit 2-sketch is a pair $\Sk=(\Gk,\Pro)$ where:
\begin{enumerate}
\item $\Gk$ is a small 2-category;
\item $\Pro$ is a set of 2-cones, that is, quintuples $(\E,F,G,\LC,\gamma)$ where $\E$ is a small 2-category, the diagram $F:\E\rightarrow\Gk$ and the weight $G:\E\rightarrow\mathfrak{Cat}$ are 2-functors, the vertex $\LC$ is an object of $\Gk$ and $\gamma:G\Rightarrow\Gk(\LC,F-)$ is a 2-natural transformation.
\end{enumerate}
A weighted homotopy limit 2-sketch is a weighted limit 2-sketch $\Sk=(\Gk,\Pro)$ with all weights cofibrant.
\end{definition}

\begin{definition}
\label{hm}
A homotopy model of a weighted homotopy limit 2-sketch $\Sk$ is a 2-functor $\MM:\Gk\rightarrow\mathfrak{Cat}$ transforming the cones of $\Pro$ into weighted homotopy 2-limits. We denote by $\mathfrak{hMod}_{\Sk}$ the full 2-subcategory of $[\Gk,\mathfrak{Cat}]$ spanned by the homotopy models of the weighted homotopy limit 2-sketch $\Sk$. 
\end{definition}

The 2-categories of the form $\mathfrak{hMod}_{\Sk}$ for some weighted homotopy limit 2-sketch $\Sk$ are the homotopy locally presentable 2-categories: this fact \cite[9.14(1)]{LR} is a consequence of \cite[9.10]{LR} (and, actually, holds for a more general $\V$). We will return to these results and to homotopy locally presentable 2-categories in \ref{summaryLR}.

To recover morphisms of derivators, we have to consider pseudo-natural transformations as morphisms between homotopy models. This motivates the following definition.

\begin{definition}
\label{mskkk1nh}
If $\Sk$ is a weighted homotopy limit 2-sketch, we define $\mathfrak{hMod}_\Sk^{ps}$ to be the full 2-subcategory of $\Pro s(\Gk,\mathfrak{Cat})$ spanned by the homotopy models.
\end{definition}

\section{A sketch for derivators}
\label{skder}

In this section we prove, by giving an explicit construction, that $\mathfrak{Der}^r$ is the 2-category $\mathfrak{hMod}_\Sk^{ps}$ of homotopy models of a homotopy limit 2-sketch $\Sk$. Analogous results hold for $\mathfrak{Der}^{l}$ and $\mathfrak{Der}^{rl}$, however, here we consider just the case of $\mathfrak{Der}^r$, since this is the one relevant to study of presentability of derivators. 

We recall that a biequivalence between 2-categories is a pseudo-functor which is 2-essentially surjective (surjective on objects up to equivalence), and a local equivalence (essentially full on 1-morphisms and full and faithful on 2-morphisms), see \cite[1.1.4-5]{R} and \cite[1.5.13]{Le}.

\begin{theorem}
\label{Main}
There exists a weighted homotopy limit 2-sketch $\Sk=(\Gk,\Pro)$ and a biequivalence from the 2-category $\mathfrak{hMod}_\Sk^{ps}$ to the 2-category $\mathfrak{Der}^r$.
\end{theorem}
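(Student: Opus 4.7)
The plan is to construct $\Sk=(\Gk,\Pro)$ as a presentation-by-generators-and-relations sketch of the theory of right derivators, and then check the biequivalence by matching data. First I would build $\Gk$ as follows. Include a copy of $\mathfrak{Dia}^{coop}$: an object $g_\CC$ for each $\CC\in\mathfrak{Dia}$, a 1-morphism $g_{u^*}\colon g_\DD\to g_\CC$ for each $u\colon\CC\to\DD$, and a 2-cell $g_{\alpha^*}$ for each $\alpha\colon u\Rightarrow v$, with 2-functoriality imposed strictly. Then adjoin freely, for every $u\colon\CC\to\DD$, a 1-morphism $g_{u_!}\colon g_\CC\to g_\DD$ together with 2-cells $\eta_u\colon 1\Rightarrow g_{u^*}g_{u_!}$ and $\varepsilon_u\colon g_{u_!}g_{u^*}\Rightarrow 1$ subject to the triangle identities, so that $(g_{u_!},g_{u^*},\eta_u,\varepsilon_u)$ is an internal adjunction in $\Gk$. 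For every Beck--Chevalley square of \eqref{hdi4}, adjoin a 2-cell $\alpha^{bc}_{u,D}$ and a formal inverse, subject to the mate formula of Definition \ref{derivatordef}.4 and to the relations making these mutually inverse. Finally, adjoin auxiliary objects $g_\CC^{\mathbbm 2}$ and $g_\CC^I$ (with $I$ the walking isomorphism), which the cones in $\Pro$ will force, in every model, to be the cotensors of $\MM(g_\CC)$ with $\mathbbm 2$ and $I$. The resulting $\Gk$ is small since all generating data is indexed by small sets.

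The set $\Pro$ of cones then consists of three families: (a) for every finite coproduct $\CC_0\amalg\CC_1$ in $\mathfrak{Dia}$ (including the empty coproduct) a product cone with vertex $g_{\CC_0\amalg\CC_1}$ (resp.\ the terminal cone with vertex $g_\varnothing$); (b) for every $\CC\in\mathfrak{Dia}$ the cotensor cones identifying $g_\CC^{\mathbbm 2}$ and $g_\CC^I$ with the appropriate weighted 2-limits of $g_\CC$; (c) for every $\CC\in\mathfrak{Dia}$ the pullback cone with vertex $g_\CC^I$ encoding the homotopy-pullback condition
\[
g_\CC^I\;\simeq\; g_\CC^{\mathbbm 2}\times^h_{\prod_{D\in\CC}g_e^{\mathbbm 2}}\prod_{D\in\CC}g_e^I.
\]
All occurring weights are cofibrant in the projective model structure: products and cotensors have discrete or terminal indexing 2-categories where projective cofibrancy reduces to objectwise cofibrancy, and all objects of $\mathfrak{Cat}$ are cofibrant by hypothesis; the pullback weight is taken in its standard cofibrant form. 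Hence $\Sk$ is a weighted homotopy limit 2-sketch in the sense of Definition \ref{enrsk}.

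Define $\Psi\colon\mathfrak{hMod}_\Sk^{ps}\to\mathfrak{Der}^r$ by restriction along $\mathfrak{Dia}^{coop}\hookrightarrow\Gk$: $\Psi(\MM)(\CC)=\MM(g_\CC)$ and $\Psi(\MM)(u)=\MM(g_{u^*})$. Axiom by axiom: the product cones give axiom 1, the conservativity pullback cones give axiom 2, the internal adjunction data in $\Gk$ gives axiom 3, and the formal invertibility of $\alpha^{bc}$ gives axiom 4. A pseudo-natural transformation $\theta\colon\MM_1\Rightarrow\MM_2$ carries invertible structural 2-cells at every 1-morphism of $\Gk$, in particular at each $g_{u_!}$, making its restriction automatically \emph{cocontinuous} in the sense of Definition \ref{cc}; modifications restrict to modifications. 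Conversely, given a right derivator $\D$, choose $u_!$, $\eta_u$, $\varepsilon_u$ by axiom 3 and send the generators of $\Gk$ accordingly; the presentation of $\Gk$ by generators and relations, combined with axiom 4 and the cone-preservation granted by axioms 1--2, extends this uniquely to a strict 2-functor $\MM_\D$ which is a homotopy model. This gives 2-essential surjectivity. Local essential surjectivity of $\Psi$ on 1-cells follows because a cocontinuous $\theta\colon\D_1\Rightarrow\D_2$ assembles, via the structural isomorphisms $\beta^\theta_u$ and $\beta^\theta_{u_!}$ (invertible precisely by cocontinuity), into a pseudo-natural transformation $\MM_{\D_1}\Rightarrow\MM_{\D_2}$; local full faithfulness on 2-cells follows because a modification is determined by its components at the $g_\CC$ and condition \eqref{modi} is exactly the coherence at the generating 1-cells of $\Gk$.

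The main technical obstacle is the encoding of axiom 2 as a homotopy weighted limit cone. Joint conservativity of $(c_D^*)_{D\in\CC}$ is not a limit condition in the naive sense, so one must express it as a homotopy pullback involving the cotensors with $\mathbbm 2$ and the walking isomorphism $I$; this forces the auxiliary objects $g_\CC^{\mathbbm 2}$, $g_\CC^I$ into $\Gk$ and requires verifying that the pullback weight is cofibrant, which is the reason we work with homotopy rather than strict weighted limits. A secondary subtlety is that a right derivator is strictly 2-functorial on $\mathfrak{Dia}^{coop}$ but its left adjoints are only defined up to canonical isomorphism; the presentation of $\Gk$ must therefore impose no strict composition relation among the $g_{u_!}$, only invertible coherence 2-cells, so that $\MM_\D$ is well defined as a strict 2-functor independent of the particular choices of adjoints.
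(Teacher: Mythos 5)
Your proposal follows the same overall architecture as the paper's proof: take $\mathfrak{Dia}^{op}$ (the paper) or $\mathfrak{Dia}^{coop}$ (your indexing) as the core of $\Gk$, impose axiom 1 by product cones for the coproducts of $\mathfrak{Dia}$, impose axiom 3 by freely adjoined adjunctions $(u_{(!)},u,\eta,\epsilon)$ with the triangle identities, impose axiom 4 by freely adjoined inverses to the Beck--Chevalley composites, and then show that restriction along $\mathfrak{Dia}^{op}\rightarrow\Gk$ is a biequivalence. The genuine divergence is axiom 2. You encode conservativity by adjoining cotensor objects $(\CC)^{\mathbbm 2}$, $(\CC)^I$ together with cotensor cones and a homotopy pullback cone with vertex $(\CC)^I$; the paper instead characterizes conservativity by the bilimit square \eqref{consdia}, whose vertex is $\D(\CC)$ itself (lemmas \ref{second}, \ref{sad} and corollary \ref{finalform}), and packages it as the single weighted cone \eqref{secsec} on $\CC'$ with weight $\mathbbm 2$ at the cone point and $e$ elsewhere, cofibrantly replaced via \eqref{coheq}. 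The paper's trick keeps $\Gk$ free of any auxiliary objects for axiom 2; your route is essentially the alternative the paper discusses in the closing remark of \ref{resumesk}, buying a more familiar characterization of conservativity at the price of extra generators and cones.

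That price is where your write-up has loose ends. First, your cone (c) as displayed has legs through $\prod_{D\in\CC}g_e^{\mathbbm 2}$ and $\prod_{D\in\CC}g_e^I$, which are not objects of your $\Gk$ (for infinite $\CC$ they are not values of anything in $\mathfrak{Dia}$ either); you must either adjoin them with their own product cones or re-index the pullback as one wide weighted limit with a copy of $g_e^{\mathbbm 2}$ and $g_e^I$ for each $D\in\CC$. More importantly, the legs of the cotensor and pullback cones, and the comparison $1$-cells such as the morphism $g_\CC^{\mathbbm 2}\rightarrow g_e^{\mathbbm 2}$ that every model must send to $(c_D^\ast)^{\mathbbm 2}$, do not exist in $\Gk$ until you freely adjoin them and impose commutativity conditions tying them to the cotensor cone data; without this the pullback cone does not single out the canonical comparison functor and conservativity is not forced. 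This is exactly the bookkeeping the paper's construction is designed to avoid, so it is repairable but not optional. Second, ``automatically cocontinuous'' is too quick: cocontinuity in the sense of \ref{cc} requires the Beck--Chevalley mate of $\beta^\theta_u$ to be invertible, and what you actually have is invertibility of the structural cell $\beta^\theta_{u_{(!)}}$; you still need to identify the mate with that structural cell up to isomorphism (using that $u_{(!)}\dashv u$ is an adjunction internal to $\Gk$ and that $\MM_1,\MM_2$ are $2$-functors). This is the same verification the paper flags and omits in \ref{Upsilon}, so it is not a defect relative to the paper, but it is the real content behind the word ``automatically'' and should be acknowledged as such.
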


Since the proof is long, we split it into several parts.

\subsection{Idea of the proof}
\label{ideap}

The proof consists of two parts: the first, from subsection \ref{prederivators} to \ref{resumesk}, contains the construction of a homotopy limit 2-sketch $\Sk=(\Gk,\Pro)$, and the second, in subsection \ref{verification}, the verification that the 2-category $\mathfrak{hMod}_\Sk^{ps}$ is indeed $\mathfrak{Der}^r$.

The construction of $\Sk$ will be carried out as follows.
After providing a 2-sketch for prederivators $(\Gk,\Pro)$ in subsection \ref{prederivators}, we will proceed by steps 
capturing, in subsections \ref{first}, \ref{secondd}, \ref{third} and \ref{fourth}, each of the four axioms for derivators. 
More precisely, we will adjoin to $\Gk$, at each step, 
new elements and commutative diagrams, and we will enlarge $\Pro$ with new cones, in order to express by means of these the axioms for derivators; then, 
we will redefine $\Gk$ as the free 2-category on these data and on the commutativity conditions already in $\Gk$ (see remark \ref{yyu} below). 
Observe that cones in $\Pro$ are used to capture only axiom 1 and 2. 

\begin{remark}
\label{yyu}
\rm 
The free construction we use to adjoin new elements to $\Gk$ generalizes the analogous construction for ordinary categories (see \cite[5.1]{B1}), replacing ordinary graphs with 2-graphs. A 2-graph is a graph ``enriched'' over the category of small graphs, that is, it is given by a set of vertices and a family of ordinary graphs, one for every pair of vertices (see \cite[1.3.1]{Le} for the precise definition).
If $2\G r$ denotes the category of 2-graphs and morphisms of 2-graphs, and $2\CC at$ the category of 2-categories whose underlying 2-graph belongs to $2\G r$ and 2-functors, then the forgetful functor $2\CC at\rightarrow 2\G r$ is monadic (see \cite[D]{Le}).

When a 2-graph contains elements already composable or relations among them, we would like that the free 2-category constructed over it preserves such data.
As usual, the idea is to consider, in the given 2-graph, pairs formed by finite sequences of horizontally or vertically composable 2-cells in a prescribed order, sharing horizontal sources and targets, and to require that the components of each pair become equal in the free 2-category. Such pairs, called commutativity conditions, are defined rigorously by Power and Wells \cite[2.5]{PW}, in terms of labeled pasting schemes, called pasting diagrams in \cite{St1}. The proof that pasting 2-cells is well-defined in any 2-category is the subject of \cite{PP}, of which a brief survey is found in \cite[2]{PB}.
Denoting by $c2\G r$ the category whose objects are 2-graphs with a set of commutativity conditions and whose morphisms are morphisms of 2-graphs preserving commutativity conditions, a free construction, left adjoint to the forgetful functor $2\CC at\rightarrow c2\G r$, is provided in Street's paper \cite[5]{St1} in terms of ``presentations" of 2-categories.

When a 2-graph $\Gk$ is built from a 2-category $\Ck$ by adjoining new symbols, as in our case, we refer to all the relations among elements of $\Ck$ determined by the 2-category structure on $\Ck$ as the commutativity conditions defined by $\Ck$ . 
\end{remark}

The first step consists in providing a sketch for prederivators. 

\subsection{Prederivators}
\label{prederivators}

Let $\Gk=\mathfrak{Dia}^{op}$ and set $\Pro=\varnothing$.
A homotopy model with values in $\mathfrak{Cat}$ is a 2-functor $\D:\Gk\rightarrow\mathfrak{Cat}$ with domain $\mathfrak{Dia}^{op}$, in other words, a prederivator of domain $\mathfrak{Dia}$. Therefore $\Sk=(\Gk,\Pro)$ 
is a homotopy limit 2-sketch whose 2-category $\mathfrak{hMod}_\Sk^{ps}$ of homotopy models 
in $\mathfrak{Cat}$ 
is the 2-category $\mathfrak{PDer}$ of prederivators.\\

The next steps are concerned with including into the sketch the axioms for derivators.

\subsection{Axiom 1}
\label{first}

Let $\Gk=\mathfrak{Dia}^{op}$ and define $\Pro$ to be the family of cones in $\mathfrak{Dia}^{op}$ of the form
\begin{gather}
\begin{aligned}
\xymatrix{
 & \CC_0\amalg\CC_1 \ar[dl]_{s_{\CC_0}} \ar[dr]^{s_{\CC_1}} & \\
\CC_0 & & \CC_1,  
}
\end{aligned}
\label{diaprod2}
\end{gather}
corresponding to cocones for the coproducts $\CC_0\amalg\CC_1$ in $\mathfrak{Dia}$, for any pair of objects $\CC_0$ and $\CC_1$. Therefore, $s_{\CC_0}$ and $s_{\CC_1}$ are the arrows in $\mathfrak{Dia}^{op}$ corresponding to the canonical morphisms of the coproduct $\CC_0\amalg\CC_1$ taken in $\mathfrak{Dia}$. With the notation of definition \ref{enrsk} we can write these cones as
\begin{equation}
\label{prodcon}
(\{0,1\},F,\delta_e,\CC_0\amalg\CC_1,(s_{\CC_0},s_{\CC_1})),
\end{equation}
where $\{0,1\}$ is the discrete 2-category with two objects, 
$F:\{0,1\}\rightarrow\Gk$ is the 2-functor mapping $i$ to $\CC_i$, for $i=0,1$, $\delta_e:\{0,1\}\rightarrow\mathfrak{Cat}$ is the constant 2-functor at the terminal category $e$ (which is clearly cofibrant), $\CC_0\amalg\CC_1$ denotes the product of $\CC_0$ and $\CC_1$ in $\mathfrak{Dia}^{op}$ (the coproduct in $\mathfrak{Dia}$) and $s_{\CC_i}:\CC_0\amalg\CC_1\rightarrow\CC_i$ are the canonical projections. 

Since models take the product cones \eqref{prodcon} to product cones in $\mathfrak{Cat}$, they fulfill the first part of axiom 1. To capture completely axiom 1, we have to include into $\Pro$ the cone $\varnothing$ with vertex the empty category over the empty diagram, thus forcing $\D(\varnothing)\simeq e$
 
Observe that $\Pro$ is a set, as we have assumed that $\mathfrak{Dia}$ is small for the fixed universe $\U$.

\subsection{Axiom 2}
\label{secondd}
To capture axiom 2 we need first a reformulation of it in terms of limits. As an intermediate step, we recast it as follows. 

\begin{lemma}
\label{second}
A prederivator $\D$ satisfies axiom 2 if and only if, 
for any $\CC\in\mathfrak{Dia}$, the family of functors $\D(c_D):\D(\CC)\rightarrow\D(e)$ induced by the constant functors $c_D:e\rightarrow\CC$ at $D\in\CC$, is jointly conservative, that is, the induced functor 
\begin{equation*}
\D(\CC)\rightarrow\Pi_{D\in\CC}\D(e)
\end{equation*} 
is conservative.
\end{lemma}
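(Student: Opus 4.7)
The plan is to observe that the lemma is essentially a reformulation of axiom 2 into standard categorical language, with only one direction carrying actual content. First I would note that since $c_D^* = \D(c_D)$ is a functor, it preserves isomorphisms automatically; this supplies the implication ``$f$ an isomorphism in $\D(\CC)$ $\Rightarrow$ each $c_D^*(f)$ an isomorphism in $\D(e)$'' for free, without any hypothesis on $\D$. Hence the substantive content of axiom 2 is the converse: whenever $c_D^*(f)$ is invertible for every object $D \in \CC$, the morphism $f$ must itself be invertible in $\D(\CC)$. This is exactly the statement that the family $\{c_D^*\}_{D \in \CC}$ is jointly conservative.

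It then remains to translate joint conservativity of a family of functors into conservativity of a single functor into the product. A morphism in the product category $\prod_{D \in \CC} \D(e)$ is invertible if and only if each of its components is, since limits in $\mathfrak{Cat}$ are computed componentwise. Applying this to the morphism with $D$-th component $c_D^*(f)$, one sees that the canonical comparison functor $\D(\CC) \to \prod_{D \in \CC} \D(e)$ reflects isomorphisms exactly when the collection $\{c_D^*\}_{D \in \CC}$ is jointly conservative. Combining the two observations yields the chain of equivalences: axiom 2 $\Leftrightarrow$ joint conservativity of $\{c_D^*\}_D$ $\Leftrightarrow$ conservativity of the induced functor into the product.

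I do not anticipate any real obstacle, as the statement is a translation rather than a substantive theorem; its role is to recast axiom 2 into a form that can subsequently be encoded as a limit condition on a sketch cone at the next step of the construction of $\Sk$. If anything, the only mild point to check is that the product $\prod_{D \in \CC} \D(e)$ makes sense within the set-theoretic setup: this is guaranteed by the standing assumption that every $\CC \in \mathfrak{Dia}$ is $\U$-small, so the indexing set is small and the product exists in $\mathfrak{Cat}$.
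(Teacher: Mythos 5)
Your argument is correct and matches the paper's treatment: the paper states this lemma without proof, regarding it as a direct translation of axiom 2 into the language of (jointly) conservative functors, which is exactly the chain of equivalences you spell out (functors preserve isomorphisms, and isomorphisms in $\Pi_{D\in\CC}\D(e)$ are detected componentwise). Your remark on smallness of $\CC$ guaranteeing the product exists is consistent with the paper's standing assumption that $\mathfrak{Dia}$ is $\U$-small.
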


Conservative functors can be described as follows. Consider a functor $f:A\rightarrow B$. Denote by $A^\mathbbm{2}$ and $B^\mathbbm{2}$ the categories of arrows of $A$ and $B$ respectively, seen as categories of functors, where $\mathbbm{2}=\Delta^1$ is the category corresponding to the ordered set $\{0<1\}$. Let $c_A:A\rightarrow A^\mathbbm{2}$ and $c_B:B\rightarrow B^\mathbbm{2}$ denote the canonical inclusions. Let $f^{\mathbbm 2}:A^\mathbbm{2}\rightarrow B^\mathbbm{2}$ be the functor induced by $f$ via composition. With these data, consider the diagram
\begin{gather}
\begin{aligned}
\xymatrix{
A \ar[r] \ar[d]_{f} \ar[r]^{c_A} & A^{\mathbbm 2} \ar[d]^{f^{\mathbbm 2}} \\
B \ar[r]_{c_B} & B^{\mathbbm 2}
}
\end{aligned}
\label{consdia}
\end{gather}
in the 2-category $\mathfrak{Cat}$.

\begin{lemma}
\label{sad}
A functor $f:A\rightarrow B$ is conservative if and only if the commutative diagram \ref{consdia} 
is a bilimit 
in $\mathfrak{Cat}$.
\end{lemma}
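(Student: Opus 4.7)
The plan is to describe the bipullback $P$ of the cospan $B \xrightarrow{c_B} B^{\mathbbm{2}} \xleftarrow{f^{\mathbbm{2}}} A^{\mathbbm{2}}$ explicitly as an iso-comma object, then check that the canonical comparison $\Phi\colon A \to P$, $a \mapsto (f(a),\mathrm{id}_a,\mathrm{id})$, is an equivalence iff $f$ reflects isomorphisms. Concretely, an object of $P$ is a triple $(b,\alpha,\iota)$ where $b\in B$, $\alpha\colon a_0\to a_1$ lies in $A^{\mathbbm{2}}$ and $\iota\colon c_B(b)\xrightarrow{\sim} f^{\mathbbm{2}}(\alpha)$ is an iso in $B^{\mathbbm{2}}$, which unpacks to a pair of isomorphisms $\beta_0\colon b\xrightarrow{\sim} f(a_0)$, $\beta_1\colon b\xrightarrow{\sim} f(a_1)$ satisfying $f(\alpha)\beta_0=\beta_1$; a morphism $(b,\alpha,\iota)\to(b',\alpha',\iota')$ is a pair $(u,(k_0,k_1))$ with the obvious compatibility $f^{\mathbbm{2}}(k_0,k_1)\circ\iota=\iota'\circ c_B(u)$.

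For full faithfulness of $\Phi$ one observes that a morphism in $P$ out of $(f(a),\mathrm{id}_a,\mathrm{id})$ to $(f(a'),\mathrm{id}_{a'},\mathrm{id})$ forces $k_0=k_1=:k$ and $u=f(k)$, so $P(\Phi a,\Phi a')\cong A(a,a')$; this does not use any hypothesis on $f$. All the content sits in essential surjectivity, which I claim is exactly the conservativity of $f$.

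For the direction ($\Leftarrow$), given $\alpha\colon a_0\to a_1$ with $f(\alpha)$ invertible, the object $(f(a_0),\alpha,(\mathrm{id}_{f(a_0)},f(\alpha)))$ lies in $P$; essential surjectivity of $\Phi$ supplies an object $a\in A$ and an iso in $P$ from $(f(a),\mathrm{id}_a,\mathrm{id})$ to it, whose $A^{\mathbbm{2}}$-component is an iso $(k_0,k_1)\colon\mathrm{id}_a\xrightarrow{\sim}\alpha$ in $A^{\mathbbm{2}}$, so both $k_0$ and $k_1$ are invertible in $A$ and $\alpha=k_1 k_0^{-1}$. For ($\Rightarrow$), given any $(b,\alpha,(\beta_0,\beta_1))\in P$ the equality $f(\alpha)=\beta_1\beta_0^{-1}$ shows $f(\alpha)$ is invertible, so by conservativity $\alpha$ itself is invertible in $A$; then $(\beta_0^{-1},(\mathrm{id}_{a_0},\alpha))$ is a well-defined iso in $P$ from $\Phi(a_0)$ to $(b,\alpha,(\beta_0,\beta_1))$, witnessing essential surjectivity.

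The main thing to be careful about is bookkeeping around the coherence isomorphism $\iota$: one has to verify that the candidate morphisms produced in each direction really do satisfy the compatibility condition $f^{\mathbbm{2}}(k_0,k_1)\circ\iota=\iota'\circ c_B(u)$. Once this is in hand the rest is routine. Note also that the weight for the bipullback is flexible/cofibrant, so the bilimit coincides with the homotopy weighted limit of Definition \ref{hwl} and this characterization will fit into the sketch constructed in the next subsections.
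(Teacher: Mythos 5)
Your proof is correct and takes essentially the same route as the paper's: both describe the bipullback of $c_B$ and $f^{\mathbbm 2}$ explicitly (you via the iso-comma presentation, the paper via a pseudo-pullback of quintuples) and show that the canonical comparison from $A$ is an equivalence precisely when $f$ is conservative. Your split into full faithfulness (automatic) plus essential surjectivity (equivalent to conservativity) in fact supplies the verifications that the paper states but omits.
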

We recall the notion of bilimit: if $F:\E\rightarrow\Gk$ and $G:\E\rightarrow\mathfrak{Cat}$ are 2-functors, where $\E$ is a 
small 2-category, the bilimit of $F$ weighted by $G$ exists when there is an object $\{G,F\}_b\in\Gk$ and for every object $\DD$ in $\Gk$ an equivalence in $\cal{C}at$
\begin{equation*}
\label{lhs}
\Gk(\DD,\{G,F\}_b)\simeq\Pro s(\E,\mathfrak{Cat})(G,\Gk(\DD,F-))
\end{equation*}
natural in $\DD$. 

Notice, however, that by the isomorphism \eqref{coheq}, any bilimit $\{G,F\}_b$ is equivalent to the weighted homotopy limit $\{\Qc G,F\}_h$, where $\Qc G$ is a cofibrant replacement of $G$, so that a bilimit is a special case of weighted homotopy limit (definition \ref{hwl}). 
\begin{proof}
The proof of lemma \ref{sad} is lengthy nevertheless straightforward, so we just outline the idea.

Suppose $f$ is conservative. Observe first that a pseudo-pullback is indeed a bilimit (see \cite[6.12]{La}) and recall its explicit expression (see \cite[7.6.3]{B1}): in our case, it is the category whose objects are quintuples $(b,w,h,v,g)$ with $b\in B$, $h\in B^{\mathbbm 2}$, $g\in A^{\mathbbm 2}$, $w:c_B(b)\cong h$, $v:f^{\mathbbm 2}(g)\cong h$, and whose morphisms are triples 
\begin{displaymath}
(x,y,z):(b,w,h,v,g)\Rightarrow(b',w',h',v',g')
\end{displaymath} 
with $x:b\rightarrow b'$, $y:h\Rightarrow h'$ and $z:g\Rightarrow g'$, such that 
\begin{align*}
\label{morpspb}
\begin{split}
 & y\circ w=w'\circ c_B(x) \\
 & y\circ v=v'\circ f^{\mathbbm 2}(z). 
\end{split}
\end{align*} 
Denoting by $B\times^{ps}_{B^{\mathbbm 2}}A^{\mathbbm 2}$ the pseudo-pullback of the diagram in figure \eqref{consdia},
we have an inclusion of $r:A\rightarrow B\times^{ps}_{B^{\mathbbm 2}}A^{\mathbbm 2}$ constructed by means of $f$.
We then define a functor $u:B\times^{ps}_{B^{\mathbbm 2}}A^{\mathbbm 2}\rightarrow A$ as follows: on objects $(b,w,h,v,g)$ in $B\times^{ps}_{B^{\mathbbm 2}}A^{\mathbbm 2}$ we set 
\begin{equation*}
\label{uobj}
u((b,w,h,v,g))=g(0),
\end{equation*}
where $0\in\mathbbm{2}$; on morphisms $(x,y,z):(b,w,h,v,g)\rightarrow(b',w',h',v',g')$ we define 
\begin{equation*}
\label{umor}
u((x,y,z))=z_0,
\end{equation*}
where $z_0$ denotes the natural transformation $z$ computed at $0\in{\mathbbm 2}$.
Clearly $ur=1_A$. That $ru\cong1_{B\times^{ps}_{B^{\mathbbm 2}}A^{\mathbbm 2}}$, and so that the pair $r:A\rightleftarrows B\times^{ps}_{B^{\mathbbm 2}}A^{\mathbbm 2}:u$ is an equivalence and so $A$ a bilimit, 
follows from the hypothesis that $f$ is conservative. We omit however this part.

Concerning the converse, observe first that if \ref{consdia} is a bilimit 
then $(r,u)$ defined above yields an equivalence $A\simeq B\times^{ps}_{B^{\mathbbm 2}}A^{\mathbbm 2}$. Now, if $n:a\rightarrow a'$ is a morphism in $A$ then it defines an object in $A^{\mathbbm 2}$, and, if, in addition, $f(n)$ is also an isomorphism, then it can be extended to an object of $B\times^{ps}_{B^{\mathbbm 2}}A^{\mathbbm 2}$. This finally implies that $n$ is an isomorphism. Again, we omit the details. 
\end{proof}

Lemma \ref{second} and \ref{sad} provide a formulation of axiom 2 in terms of limits.

\begin{lemma}
\label{second2}
The functor $
\D(\CC)\rightarrow\Pi_{D\in\CC}\D(e)$ is conservative if and only if the diagram
\begin{gather}
\begin{aligned}
\xymatrix{
\D(\CC) \ar[rr]
\ar[d]
 & & \D(\CC)^{\mathbbm 2} \ar[d]
 \\
\Pi_{D\in\CC}\D(e) \ar[rr]
 & & (\Pi_{D\in\CC}\D(e))^{\mathbbm 2}
}
\end{aligned}
\label{bipulldia}
\end{gather}
is a bilimit, where arrows are as in diagram \ref{consdia}. 
\end{lemma}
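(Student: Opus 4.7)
The proof is essentially an immediate specialization of Lemma \ref{sad} already proved, so the plan is simply to identify the right functor and invoke that lemma.

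Set $A = \D(\CC)$ and $B = \prod_{D\in\CC}\D(e)$, and let $f\colon A \to B$ be the functor induced by the family of evaluation functors $\D(c_D)\colon \D(\CC) \to \D(e)$ for $D\in\CC$; this is the canonical comparison appearing in Lemma \ref{second}. Unwinding the notation, the diagram \eqref{bipulldia} in the statement is precisely the instance of the diagram \eqref{consdia} obtained by taking this $f$: its top-left corner is $A$, its bottom-left corner is $B$, and its right column records the arrow categories $A^{\mathbbm 2}$ and $B^{\mathbbm 2}$ together with the induced functor $f^{\mathbbm 2}$.

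Given this identification, I would simply cite Lemma \ref{sad}: a functor is conservative if and only if the associated square \eqref{consdia} is a bilimit in $\mathfrak{Cat}$. Applying this equivalence to our particular $f$ yields the lemma. No further computation is needed; in particular, we do not have to revisit the pseudo-pullback description of the bilimit, the explicit construction of the equivalence $A \simeq B\times^{ps}_{B^{\mathbbm 2}}A^{\mathbbm 2}$, or the argument showing that conservativity of $f$ implies (and is implied by) this equivalence, since all of this was carried out in the proof of Lemma \ref{sad}.

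The only point that requires a small remark, and which I would include for clarity, is the naturality of the identification: the vertical arrow on the left of \eqref{bipulldia} is indeed the functor $f$ produced by the universal property of the product, and the vertical arrow on the right is its lift $f^{\mathbbm 2}$ to arrow categories, so the square matches \eqref{consdia} on the nose and not merely up to isomorphism. Thus there is no obstacle to the direct application of Lemma \ref{sad}, and the statement follows.
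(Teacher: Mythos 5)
Your proof is correct and follows exactly the paper's route: the paper gives no separate argument for this lemma, treating it precisely as the specialization of Lemma \ref{sad} to the comparison functor $f\colon \D(\CC)\rightarrow\Pi_{D\in\CC}\D(e)$ from Lemma \ref{second}, which is what you do. The remark on matching the square with \eqref{consdia} on the nose is a harmless clarification and nothing is missing.
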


Now, as explained in \ref{ideap}, we have to add to $\Pro$ cones, one for each $\CC\in\mathfrak{Dia}$, which models will then map to the bilimit 
\eqref{bipulldia}, thus forcing them to fulfill axiom 2; the weights defining such cones will have to be cofibrant. 
We proceed as follows. 

For every $\CC\in\mathfrak{Dia}^{op}$, let $\CC'$ denote the category obtained by adjoining an initial object to the discrete category on the objects of $\CC$: in other words, $\CC'$ is the category whose objects are all those of $\CC$ together with a new one $\ast$ acting as initial object, and whose non-trivial morphisms are just the canonical ones with source the initial object $\ast$. 

Given a derivator $\D$, consider the following functors: a diagram 
\begin{equation*}
F_\CC:\CC'\rightarrow\mathfrak{Cat}, 
\end{equation*}
which, on objects, maps $\ast$ to $\D(\CC)$ and the remaining objects to $\D(e)$, and, on morphisms, sends the morphism $\ast\rightarrow C$, for every object $C$ of $\CC$, to the morphism $\D(\CC)\rightarrow\D(e)$, obtained by applying $\D$ to the functor $c_C:e\rightarrow\CC$ in $\mathfrak{Dia}$ constant at $C$ in $\CC$; a weight 
\begin{equation*}
G_\CC:\CC'\rightarrow\mathfrak{Cat}, 
\end{equation*}
which, on objects, maps each $C$ of $\CC$ to $e$ and $\ast$ to ${\mathbbm 2}$, and, on morphisms, takes each $\ast\rightarrow C$ to the canonical morphism ${\mathbbm 2}\rightarrow e$.

We claim that $\{G_\CC,F_\CC\}$ is the bilimit \eqref{bipulldia}. This will imply the following form of axiom 2.

\begin{corollary}
\label{finalform}
The functor $
\D(\CC)\rightarrow\Pi_{D\in\CC}\D(e)$ is conservative if and only if $\D(\CC)\cong\{G_\CC,F_\CC\}$.
\end{corollary}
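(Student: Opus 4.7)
The plan is to combine the preceding lemmas with an explicit identification of the weighted bilimit $\{G_\CC,F_\CC\}$ with the bilimit appearing in \eqref{bipulldia}. Once this identification is granted, the chain is: lemma \ref{second} reformulates axiom 2 as the conservativity of $\D(\CC)\to\prod_{D\in\CC}\D(e)$; lemma \ref{second2}, via lemma \ref{sad}, recasts this conservativity as the bilimit property of \eqref{bipulldia}; and the identification then rewrites this property as the stated equivalence $\D(\CC)\cong\{G_\CC,F_\CC\}$.

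The substantive step is therefore to establish
\begin{equation*}
\{G_\CC,F_\CC\}\simeq B\times^{ps}_{B^{\mathbbm 2}}\D(\CC)^{\mathbbm 2},
\end{equation*}
where $B=\prod_{D\in\CC}\D(e)$ and the pseudo-pullback is taken over the cospan at the bottom-right of \eqref{bipulldia}. I would prove this by comparing universal properties with respect to a test object $X\in\mathfrak{Cat}$. Using that $\CC'$ has $\ast$ as initial object with non-identity morphisms only $\ast\to C$ for $C\in\CC$, a pseudo-natural transformation $G_\CC\Rightarrow\mathfrak{Cat}(X,F_\CC-)$ unpacks into: a natural transformation $\alpha_\ast:f_0\Rightarrow f_1$ between functors $X\to\D(\CC)$; functors $g_C:X\to\D(e)$ for each $C\in\CC$; and, at each morphism $\ast\to C$, an invertible 2-cell that amounts to natural isomorphisms $\D(c_C)\circ f_i\cong g_C$ for $i=0,1$ compatible with $\D(c_C)\circ\alpha_\ast$.

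Reassembling, $(g_C)_{C\in\CC}$ is a functor $X\to B$; $\alpha_\ast$ is a functor $X\to\D(\CC)^{\mathbbm 2}$; and the coherence data is precisely an invertible 2-cell in $[X,B^{\mathbbm 2}]$ between $(\prod_D\D(c_D))^{\mathbbm 2}\circ\alpha_\ast$ and the constant arrow on $(g_C)_C$. This is exactly the data classifying a functor $X\to B\times^{ps}_{B^{\mathbbm 2}}\D(\CC)^{\mathbbm 2}$. Since the correspondence is 2-natural in $X$, it yields the claimed equivalence.

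I expect the main subtlety to be the coherent assembly of the per-object isomorphism data across all $C\in\CC$ into the single invertible 2-cell over $B^{\mathbbm 2}$, using the universal property of the product; cofibrancy of $G_\CC$ (needed to ensure $\{G_\CC,F_\CC\}$ is a bona fide weighted homotopy limit, in agreement with definition \ref{hwl}) can be checked directly, since $G_\CC$ is built from cofibrant representables via coproducts and pushouts along $\mathbbm 2\to e$. Assembling the equivalence with lemmas \ref{second}, \ref{sad} and \ref{second2} yields the corollary.
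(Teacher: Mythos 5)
Your main argument is essentially the paper's: both proofs unpack the cone data for the weight $G_\CC$ and diagram $F_\CC$ over $\CC'$ --- a functor to $\D(\CC)^{\mathbbm 2}$, functors to $\D(e)$ for each $C\in\CC$, and compatibility over each arrow $\ast\rightarrow C$ --- and recognize this as exactly the cone data for the square \eqref{bipulldia}, after which lemmas \ref{second}, \ref{sad} and \ref{second2} give the corollary. The paper does the unpacking directly at the vertex with strict 2-natural transformations, while you test against an arbitrary $X$ with pseudo-natural transformations (the bilimit reading, consistent with the definition given after lemma \ref{sad}); that difference is cosmetic, and your reassembly of the per-$C$ isomorphisms into a single invertible 2-cell over $B^{\mathbbm 2}$ is correct.

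There is, however, one concrete error: the parenthetical claim that cofibrancy of $G_\CC$ ``can be checked directly, since $G_\CC$ is built from cofibrant representables via coproducts and pushouts along $\mathbbm 2\rightarrow e$.'' The functor $\mathbbm 2\rightarrow e$ is not a cofibration in the canonical model structure on $\CC at$ (it is not injective on objects), so pushouts along it do not preserve cofibrancy, and in fact $G_\CC$ is not projectively cofibrant: already for $\CC=e$, where $\CC'\cong\mathbbm 2$, a cofibrant object of $[\mathbbm 2,\mathfrak{Cat}]$ with the projective structure is an injective-on-objects functor, which $G_\CC(\ast\rightarrow C)=\mathbbm 2\rightarrow e$ is not. (Equivalently, the strict limit $\{G_\CC,F_\CC\}$ picks out arrows sent to identities, whereas the bilimit picks out arrows sent to isomorphisms, and these genuinely differ in general, so $G_\CC$ cannot be flexible.) This is precisely why the paper, right after the corollary, replaces the cone by the one weighted by a cofibrant replacement $\Qc G_\CC$ via the isomorphism \eqref{coheq} before adding it to $\Pro$. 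Your identification of $\{G_\CC,F_\CC\}$ with the pseudo-pullback is unaffected, since it is phrased with pseudo-natural transformations and hence computes the bilimit without any cofibrancy hypothesis; but the direct-cofibrancy claim should be dropped in favour of the cofibrant-replacement step if you want the cone to qualify as part of a weighted homotopy limit 2-sketch in the sense of definition \ref{hwl}.
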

\begin{proof}
The claim follows from the observation that a natural transformation $G_\CC\Rightarrow\mathfrak{Cat}(\{G_\CC,F_\CC\},F_\CC-)$ consists of: 
\begin{enumerate}
\item a functor $G_\CC(\ast)\rightarrow\mathfrak{Cat}(\{G_\CC,F_\CC\},F_\CC(\ast))$, that is, 
a functor $\{G_\CC,F_\CC\}\rightarrow\D(\CC)^{\mathbbm 2}$; 
\item a functor $G_\CC(C)\rightarrow\mathfrak{Cat}(\{G_\CC,F_\CC\},F_\CC(C))$ for every object $C$ of $\CC$, that is, 
a functor $\{G_\CC,F_\CC\}\rightarrow\D(e)$; 
\item for every arrow $\ast\rightarrow C$ in $\CC'$, with $C\in\CC$, a commutative diagram imposing that each composition 
\begin{equation*}
\{G_\CC,F_\CC\}\rightarrow\D(e)\rightarrow\D(e)^{\mathbbm 2}, 
\end{equation*}
of the functor in (2) with that induced by ${\mathbbm 2}\rightarrow e$, agrees with the composition 
\begin{equation*}
\{G_\CC,F_\CC\}\rightarrow\D(\CC)^{\mathbbm 2}\rightarrow\D(e)^{\mathbbm 2}, 
\end{equation*}
of the functors in (1) with those induced by $c_C:e\rightarrow\CC$;
of such diagram we display below the part defined by $C\in\CC$:
\begin{displaymath}
\xymatrix{
\{G_\CC,F_\CC\} \ar[r] \ar[d] & \D(\CC)^{\mathbbm 2} \ar[d] \\
\D(e) \ar[r] & \D(e)^{\mathbbm 2}.
}
\end{displaymath}
\end{enumerate} 
\end{proof}

In view of corollary \ref{finalform} we have to impose that the bilimit of diagram \eqref{bipulldia}), computed by $\{G_\CC,F_\CC\}$, is $\D(\CC)$. 
To this purpose we consider, for every $\CC\in\mathfrak{Dia^{op}}$, the cone
\begin{equation}
\label{secsec}
(\CC',F'_\CC,G_\CC,\CC,\gamma),
\end{equation}
where $\CC'$ and $G_\CC$ have been defined above; $F'_\CC:\CC'\rightarrow\Gk$ is the functor which, in a way analogous to what $F_\CC$ does, maps $\ast$ to $\CC$ and the remaining objects to $e$, and sends the unique morphism $\ast\rightarrow C$, for every object $C$ of $\CC$, to the morphism in $\Gk$ corresponding to the functor $c_C:e\rightarrow\CC$ in $\mathfrak{Dia}$ constant at $C$ in $\CC$; 
and $\gamma$ is a 2-natural transformation $G_\CC\Rightarrow\Gk(\CC,F'_\CC-)$ determined by two identity arrows $\CC\rightarrow\CC$ with the identity 2-morphism between them, and, for each $C\in\CC$, by the arrow $c_C:\CC\rightarrow e$, where the naturality is expressed by the commutativity of the following  
diagram, of which we display below the part corresponding to $C\in\CC$, 
\begin{displaymath}
\xymatrix{
\CC \ar[d]_{c_C} 
\rrtwocell^{1_C}_{1_C}{1} & & \CC \ar[dll]^{c_C} \\
e & &
}
\end{displaymath}

Finally, we replace this pseudo-cone by the cone defined by the 2-natural transformation $\gamma'$ corresponding to $\gamma$ via the isomorphism \eqref{coheq}  
which comes after taking a cofibrant replacement $\Qc G$ of $G$. We add to $\Pro$ all such cones, for every $C\in\mathfrak{Dia}$.

\subsection{Axiom 3}
\label{third}

If we are constructing a sketch for $\mathfrak{Der}^{r}$, to capture axiom 3
we freely adjoin to $\Gk$ a 1-morphism $u_{(!)}:\CC\rightarrow\DD$ and 2-morphisms $\epsilon_{(u_!)}:u_{(!)}u\Rightarrow1_\CC$, $\eta_{(u_!)}:1_\DD\Rightarrow uu_{(!)}$, for any 1-morphism $u:\DD\rightarrow\CC$ in $\mathfrak{Dia}^{op}$ which has not already a left adjoint. We impose the following diagrams in $\Gk$:
\begin{equation}
\begin{aligned}
\label{relterzo}
 & (u\ast\epsilon_{(u_!)})\circ(\eta_{(u_!)}\ast u)=1_u \\
 & (\epsilon_{(u_!)}\ast u_{(!)})\circ(u_{(!)}\ast\eta_{(u_!)})=1_{u_{(!)}} 
\end{aligned}
\end{equation}
These will ensure the existence of a left adjoint to $\D(u)$, for any model $\D$. 

We remark that if we are instead interested in a sketch for $\mathfrak{Dia}^l$ then we should adjoin, for any  $u:\DD\rightarrow\CC$ in $\mathfrak{Dia}^{op}$ not having a right adjoint, a 1-morphism $u_{(\ast)}:\CC\rightarrow\DD$ and 2-morphisms 
$\epsilon_{(u_\ast)}:uu_{(\ast)}\Rightarrow1_\CC$, $\eta_{(u_\ast)}:1_\DD\Rightarrow u_{(\ast)}u$, together with diagrams
\begin{align*}
 & (u_{(u_\ast)}\ast\epsilon_{(\ast)})\circ(\eta_{(u_\ast)}\ast u_{(\ast)})=1_{u_{(\ast)}} \\
 & (\epsilon_{(u_\ast)}\ast u)\circ(u\ast\eta_{(\ast)})=1_u. 
\end{align*}

If we are constructing a sketch for $\mathfrak{Der}^{rl}$ then all the 1-morphisms, 2-morphisms and relations above should be added.

\subsection{Axiom 4}
\label{fourth}

To capture axiom 4 in the sketch for $\mathfrak{Der}^r$,
for any diagram in $\mathfrak{Dia}^{op}$ of the form
\begin{displaymath}
\xymatrix{
D\backslash\CC \drtwocell<\omit>{\alpha} & \CC \ar[l]_f \\
e \ar[u]^t & \DD \ar[l]^d \ar[u]_u 
}
\end{displaymath}
(see axiom 4 in definition \ref{derivatordef} for the meaning of the symbols),
we add a 2-morphism $\alpha_{bc}^{-1}:du_{(!)}\Rightarrow t_{(!)}f$ and impose the commutativity conditions
\begin{equation}
\begin{aligned}
\label{relquarto}
[(\epsilon_{(t_!)}\ast d\ast u_{(!)})\circ(t_{(!)}\ast\alpha\ast u_{(u_!)})\circ(t_{(!)}\ast f\ast\eta_{(u_!)})]\circ\alpha_{bc}^{-1}=1_{du_{(!)}} \\
\alpha_{bc}^{-1}\circ[(\epsilon_{(t_!)}\ast d\ast u_{(!)})\circ(t_{(!)}\ast\alpha\ast u_{(u_!)})\circ(t_{(!)}\ast f\ast\eta_{(u_!)})]=1_{t_{(!)}f},
\end{aligned}
\end{equation}
provided such a morphism is not already in $\Gk$.

If concerned with $\mathfrak{Der}^l$ or $\mathfrak{Der}^{rl}$, we proceed by adapting what done above to the new situation in the obvious way.

\subsection{Summary}
\label{resumesk}
We summarize the construction of the sketch $\Sk=(\Gk,\Pro)$ for $\mathfrak{Der}^r$.

\subsubsection{Cones}

The set $\Pro$ contains the following cones:
\begin{enumerate}
\item $(\{0,1\},F,\delta_e,\CC_0\amalg\CC_1,(s_{\CC_0},s_{\CC_1}))$, for any objects $\CC_0$ and $\CC_1$ of $\mathfrak{Dia}$ (see \ref{first});
\item $\varnothing$ the empty cone (see \ref{first});
\item $(\CC',F'_\CC,G_\CC,\CC,\gamma')$, for every object $\CC\in\mathfrak{Dia}$ (see \ref{secsec}).
\end{enumerate}

\subsubsection{$\Gk$}

The 2-category $\Gk$ is the free 2-category on $\mathfrak{Dia}^{op}$ with new symbols and with commutativity conditions adjoined. It is made of the following elements:
\begin{enumerate}
\item elements of $\mathfrak{Dia}^{op}$;
\item 1-morphism $u_{(!)}$ and 2-morphisms $\epsilon_{(u_!)}$, $\eta_{(u_!)}$, for every 1-morphism $u\in\mathfrak{Dia}^{op}$ without a left adjoint \ref{third};
\item 2-morphism $\alpha_{bc}^{-1}$, for any 2-morphism $\alpha\in\mathfrak{Dia}^{op}$ as in \ref{fourth};
\item elements obtained as a result of the free construction over the previous elements and the commutativity conditions.
\end{enumerate}

We omit a summary for the  sketches for $\mathfrak{Der}^l$ and $\mathfrak{Der}^{rl}$, which can be obtained from the sketch for $\mathfrak{Der}^r$ by making the proper substitutions or additions, as outlined in \ref{third} and \ref{fourth}.

\begin{remark}
\rm
Observe that conservativity can be expressed not only in terms of the bilimit 
\ref{consdia}, but also by means of the following strict pullback
\begin{gather}
\begin{aligned}
\xymatrix{
A^I \ar[r] \ar[d]_{f^I} \ar[r]^{b_A} & A^{\mathbbm 2} \ar[d]^{f^{\mathbbm 2}} \\
B^I \ar[r]_{b_B} & B^{\mathbbm 2}
}
\end{aligned}
\end{gather}
Since $b_B$ is an isofibration, the pullback above is a homotopy pullback. 

If, in order to capture axiom 2, we construct a sketch with cones for each diagram \ref{consdia}, we will have to introduce a new symbol for $A^I$ and a cone to impose what this symbol should be. However, 
the resulting sketch will be an ordinary 2-sketch, and, since weights are cofibrant, also a homotopy limit 2-sketch. 

If considered as an ordinary 2-sketch, to prove biequivalence between models and derivators, since models preserves products strictly while derivators transform coproducts into products up to equivalence, some rigidification will be necessary. This last problem can be faced also by expressing axiom 1 by means of a suitable strict cone, for every $\CC_0$ and $\CC_1$ in $\mathfrak{Dia}$, and by adjoining an arrow which act as an equivalence between the vertex of such cone and $\CC_0\amalg\CC_1$. 
We could then try to recover 1-morphisms of derivators by restricting to cofibrant models, however, it is not then evident why a cofibrant replacement of a derivator may be identified with some model. Moreover, since the definition of small presentability is up to equivalence, we have preferred a homotopy limit 2-sketch in place of this approach. 

\end{remark}

\subsection{Biequivalence between models and derivators}
\label{verification}

In this subsection we prove that the 2-category $\mathfrak{Der}^r$ is 
biequivalent to the 2-category $\mathfrak{hMod}^{ps}_\Gk$ of models of the homotopy limit 2-sketch $\Gk$. 
If concerned with $\mathfrak{Der}^l$ or $\mathfrak{Der}^{rl}$, the proof is analogous. 

We will exhibit a 2-functor
\begin{equation*} 
\Upsilon:\mathfrak{hMod}^{ps}_\Sk\longrightarrow \mathfrak{Der}^r,
\end{equation*}
and we will outline why $\Upsilon$ is surjective on objects, 
full and faithful on both 1-morphisms and 
2-morphisms, however, omitting those lenghty verifications which looks nevertheless sufficiently clear for the way the sketch $\Sk$ has been constructed. 

\subsubsection{The 2-functor $\Upsilon$}
\label{Upsilon}

Every model $\MM$, via the inclusion $\mathfrak{Dia}^{op}\rightarrow\Gk$, yields a derivator $\Upsilon(\MM)$.

Given any 1-morphism of models
\begin{displaymath} 
\theta=((\theta_\X)_{\X\in\Gk},(\beta^\theta_u)_{u:\X\rightarrow\Y\in\Gk}):\MM_1\rightarrow\MM_2,
\end{displaymath}
consider 
\begin{displaymath} 
\Upsilon(\theta)=((\Upsilon(\theta)_\CC)_{\CC\in\mathfrak{Dia}^{op}},(\beta^{\Upsilon(\theta)}_u)_{u:\CC\rightarrow\DD\in\mathfrak{Dia}^{op}}):\Upsilon(\MM_1)\rightarrow\Upsilon(\MM_2)
\end{displaymath}
where
\begin{displaymath}
\Upsilon(\theta)_\CC=\theta_\CC
\end{displaymath}
for any $\CC\in\mathfrak{Dia}^{op}$, and
\begin{displaymath} 
\beta^{\Upsilon(\theta)}_u=\beta^\theta_u
\end{displaymath}
for any $u\in\mathfrak{Dia}^{op}$. These data do define a morphism of derivators $\Upsilon(\theta)$: 
what is left to prove is that $\Upsilon(\theta)$ is cocontinuous, in other words, that, for any $u\in\mathfrak{Dia}^{op}$, the Beck-Chevalley transform $\beta^{'\theta}_{u_{(!)}}$ of $\beta^\theta_{u}$ is an isomorphism; this can be proved directly by showing that $\beta^{'\theta}_{u_{(!)}}$ coincides with $\beta^{\theta}_{u_{(!)}}$ up to isomorphism, however, we omit the lengthy verification.

Concerning $\Upsilon$ on 2-morphisms,
a modification $\lambda:\theta_1\Rrightarrow\theta_2$ in $\mathfrak{hMod}^{ps}_\Sk$ does define a modification $\Upsilon(\lambda):\Upsilon(\theta_1)\Rrightarrow\Upsilon(\theta_2)$ in $\mathfrak{Der}^r$, by setting for every $\CC\in\mathfrak{Dia}$
\begin{displaymath}
\Upsilon(\lambda)_\CC=\lambda_\CC.
\end{displaymath}

It is now straightforward to check that $\Upsilon$ preserves strictly all compositions and identities, and so it is a 2-functor.

\subsubsection{$\Upsilon$ is surjective on objects}
\label{omega0}

Any derivator $\D$ can be extended along the canonical functor $\mathfrak{Dia}^{op}\rightarrow\Gk$ to a model $\Omega(\D):\Gk\rightarrow\mathfrak{Cat}$ such that $\Upsilon(\Omega(\D))=\D$. 
Indeed, it is enough to assign $\Omega(\D)$ on the symbols adjoined to $\mathfrak{Dia}^{op}$: 
by construction of the sketch $\Sk$, 
this assignment is determined by $\D$ itself; 
for example, 
$\Omega(\D)$ must bring $u_{(!)}$ to a left adjoint $u_!$ to $u^\ast=\D(u)=\Omega(\D)(u)$. 

From this we see that two models determining the same derivators are isomorphic.
\subsubsection{$\Upsilon$ is full and faithful on 1-morphisms}
\label{omega1}

Consider models $\MM_1$ and $\MM_2$ and the corresponding derivators $\Upsilon(\MM_1)$ and $\Upsilon(\MM_2)$. Let $\theta:\Upsilon(\MM_1)\rightarrow\Upsilon(\MM_2)$ be a morphism in $\mathfrak{Der}^r$. We show that we can find a morphism of models $\Omega(\theta):\MM_1\rightarrow\MM_2$ such that $\Upsilon(\Omega(\theta))=\theta$. Let us write
\begin{displaymath}
\theta=((\theta_\CC)_{\CC\in\mathfrak{Dia}^{op}},(\beta^\theta_u)_{u:\CC\rightarrow\DD\in\mathfrak{Dia}^{op}}):\Upsilon(\MM_1)\rightarrow\Upsilon(\MM_2).
\end{displaymath}
We start defining 
\begin{displaymath} 
\Omega(\theta)=((\Omega(\theta)_\X)_{\X\in\Gk},(\beta^{\Omega(\theta)}_u)_{u:\X\rightarrow\Y\in\Gk}):\MM_1\rightarrow\MM_2
\end{displaymath}
by setting $\Omega(\theta)_\X=\theta_X$ for any $X\in\mathfrak{Dia}^{op}$ and $\beta^{\Omega(\theta)}_u=\beta^\theta_u$ for any $u\in\mathfrak{Dia}^{op}$.

We assign now $\Omega(\theta)$ on the symbols adjoined to $\mathfrak{Dia}^{op}$, that is, on $u_{(!)}$, by defining $\beta^{\Omega(\theta)}_{u_{(!)}}$ 
as the Beck-Chevalley transform of $\beta^{\Omega(\theta)}_u$: 
with this definition the naturality of $\beta^{\Omega(\theta)}_{u_{(!)}u}$ and of  $\beta^{\Omega(\theta)}_{uu_{(!)}}$ with respect to $\epsilon_{u_{(!)}}$ and to $\eta_{u_{(!)}}$ respectively, as well as the coherence conditions, are fulfilled; we skip the verification.

The naturality of $\beta^{\Omega(\theta)}_u$ with respect to 2-morphisms of the form $\alpha_{bc}^{-1}$ is also easily verified.

Therefore, $\Upsilon(\Omega(\theta))=\theta$, thus proving that $\Upsilon$ is full on 1-morphisms. 
Since $\beta^{\Omega(\theta)}_{u_{(!)}}$ is completely determined, $\Upsilon$ is also faithful.

\subsubsection{$\Upsilon$ is full and faithful on 2-morphisms}
\label{Omega2}

Consider a modification $\lambda:\Upsilon(\theta_1)\Rrightarrow\Upsilon(\theta_2)$ in $\mathfrak{Der}^r$, where $\theta_1,\theta_2:\MM_1\rightarrow\MM_2$ are 1-morphisms of models. We set 
\begin{displaymath}
\Omega(\lambda)_\CC=\lambda_\CC
\end{displaymath}
for every object $\CC$ in $\mathfrak{Dia}^{op}$. 

The commutativity of diagram \ref{modi} for $u_{(!)}$ follows from commutativity of diagram \ref{modi} for $u$ and the relation between $u$ and $u_{(!)}$ via Beck-Chevalley transforms. 

Since $\Omega(\lambda)$ is completely determined by $\lambda$, then $\Upsilon$ is full and faithful on 2-morphisms.

\section{Homotopy local presentability}
\label{accessibility}

\subsection{Homotopy locally presentable categories}
\label{summaryLR}

We recall some definitions and results from \cite{LR} regarding homotopy local presentability \cite[9.6]{LR} and the characterization  \cite[9.13]{LR}, in the case $\V=\CC at$. 

We recall the definition of homotopy filtered colimit, by means of which we will introduce homotopy presentability \cite[6.4]{LR}.
Let $\lambda$ be a regular cardinal, $\II$ the free 2-category on an ordinary small $\lambda$-filtered category, $F:\II\rightarrow\Ck$ a 2-functor, $\delta_e:\II^{op}\rightarrow\mathfrak{Cat}$ the 2-functor constant at the terminal category, $\Qc\delta_e$ a cofibrant replacement of $\delta_e$: the homotopy $\lambda$-filtered colimit $\rm{hocolim}F$ of $F$ is defined as the weighted homotopy colimit $\Qc\delta_e\star_h F$. Homotopy filtered colimits are computed up to equivalence by ordinary conical filtered colimits 
\cite[5.9]{LR}.

\begin{definition}
Let $\Ck$ be a 2-category. An object $C$ in $\Ck$ is homotopy $\lambda$-presentable if $\Ck(C,-):\Ck\rightarrow\mathfrak{Cat}$ preserves homotopy $\lambda$-filtered colimits.
\end{definition}

The following is the definition of homotopy locally presentable 2-category \cite[9.6]{LR}. Below, a 2-functor $F:\RR\rightarrow\Sk$ is called a local equivalence if $F_{XX'}:\RR(X,X')\rightarrow\Sk(F(X),F(X'))$ is an equivalence of categories for every objects $X$ and $X'$ of $\RR$ (see \cite[7]{LR} or \cite[1.1.4]{R}).

\begin{definition}
\label{defhlp}
Let $\Ck$ be a 2-category admitting weighted homotopy 2-colimits, $i:\Ak\hookrightarrow\Ck$ a small full 2-subcategory of homotopy $\lambda$-presentable objects. We say that $\Ak$ exhibits $\Ck$ as strongly homotopy locally $\lambda$-presentable if every object of $\Ck$ is a homotopy $\lambda$-filtered colimit of objects of $\Ak$. We say that $\Ak$ exhibits $\Ck$ as homotopy locally $\lambda$-presentable if the induced functor
\begin{displaymath}
\xymatrix{
\Ck \ar[r]^(.35){\Ck(i,-)} & [\Ak^{op},\mathfrak{Cat}] \ar[r]^\Qc & [\Ak^{op},\mathfrak{Cat}]
}
\end{displaymath}
is a local equivalence. 

We say that $\Ck$ is strongly homotopy locally $\lambda$-presentable or homotopy locally $\lambda$-presentable if there is some such $\Ak$, and that $\Ck$ is strongly homotopy locally presentable or homotopy locally presentable if it is so for some $\lambda$.
\end{definition}

Notice that strongly homotopy local presentability implies homotopy local presentability (\cite[9.7]{LR}).

A characterization of homotopy locally presentable 2-categories is \cite[9.13]{LR}. 

\begin{theorem}
\label{lrth}
Suppose there exists a combinatorial model 2-category $\Dk$ and a biequivalence $\Ck\rightarrow{\rm Int}\Dk$, then $\Ck$ is strongly homotopy local presentable. Assuming Vop\v{e}nka's principle, the converse holds true, and $\Dk$ can be taken to be a left Bousfield localization of the 2-category $[\Ak^{op},\mathfrak{Cat}]$, where $\Ak$ is as in definition \ref{defhlp}.
\end{theorem}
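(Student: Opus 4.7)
The plan is to prove the two directions separately, with the converse requiring the additional set-theoretic hypothesis. For the forward direction, suppose $\Dk$ is a combinatorial model 2-category; since combinatoriality implies that the underlying ordinary category is locally $\lambda$-presentable for some regular $\lambda$, and that the generating (trivial) cofibrations have $\lambda$-presentable domains and codomains, I would choose $\lambda$ large enough that fibrant and cofibrant replacement functors preserve $\lambda$-presentable objects, and take $\Ak$ to be a small skeleton of the fibrant-cofibrant replacements of a set of $\lambda$-presentable generators of $\Dk$. The crux is then the \emph{presentation lemma} for combinatorial model 2-categories: any fibrant-cofibrant object $X$ can be written as a $\lambda$-filtered colimit of $\lambda$-presentable cofibrant objects, and since $\lambda$-filtered colimits of cofibrant objects are already homotopy colimits (this is the 2-categorical analogue of the fact used in \cite[5.9]{LR}), this presentation is a homotopy $\lambda$-filtered colimit in ${\rm Int}\Dk$ of objects in $\Ak$. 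Homotopy $\lambda$-presentability of $A \in \Ak$ follows because $\Dk(A,-)$ preserves $\lambda$-filtered colimits at the 1-categorical level, and these coincide (up to equivalence) with homotopy $\lambda$-filtered colimits on cofibrant diagrams. Transporting along the biequivalence $\Ck \to {\rm Int}\Dk$ concludes this direction.

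For the reverse direction, assume $\Ck$ is strongly homotopy locally $\lambda$-presentable with $i:\Ak \hookrightarrow \Ck$ as in Definition \ref{defhlp}. I would take $\Dk_0 = [\Ak^{op},\mathfrak{Cat}]$ equipped with the projective model 2-category structure, which is combinatorial (all objects being fibrant, with cofibrant objects the flexible 2-functors). The 2-functor
\begin{displaymath}
Y:\Ck \longrightarrow \Dk_0, \qquad X \longmapsto \Qc\,\Ck(i(-),X),
\end{displaymath}
is (by assumption on $\Ck$) a local equivalence onto its essential image, and one identifies this image as the class of $\Sigma$-local objects for a class $\Sigma$ of morphisms in $\Dk_0$ encoding that each object of $\Ck$ appears as a homotopy $\lambda$-filtered colimit of representables $\Ak(-,A)$. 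Vop\v{e}nka's principle intervenes here: it guarantees that every orthogonality class in a locally presentable category is small-orthogonality, so $\Sigma$ can be replaced by a set $\Sigma_0$, and then a 2-categorical version of J. Smith's theorem produces the left Bousfield localization $\Dk := L_{\Sigma_0}\Dk_0$ as a combinatorial model 2-category whose fibrant-cofibrant objects are the $\Sigma_0$-local flexible 2-functors. One finally checks that $Y$ corestricts to a biequivalence $\Ck \to {\rm Int}\Dk$: essential surjectivity from the presentation of any $\Sigma_0$-local object as a homotopy colimit of representables and reconstruction by $Y$, local equivalence from the defining condition of homotopy local $\lambda$-presentability.

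The hard part will be the converse, and within it the identification of the Bousfield-localized model structure with the correct 2-category of homotopy models. There are two intertwined difficulties: first, pinning down a class $\Sigma$ which is both sufficient (its locals recover $\Ck$) and necessary (no extra relations are imposed); second, upgrading set-orthogonality to a compatible \emph{model} structure, which is exactly where Vop\v{e}nka is unavoidable, because without it the class $\Sigma$ need not be small and Smith's recognition theorem fails to apply. A secondary obstacle is verifying that the $\Qc$-completed Yoneda 2-functor $Y$ is actually 2-essentially surjective onto ${\rm Int}\Dk$ rather than merely onto a dense sub-2-category, which requires the strong form of homotopy local presentability rather than the weaker version, and hence would use the implication \cite[9.7]{LR} in an essential way at the start of the argument.
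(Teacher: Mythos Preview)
The paper does not prove this theorem at all: it is quoted verbatim as \cite[9.13]{LR} (Lack--Rosick\'y), with the remark immediately following that only the forward implication is used and that this part does not depend on Vop\v{e}nka's principle. There is therefore no proof in the paper to compare your proposal against.

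That said, your sketch is a reasonable reconstruction of the shape of the argument in \cite{LR}. For the forward direction, the choice of $\lambda$ large enough that (co)fibrant replacement preserves $\lambda$-presentability, and the identification of homotopy $\lambda$-filtered colimits with ordinary ones on cofibrant diagrams, is exactly the content of \cite[9.8, 5.9]{LR}. For the converse, your use of Vop\v{e}nka to reduce a class-sized orthogonality condition to a set, followed by Smith's theorem to produce the Bousfield localization, matches the strategy of \cite[9.10--9.13]{LR}. One small inaccuracy: the converse in \cite{LR} starts from the \emph{weak} form of homotopy local presentability (the local-equivalence condition of Definition~\ref{defhlp}), not the strong form, so the implication \cite[9.7]{LR} is not needed as a preliminary step; rather, the weak hypothesis already gives the local equivalence $Y$, and essential surjectivity onto ${\rm Int}\Dk$ comes from the definition of the localization itself.
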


Note that we will be using only the first part of theorem \ref{lrth} (namely, \cite[9.13]{LR}), which does not depend on Vop\v{e}nka's principle.

\subsection{The 2-category $\mathfrak{hMod}^{ps}_\Sk$ of homotopy models of $\Sk$}
\label{skw}

We now apply what recalled in \ref{summaryLR} to $\mathfrak{hMod}^{ps}_\Sk$. By \cite[9.14(1)]{LR} we know that 
the 2-category of homotopy models of $\Sk$ is homotopy locally presentable, however, as we are interested in $\mathfrak{hMod}^{ps}_\Sk$ where we allow pseudo-natural transformations as 1-morphisms, we show that the same procedure applies also to this case, leading to the same conclusion.

Let ${\rm Int}[\Gk,\mathfrak{Cat}]$ denote the full 2-subcategory spanned by the flexible 2-functors, that is, the cofibrant objects of $[\Gk,\mathfrak{Cat}]$. By means of the cofibrant replacement $\Qc$ (see section \ref{sketches}), we have the following result.

\begin{lemma}
\label{redf}
There is a biequivalence $\Qc:\Pro s(\Gk,\mathfrak{Cat})\longrightarrow{\rm Int}[\Gk,\mathfrak{Cat}]$, provided by the cofibrant replacement functor.
\end{lemma}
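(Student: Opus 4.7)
The plan is to exploit the adjunction $\Qc\dashv i$ together with the fact, recalled in section~\ref{sketches}, that $\Qc G$ is always cofibrant and that $\eta_G:G\to\Qc G$ is a pseudo-natural equivalence with inverse $\varepsilon_G$. First, since every $\Qc G$ is a cofibrant replacement, the functor $\Qc$ does land in $\mathrm{Int}[\Gk,\mathfrak{Cat}]$, so we may regard it as a 2-functor $\Qc:\Pro s(\Gk,\mathfrak{Cat})\to\mathrm{Int}[\Gk,\mathfrak{Cat}]$.

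For 2-essential surjectivity, I would take an arbitrary flexible object $F\in\mathrm{Int}[\Gk,\mathfrak{Cat}]$. By the characterization of flexible 2-functors recalled in section~\ref{sketches}, the counit $\varepsilon_F:\Qc F\to F$ admits a 2-natural section, hence $F\simeq\Qc F$ already inside $[\Gk,\mathfrak{Cat}]$. Since $\mathrm{Int}[\Gk,\mathfrak{Cat}]$ is a full 2-subcategory, this equivalence lives there; in particular $F\simeq \Qc(iF)$ where $i$ is the inclusion into $\Pro s(\Gk,\mathfrak{Cat})$.

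For local equivalence, I need to show that for $F,G\in\Pro s(\Gk,\mathfrak{Cat})$ the functor
\[
\Qc_{F,G}:\Pro s(\Gk,\mathfrak{Cat})(F,G)\longrightarrow [\Gk,\mathfrak{Cat}](\Qc F,\Qc G)
\]
is an equivalence of categories. Applying the natural isomorphism \eqref{coheq} with $H=\Qc G$ gives
\[
[\Gk,\mathfrak{Cat}](\Qc F,\Qc G)\;\cong\;\Pro s(\Gk,\mathfrak{Cat})(F,\Qc G),
\]
while post-composition with the pseudo-natural equivalence $\eta_G:G\to \Qc G$ (whose quasi-inverse is $\varepsilon_G$, with $\varepsilon_G\circ\eta_G=1_G$ and $\eta_G\circ\varepsilon_G\cong 1_G$ as recalled in section~\ref{sketches}) induces an equivalence of hom-categories
\[
\Pro s(\Gk,\mathfrak{Cat})(F,G)\;\xrightarrow{\;\simeq\;}\;\Pro s(\Gk,\mathfrak{Cat})(F,\Qc G).
\]
Composing these two displays yields the required equivalence; the only point that requires attention is that the composite coincides, up to natural isomorphism, with $\Qc_{F,G}$ itself. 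This I would verify by a direct diagram chase using the triangle identity $\varepsilon_{\Qc G}\circ\Qc\eta_G=1_{\Qc G}$ and the definition of $\Qc$ on 1- and 2-cells in terms of the unit. This last compatibility check is the only non-formal step; everything else is a consequence of the adjunction \eqref{coheq} and the pseudo-natural equivalence $G\simeq\Qc G$.
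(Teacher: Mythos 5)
Your argument is correct and is essentially the paper's own proof: 2-essential surjectivity from the equivalence $F\simeq\Qc F$ in $[\Gk,\mathfrak{Cat}]$ for flexible $F$, and the local equivalence from the composite ${\rm Int}[\Gk,\mathfrak{Cat}](\Qc F,\Qc G)\cong\Pro s(\Gk,\mathfrak{Cat})(F,\Qc G)\simeq\Pro s(\Gk,\mathfrak{Cat})(F,G)$ obtained from \eqref{coheq} and the equivalence $\eta_G:G\to\Qc G$. Your final compatibility check is indeed the only point to verify, and it follows directly from the naturality of the unit, which gives $\eta_G\circ\theta=i\Qc(\theta)\circ\eta_F$, identifying $\Qc_{F,G}$ under \eqref{coheq} with postcomposition by $\eta_G$.
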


We soon deduce the following corollary. 

\begin{corollary}
\label{prohlp}
$\Pro s(\Gk,\mathfrak{Cat})$ is strongly homotopy locally presentable.
\end{corollary}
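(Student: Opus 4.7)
The plan is to combine Lemma \ref{redf} with the first (unconditional) part of Theorem \ref{lrth}. First I recall that the 2-category $[\Gk,\mathfrak{Cat}]$, where $\Gk$ is small, carries the projective model 2-category structure described at the beginning of section \ref{sketches}: it is combinatorial, because $\mathfrak{Cat}$ with the standard model structure is combinatorial and we are looking at a diagram category over a small 2-category, and it is a model 2-category in the sense of \cite{LR} thanks to the monoidal model structure on $\mathfrak{Cat}$. Hence $[\Gk,\mathfrak{Cat}]$ is a combinatorial model 2-category, and its full sub-2-category of cofibrant (equivalently, flexible) objects is precisely ${\rm Int}[\Gk,\mathfrak{Cat}]$.

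Next I would invoke Lemma \ref{redf}, which exhibits a biequivalence
\begin{equation*}
\Qc:\Pro s(\Gk,\mathfrak{Cat})\longrightarrow{\rm Int}[\Gk,\mathfrak{Cat}]
\end{equation*}
given by cofibrant replacement. Taking $\Ck=\Pro s(\Gk,\mathfrak{Cat})$ and $\Dk=[\Gk,\mathfrak{Cat}]$, the hypotheses of the first (unconditional) half of Theorem \ref{lrth} are therefore met: we have a biequivalence from $\Ck$ to ${\rm Int}\Dk$ with $\Dk$ a combinatorial model 2-category. The conclusion of that half of the theorem is exactly that $\Ck$ is strongly homotopy locally presentable, which is the statement of the corollary.

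I expect the only subtlety to be book-keeping: verifying that the model 2-category structure on $[\Gk,\mathfrak{Cat}]$ falls within the framework of \cite{LR} (so that Theorem \ref{lrth} applies verbatim), and that the biequivalence of Lemma \ref{redf} is the same notion of biequivalence used in definition \ref{defhlp} and theorem \ref{lrth}. Both are essentially formal: the former is the content of the discussion in section \ref{sketches} around the equivalence \eqref{coheq}, and the latter follows since a local equivalence that is 2-essentially surjective is a biequivalence in either convention. No Vop\v{e}nka-type assumption is needed, since we only use the sufficiency direction of Theorem \ref{lrth}. This same argument, applied to the cones in $\Pro$, will in the next step yield the analogous presentability statement for $\mathfrak{hMod}^{ps}_\Sk$, which is where the regular cardinal $\lambda$ bounding the sizes of categories in $\mathfrak{Dia}$ will come into play.
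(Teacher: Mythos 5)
Your argument is correct and takes essentially the same route as the paper: both rest on the biequivalence $\Qc:\Pro s(\Gk,\mathfrak{Cat})\to{\rm Int}[\Gk,\mathfrak{Cat}]$ of Lemma \ref{redf} together with the Lack--Rosick\'y presentability machinery, the only difference being that the paper cites \cite[9.8]{LR} (Int of the combinatorial model 2-category $[\Gk,\mathfrak{Cat}]$ is strongly homotopy locally presentable) and \cite[9.15]{LR} (invariance under biequivalence) separately, whereas you invoke the packaged statement of theorem \ref{lrth}. Your remark that only the unconditional direction is used, so no Vop\v{e}nka-type hypothesis is needed, agrees with the paper's comment following theorem \ref{lrth}.
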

\begin{proof}
By \cite[9.8]{LR}, ${\rm Int}[\Gk,\mathfrak{Cat}]$ is strongly homotopy locally presentable. The claim now follows now from \ref{redf} and \cite[9.15]{LR}.
\end{proof}

To prove that $\mathfrak{hMod}^{ps}_\Sk$ is homotopy locally presentable, we show that $\mathfrak{hMod}^{ps}_\Sk$ is a homotopy orthogonal subcategory of $\Pro s(\Gk,\mathfrak{Cat})$ (see \cite[4.1]{LR} for the general definition of homotopy orthogonal). 
The proof extends the one given in \cite[6.11]{Ke}. 

\begin{lemma}
\label{dkps}
$\mathfrak{hMod}^{ps}_\Sk$ is a homotopy orthogonal subcategory of $\Pro s(\Gk,\mathfrak{Cat})$.
\end{lemma}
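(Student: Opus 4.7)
The plan is to adapt Kelly's argument \cite[6.11]{Ke} to the homotopy and pseudo-natural setting. For each cone $(\E,F,G,\LC,\gamma)\in\Pro$ I will construct a canonical comparison morphism $\sigma$ in $\Pro s(\Gk,\mathfrak{Cat})$ and verify that a 2-functor $M:\Gk\to\mathfrak{Cat}$ is homotopy orthogonal to all such $\sigma$ precisely when $M$ is a homotopy model of $\Sk$. Since $\Pro$ is a set, this will exhibit $\mathfrak{hMod}^{ps}_\Sk$ as the homotopy orthogonal subcategory of $\Pro s(\Gk,\mathfrak{Cat})$ associated to the resulting set of morphisms.

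First, for each cone in $\Pro$, I would form the 2-functor $G\star_h \Gk(F-,?):\Gk\to\mathfrak{Cat}$ sending $?$ to the homotopy 2-colimit of $\Gk(F-,?)$ weighted by $G$. This is well-defined since $G$ is cofibrant by definition of weighted homotopy limit 2-sketch (\ref{enrsk}), and since the representables $\Gk(F(e),-)$ are cofibrant (hence flexible) so that homotopy colimits and ordinary colimits agree up to equivalence with them. The 2-natural transformation $\gamma:G\Rightarrow\Gk(\LC,F-)$ then corresponds, via the universal property defining $G\star_h\Gk(F-,-)$, to a morphism
\begin{equation*}
\sigma:G\star_h\Gk(F-,?)\longrightarrow\Gk(\LC,?)
\end{equation*}
in $[\Gk,\mathfrak{Cat}]$, which we view in $\Pro s(\Gk,\mathfrak{Cat})$ through the inclusion.

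Next I would compute $\Pro s(\Gk,\mathfrak{Cat})(\sigma,M)$. The representables $\Gk(\LC,-)$ and $\Gk(F(e),-)$ are flexible, so the pseudo-natural Yoneda lemma applies (this is a direct consequence of \eqref{coheq} together with the ordinary Yoneda isomorphism $[\Gk,\mathfrak{Cat}](\Gk(X,-),M)\cong M(X)$), giving
\begin{equation*}
\Pro s(\Gk,\mathfrak{Cat})(\Gk(\LC,-),M)\simeq M(\LC).
\end{equation*}
Combining this with the defining universal property of the weighted homotopy colimit applied levelwise, one gets
\begin{equation*}
\Pro s(\Gk,\mathfrak{Cat})(G\star_h\Gk(F-,-),M)\simeq\{G,MF-\}_h,
\end{equation*}
and under these equivalences $\Pro s(\Gk,\mathfrak{Cat})(\sigma,M)$ is naturally identified with the canonical comparison functor $M(\LC)\to\{G,MF-\}_h$ induced by $M\gamma$. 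This comparison is an equivalence of categories exactly when $M$ sends the cone $(\E,F,G,\LC,\gamma)$ to a weighted homotopy 2-limit, i.e.\ when $M$ is a homotopy model in the sense of definition \ref{hm}.

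Taking $\Sigma$ to be the set of all such $\sigma$, ranging over the cones of $\Pro$, we conclude that $M\in\mathfrak{hMod}^{ps}_\Sk$ iff $M$ is homotopy orthogonal to every element of $\Sigma$, which is the desired conclusion. The main obstacle is making the pseudo-natural Yoneda manipulations rigorous: one must ensure that the two equivalences above are genuinely natural in $M$ and compose to give the expected comparison, and this relies essentially on the flexibility of representables and on the cofibrancy of the weights $G$, which is why we chose to work from the outset with a homotopy limit 2-sketch whose weights are cofibrant.
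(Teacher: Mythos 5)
Your proposal is correct and follows essentially the same route as the paper: for each cone you build the comparison morphism from the weighted homotopy colimit of representables $G\star_h\Gk(F-,?)$ to $\Gk(\LC,-)$ induced by $\gamma$, and then identify homotopy orthogonality to it with the homotopy-model condition via the Yoneda lemma for flexible representables and the universal property of the homotopy colimit, exactly as in the paper's extension of Kelly's \cite[6.11]{Ke}. The only cosmetic difference is that the paper tests orthogonality against the cotensors $[\DD,\MM-]$ for all $\DD\in\mathfrak{Cat}$ to handle the $2$-naturality required in the definition of weighted homotopy $2$-limit, whereas you check it at $\MM$ itself and absorb that naturality into the cofibrancy/flexibility remarks, which amounts to the same thing since the comparison is induced by an honest functor $\MM(\LC)\rightarrow\{G,\MM F\}$.
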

\begin{proof}
Consider a cone $(\E,F,G,\LC,\gamma)\in\Pro$ and the composite, which we denote $i\Y(\gamma)$,
\begin{displaymath}
\xymatrix{
G \ar[r]^(.3)\gamma & \Gk(\LC,F-) \ar[rr]^(.35){i\Y_{\LC,F-}} & & \Pro s(\Gk,\mathfrak{Cat})(i\Y(F-),i\Y(\LC)),
}
\end{displaymath}
where $\Y$ indicates the enriched contravariant Yoneda embedding $\Gk\rightarrow[\Gk,\mathfrak{Cat}]$ and $i$ the inclusion $[\Gk,\mathfrak{Cat}]\hookrightarrow\Pro s(\Gk,\mathfrak{Cat})$.
Since $\Pro s(\Gk,\mathfrak{Cat})$ has weighted homotopy 2-colimits (corollary \ref{prohlp}), $i\Y(\gamma)$ yields a 1-morphism
\begin{equation*}
\rho:G\star_hi\Y(F-)\longrightarrow i\Y(\LC)
\end{equation*}
in $\Pro s(\Gk,\mathfrak{Cat})$. 

We prove that a 2-functor $\MM:\Gk\rightarrow\mathfrak{Cat}$ preserves
the weighted homotopy 2-limits of $\Pro$, that is, it is a homotopy model,
if and only if, for any $\DD\in\mathfrak{Cat}$, the 2-functor $[\DD,\MM-]$ is homotopy orthogonal in $\Pro s(\Gk,\mathfrak{Cat})$ to the collection of 1-morphisms $\rho$ constructed above from cones of $\Pro$, namely,
the functor $\Pro s(\Gk,\mathfrak{Cat})(\rho,[\DD,\MM-])$
\begin{multline}
\label{fff}
\Pro s(\Gk,\mathfrak{Cat})(i\Y(\LC),[\DD,\MM-])\longrightarrow\Pro s(\Gk,\mathfrak{Cat})(G\star_hi\Y(F-),[\DD,\MM-])
\end{multline}
is an equivalence of categories.

Since $\Y(\LC)$ is flexible (\cite[4.6]{BKPS}) and by the enriched Yoneda lemma, we have an equivalence
\begin{equation}
\label{fffs}
\Pro s(\Gk,\mathfrak{Cat})(i\Y(\LC),[\DD,\MM-])\simeq[\DD,\MM(\LC)].
\end{equation}

On the other hand, by definition of weighted homotopy 2-colimit, we obtain an equivalence
\begin{multline*}
\Pro s(\Gk,\mathfrak{Cat})(G\star_hi\Y(F-),[\DD,\MM-])\simeq \\ \simeq[\Gk,\mathfrak{Cat}](G,\Pro s(\Gk,\mathfrak{Cat})(i\Y(F-),[\DD,\MM-])
\end{multline*}
and, using again the flexibility of $\Y(\LC)$ and the enriched Yoneda lemma, an equivalence
\begin{equation}
\label{fffd}
[\Gk,\mathfrak{Cat}](G,\Pro s(\Gk,\mathfrak{Cat})(i\Y(F-),[\DD,\MM-])\simeq[\Gk,\mathfrak{Cat}](G,[\DD,\MM\circ F-].
\end{equation}

By the equivalences \eqref{fffs} and \eqref{fffd}, the functor \eqref{fff} induces an equivalence
\begin{equation*}
[\Gk,\mathfrak{Cat}](G,[\DD,\MM\circ F-]\longrightarrow[\DD,\MM(\LC)],
\end{equation*}
or, equivalently, $\MM(\LC)\simeq\{G,\MM\circ F\}_h$, that is, $\MM$ takes all the cones of $\Pro$ to weighted homotopy limit cones. 
\end{proof}

Writing 
$\Sigma$ for the collection of all morphisms $\rho$ as in lemma \ref{dkps}, $\mathfrak{hMod}^{ps}_\Sk$ can be identified with the homotopy orthogonal subcategory $\Pro s(\Gk,\mathfrak{Cat})_\Sigma$ of $\Pro s(\Gk,\mathfrak{Cat})$. 

\begin{corollary}
\label{nn}
$\mathfrak{hMod}^{ps}_\Sk$ is strongly homotopy locally presentable, and there are biequivalences
\begin{equation*}
\mathfrak{hMod}^{ps}_\Sk\longrightarrow\Pro s(\Gk,\mathfrak{Cat})_\Sigma\longrightarrow{\rm Int}[\Gk,\mathfrak{Cat}]_{\Qc(\Sigma)}
\end{equation*}
\end{corollary}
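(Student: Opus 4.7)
The first biequivalence $\mathfrak{hMod}^{ps}_\Sk \longrightarrow \Pro s(\Gk,\mathfrak{Cat})_\Sigma$ is essentially formal given Lemma \ref{dkps}. That lemma characterized a 2-functor $\MM:\Gk\rightarrow\mathfrak{Cat}$ as lying in $\mathfrak{hMod}^{ps}_\Sk$ precisely when, viewed as an object of $\Pro s(\Gk,\mathfrak{Cat})$, it is homotopy orthogonal to every $\rho\in\Sigma$. Since both 2-categories are full sub-2-categories of $\Pro s(\Gk,\mathfrak{Cat})$ on exactly the same class of objects, the identity-on-objects inclusion is actually an isomorphism of 2-categories, a fortiori a biequivalence.

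For the second biequivalence, I would transport the first along the cofibrant replacement functor $\Qc:\Pro s(\Gk,\mathfrak{Cat})\longrightarrow {\rm Int}[\Gk,\mathfrak{Cat}]$ of Lemma \ref{redf}. The key point is that homotopy orthogonality is invariant under a biequivalence of the ambient 2-category: for $\MM\in\Pro s(\Gk,\mathfrak{Cat})$ and $\rho\in\Sigma$, the functor $\Pro s(\Gk,\mathfrak{Cat})(\rho,\MM)$ is an equivalence of categories if and only if ${\rm Int}[\Gk,\mathfrak{Cat}](\Qc\rho,\Qc\MM)$ is, because $\Qc$ induces an equivalence between these hom-categories. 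Hence $\Qc$ restricts to a biequivalence $\Pro s(\Gk,\mathfrak{Cat})_\Sigma\longrightarrow{\rm Int}[\Gk,\mathfrak{Cat}]_{\Qc(\Sigma)}$, giving the second leg of the composite.

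Finally, for strong homotopy local presentability, the plan is to invoke \cite[9.9]{LR}, which asserts that every small homotopy orthogonal 2-subcategory of ${\rm Int}[\Gk,\mathfrak{Cat}]$ is strongly homotopy locally presentable. The collection $\Sigma$ is small because $\Pro$ is a set (under the standing assumption that $\mathfrak{Dia}$ is $\U$-small), so $\Qc(\Sigma)$ is small as well, and the cited result applies to ${\rm Int}[\Gk,\mathfrak{Cat}]_{\Qc(\Sigma)}$. Strong homotopy local presentability is preserved under biequivalence by \cite[9.15]{LR}, so the property transfers back along the composite biequivalence to $\mathfrak{hMod}^{ps}_\Sk$.

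The main technical step, and the one I expect to require the most care to verify cleanly, is the compatibility between the biequivalence $\Qc$ and the notion of homotopy orthogonality used to cut out $\mathfrak{hMod}^{ps}_\Sk$; once that reduction is in place, the remaining assertions are a matter of quoting the structural results from \cite{LR}. The smallness of $\Sigma$ is exactly what makes \cite[9.9]{LR} applicable, and is the only place the size assumption on $\mathfrak{Dia}$ enters the argument.
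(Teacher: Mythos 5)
Your proof is correct and takes essentially the same route as the paper: the paper's one-line proof is exactly the combination of Lemma \ref{dkps} (identifying $\mathfrak{hMod}^{ps}_\Sk$ with $\Pro s(\Gk,\mathfrak{Cat})_\Sigma$), the biequivalence $\Qc$ of Lemma \ref{redf} restricted to the homotopy orthogonal subcategories, and \cite[9.9]{LR} for ${\rm Int}[\Gk,\mathfrak{Cat}]_{\Qc(\Sigma)}$, with the transfer back handled by invariance of strong homotopy local presentability under biequivalence (\cite[9.15]{LR}) and smallness of $\Sigma$ coming from $\Pro$ being a set. Your explicit transport-of-orthogonality argument along $\Qc$ is precisely the verification the paper leaves implicit.
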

\begin{proof}
By lemma \ref{redf} and \ref{dkps}, the proof follows from proposition \cite[9.9]{LR}.
\end{proof}

Observe that ${\rm Int}[\Gk,\mathfrak{Cat}]$ and $\Pro s(\Gk,\mathfrak{Cat})$ are strongly homotopy locally finitely presentable, as representable functors are homotopy finitely presentables (see \cite[9.8-7.1(3)]{LR}). We will prove now that $\mathfrak{hMod}^{ps}_\Sk$ is strongly homotopy locally $\lambda$-presentable, where $\lambda$ is a regular cardinal which bounds the size of any category in $\mathfrak{Dia}$. 
First we need a few results summarized in the remark below.

\begin{remark}
\label{lop}
\rm
(1) By \cite[8.5]{LR}, $\mathfrak{hMod}^{ps}_\Sk$ is a homotopy reflective 2-subcategory of $\Pro s(\Gk,\mathfrak{Cat})$.
Let $j$ and $r$ denote the inclusion and reflection 
\begin{equation*}
j:\mathfrak{hMod}^{ps}_\Sk\rightleftarrows\Pro s(\Gk,\mathfrak{Cat}):r.
\end{equation*}
Weighted homotopy 2-colimits in $\mathfrak{hMod}^{ps}_\Sk$ are computed by means of the reflection $r$ from the corresponding weighted homotopy 2-colimit in $\Pro s(\Gk,\mathfrak{Cat})$ (see the proof of \cite[9.9]{LR}): if $F$ is a diagram in $\mathfrak{hMod}^{ps}_\Sk$, then $G\star_hF\simeq r(G\star_hjF)$.

(2) We can now use the biequivalences $\Qc$ and $i$ to compute weighted homotopy 2-colimits in $\Pro s(\Gk,\mathfrak{Cat})$: indeed, by \cite[7.1]{LR} biequivalences preserve and create weighted homotopy colimits, so if $F$ is a diagram in $\Pro s(\Gk,\mathfrak{Cat})$, then $G\star_hF\simeq G\star_hi\Qc F\simeq i(G\star_h\Qc F)$. 

(3) Finally, as explained in the proof of \cite[5.5]{LR}, 
weighted homotopy 2-colimits $G\star_hF$ in ${\rm Int}[\Gk,\mathfrak{Cat}]$ are computed as fibrant replacement of the weighted 2-colimits $G\star F$ in $[\Gk,\mathfrak{Cat}]$, so
by $G\star F$ itself. The advantage is that weighted 2-colimits in $[\Gk,\mathfrak{Cat}]$ are computed pointwise (\cite[3.3]{Ke}).  

(4) It is convenient to replace the weighted homotopy limit 2-sketch $\Sk=(\Gk,\Pro)$ for derivators with a realized one, that is, whose underlying category has the same objects as $\Gk$, whose cones are already homotopy limit cones and whose 2-category of homotopy models is equivalent to that of $\Sk$; the proof of the existence of such homotopy limit 2-sketch is analogous to that of \cite[6.21]{Ke}. We denote this new sketch by $\T$. 
In this way, representable 2-functors, which we will write as $\T(\CC,-)$, are automatically homotopy models of $\T$. 
\end{remark}

Let $\lambda$ be a regular cardinal which bounds the size of any category in $\mathfrak{Dia}$.
\begin{lemma}
\label{hofiltcolim}
Homotopy $\lambda$-filtered colimits in $\mathfrak{hMod}^{ps}_\T$ are computed as in $\Pro s(\Gk,\mathfrak{Cat})$, particularly, they are computed pointwise via $\Qc$.
\end{lemma}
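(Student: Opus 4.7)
The plan is to show that for a $\lambda$-filtered diagram $H:\I\to\mathfrak{hMod}^{ps}_\T$, its homotopy colimit computed in the ambient $\Pro s(\Gk,\mathfrak{Cat})$ is already a homotopy model; the reflection of remark \ref{lop}(1) will then act on it, up to equivalence, as the identity, and the pointwise formula for the colimit will be inherited from the computation in $\Pro s(\Gk,\mathfrak{Cat})$.

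Writing $j\dashv r$ for the adjunction of remark \ref{lop}(1), that remark yields ${\rm hocolim}\,H\simeq r({\rm hocolim}\,jH)$, while remark \ref{lop}(2)--(3) together with \cite[5.9]{LR} shows that ${\rm hocolim}\,jH$ is computed pointwise through $\Qc$, namely $({\rm hocolim}\,jH)(\X)\simeq{\rm colim}_{\I}\,(jH(-))(\X)$ at every $\X\in\Gk$. The task therefore reduces to showing that ${\rm hocolim}\,jH$ is a homotopy model, for then $r({\rm hocolim}\,jH)\simeq{\rm hocolim}\,jH$ and the pointwise description carries over from $\Pro s(\Gk,\mathfrak{Cat})$.

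The key ingredient will be that $\lambda$-small weighted homotopy 2-limits commute in $\mathfrak{Cat}$ with $\lambda$-filtered homotopy 2-colimits, a standard feature of the standard model structure. I will check that each cone $(\E,F,G,\LC,\gamma)\in\Pro$ of the realized sketch $\T$ is $\lambda$-small: inspecting subsections \ref{first} and \ref{secondd}, the diagrams $\E$ are $\{0,1\}$, $\varnothing$, or the category $\CC'$ obtained from some $\CC\in\mathfrak{Dia}$ by adjoining an initial object, while the weights take values in ${\mathbbm 2}$ or $e$; since $\lambda$ bounds the size of every object of $\mathfrak{Dia}$, each such weighted homotopy limit is $\lambda$-small. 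As every $jH(i)$ sends these cones to weighted homotopy limit cones in $\mathfrak{Cat}$, and the commutation property passes through the pointwise colimit, ${\rm hocolim}\,jH$ will preserve them as well, making it a homotopy model.

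The most delicate step will be the passage to the realized sketch $\T$ invoked in remark \ref{lop}(4): in principle realization could introduce additional cones, and one must verify these remain $\lambda$-small. This will follow because the Morita-type procedure of \cite[6.21]{Ke} used to construct $\T$ only rewrites the cones of $\Pro$ as strict homotopy limit cones over the same diagrams, without enlarging them beyond $\lambda$, so the bound provided by the choice of $\lambda$ is preserved.
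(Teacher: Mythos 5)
Your proposal is correct and follows essentially the same route as the paper: both reduce the statement to showing that ${\rm hocolim}\,jH$, computed pointwise in $\Pro s(\Gk,\mathfrak{Cat})$ via $\Qc$, preserves the cones of $\Pro$, using that these cones are $\lambda$-small and that $\lambda$-small weighted homotopy 2-limits commute with $\lambda$-filtered homotopy colimits (\cite[6.10]{LR}). The only cosmetic difference is that you pass through the reflection $r$ acting as the identity on models, where the paper concludes directly from fullness of $\mathfrak{hMod}^{ps}_\T$ in $\Pro s(\Gk,\mathfrak{Cat})$; your extra remark on the realized sketch $\T$ is a point the paper leaves implicit.
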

\begin{proof}
Let $\II$ be the free 2-category on an ordinary small $\lambda$-filtered category, and $H:\II\rightarrow\mathfrak{hMod}^{ps}_\T$ a 2-functor. We want to prove that 
the homotopy $\lambda$-filtered colimit ${\rm hocolim}jH$ in $\Pro s(\Gk,\mathfrak{Cat})$ is indeed the homotopy $\lambda$-filtered colimit ${\rm hocolim}H$ in $\mathfrak{hMod}^{ps}_\T$, where $j$ denotes the inclusion of $\mathfrak{hMod}^{ps}_\T$ into $\Pro s(\Gk,\mathfrak{Cat})$. To this purpose, we verify that ${\rm hocolim}jH$ preserves the weighted homotopy limit cones of $\Pro$, thus proving that it belongs to $\mathfrak{hMod}^{ps}_\T$. 


Notice that, as observed in remark \ref{lop}, ${\rm hocolim}jF$ is computed by the pointwise ordinary filtered colimit ${\rm colim}\Qc jH$ in $[\Gk,\mathfrak{Cat}]$. 


Since the weighted homotopy limit cones in $\Pro$ are $\lambda$-small, in the sense that they have $\lambda$-small diagrams and are weighted by $\lambda$-presentable 2-functors, they commute with $\lambda$-filtered homotopy colimits (\cite[6.10]{LR}). Therefore,  
%
\begin{align*}
({\rm colim}\Qc jH)(\{G,F\}_h) & \simeq{\rm colim}(\Qc jH)(\{G,F\}_h)) \\
 & \simeq{\rm colim}(\{G,\Qc jH(F)\}_h) \\
 & \simeq\{G,{\rm colim}(\Qc jH(F))\}_h \\
 & \simeq\{G,({\rm colim}(\Qc jH)(F)\}_h
\end{align*}
\end{proof}

Finally, the next lemma implies that $\lambda$ is a degree of homotopy locally presentability for $\mathfrak{hMod}^{ps}_\T$.

\begin{lemma}
\label{ultimo}
Representable 2-functors on $\T$ 
are homotopy $\lambda$-presentable objects of $\mathfrak{hMod}^{ps}_\T$. The full 2-subcategory of $\mathfrak{hMod}^{ps}_\T$ spanned by $\lambda$-small homotopy 2-colimits of representable models can be taken for the 2-subcategory $\Ak$ in definition \ref{defhlp}.
\end{lemma}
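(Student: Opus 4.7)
The plan for the first claim is to combine the enriched Yoneda lemma with the pointwise description of homotopy $\lambda$-filtered colimits supplied by Lemma \ref{hofiltcolim}. Since the representable $\T(\CC,-)$ is flexible (cf.\ \cite[4.6]{BKPS}) and since $\mathfrak{hMod}^{ps}_\T$ is a full sub-2-category of $\Pro s(\Gk,\mathfrak{Cat})$, the Yoneda equivalence
\[
\mathfrak{hMod}^{ps}_\T(\T(\CC,-),\MM)\simeq\Pro s(\Gk,\mathfrak{Cat})(\T(\CC,-),\MM)\simeq\MM(\CC)
\]
holds for every homotopy model $\MM$, naturally in $\MM$. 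Given a 2-functor $H:\II\to\mathfrak{hMod}^{ps}_\T$ with $\II$ a $\lambda$-filtered 2-category, Lemma \ref{hofiltcolim} gives $\mathrm{hocolim}\,H\simeq\mathrm{hocolim}\,jH$ computed pointwise via $\Qc$ in $[\Gk,\mathfrak{Cat}]$, so evaluation at $\CC$ commutes with the homotopy $\lambda$-filtered colimit and Yoneda then yields the required equivalence $\mathfrak{hMod}^{ps}_\T(\T(\CC,-),\mathrm{hocolim}\,H)\simeq\mathrm{hocolim}\,\mathfrak{hMod}^{ps}_\T(\T(\CC,-),H(-))$.

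For the second claim, the first thing to verify is that the 2-subcategory $\Ak$ of $\lambda$-small homotopy 2-colimits of representables is small up to equivalence: this follows because $\Gk$ is small, and the diagrams and weights defining $\lambda$-small homotopy 2-colimits form, up to isomorphism, a set, since a $\lambda$-small weight $G:\E\to\mathfrak{Cat}$ has $\E$ and the $G(E)$ bounded by $\lambda$. Next, every object of $\Ak$ must be shown to be homotopy $\lambda$-presentable: by the homotopy analogue of \cite[3.2]{BQR} (which is the formal consequence of the commutation of $\lambda$-small homotopy 2-limits with $\lambda$-filtered homotopy 2-colimits, \cite[6.10]{LR}), $\lambda$-small homotopy 2-colimits of homotopy $\lambda$-presentable objects are homotopy $\lambda$-presentable, and the first claim supplies the base case.

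The remaining ingredient is the density statement required by Definition \ref{defhlp}, namely that the 2-functor
\[
\Qc\circ\mathfrak{hMod}^{ps}_\T(i,-):\mathfrak{hMod}^{ps}_\T\longrightarrow[\Ak^{op},\mathfrak{Cat}]
\]
is a local equivalence. The strategy I would follow is to transport the problem, via the biequivalences of Corollary \ref{nn}, to the homotopy orthogonal subcategory ${\rm Int}[\Gk,\mathfrak{Cat}]_{\Qc(\Sigma)}$ of the combinatorial model 2-category $[\Gk,\mathfrak{Cat}]$. There one may invoke Theorem \ref{lrth} (the first direction, independent of Vop\v{e}nka's principle), which identifies left Bousfield localizations of presheaf 2-categories as the source of strongly homotopy locally presentable 2-categories, and track the size of the generators: since the localising set $\Qc(\Sigma)$ has domains and codomains that are $\lambda$-small homotopy 2-colimits of representables (by construction of the cones in $\Pro$, all bounded by $\lambda$), the generators of the localisation lie in $\Ak$, yielding both the density of $\Ak$ and the $\lambda$-presentability level of $\mathfrak{hMod}^{ps}_\T$.

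The principal obstacle I anticipate is the bookkeeping required to confirm that the reflection $r:\Pro s(\Gk,\mathfrak{Cat})\to\mathfrak{hMod}^{ps}_\T$ of remark \ref{lop} carries each representable $\T(\CC,-)$ to an object of $\Ak$, that is, that the reflection can be expressed as a $\lambda$-small homotopy 2-colimit of representables indexed by the cones of $\Pro$. This is essentially the content of the small object argument applied to $\Qc(\Sigma)$, and, once it is in place, the rest of the argument is a routine propagation of the pointwise computation of $\lambda$-filtered homotopy colimits and the commutation of $\lambda$-small homotopy limits with these, exactly as in the ordinary enriched theory.
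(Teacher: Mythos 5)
Your first claim is handled exactly as in the paper: the bicategorical Yoneda lemma together with the pointwise computation of homotopy $\lambda$-filtered colimits from lemma \ref{hofiltcolim} is precisely the paper's argument, and your use of closure of homotopy $\lambda$-presentable objects under $\lambda$-small weighted homotopy 2-colimits (the paper invokes \cite[9.5]{LR} for this) is also in line with what is done later for theorem \ref{main7}.

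The gap is in the density part of the second claim. Invoking theorem \ref{lrth} only yields strong homotopy local presentability for \emph{some} cardinal and for \emph{some} small subcategory; the whole content of the lemma is the identification of the degree $\lambda$ and of $\Ak$, and your ``tracking of generators'' is exactly the step you leave unresolved, deferring it to the anticipated obstacle about expressing $r(\T(\CC,-))$ as a $\lambda$-small homotopy colimit via a small object argument for $\Qc(\Sigma)$. That obstacle is in fact a non-issue in the paper's setup: after replacing $\Sk$ by the realized sketch $\T$ (remark \ref{lop}(4)), the representables $\T(\CC,-)$ are already homotopy models, so the reflection fixes them up to equivalence and no small object argument is needed. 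What is actually required for density is different, and the paper obtains it more elementarily: the strict presheaf 2-category $[\T,\mathfrak{Cat}]$ is an ordinary locally $\lambda$-presentable 2-category with the representables as generators, so every object is a $\lambda$-filtered colimit of $\lambda$-small colimits of representables; by remark \ref{lop}(3) this decomposition computes the corresponding homotopy colimits in ${\rm Int}[\T,\mathfrak{Cat}]$ (using \cite[9.8]{LR} for the presentability transfer), and then the homotopy reflection $r$ of remark \ref{lop}(1), which preserves weighted homotopy 2-colimits, together with lemma \ref{hofiltcolim}, carries the decomposition into $\mathfrak{hMod}^{ps}_\T$, exhibiting every homotopy model as a homotopy $\lambda$-filtered colimit of objects of $\Ak$ (the strong form of definition \ref{defhlp}, which implies the local-equivalence form you aim at). As written, your argument does not establish this, so you should either carry out the generator-tracking in the Bousfield-localization picture in detail or switch to the reflection argument above.
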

\begin{proof}
By lemma \ref{hofiltcolim} and by the Yoneda lemma for bicategories, representable 2-functors are homotopy $\lambda$-presentable objects of $\mathfrak{hMod}^{ps}_\T$.

Since representable models are cofibrant, we can view them as 2-functors in $[\T,\mathfrak{Cat}]$. From the proof of \cite[9.8]{LR}, we see that 2-functors which are $\lambda$-presentable in $[\T,\mathfrak{Cat}]$ are homotopy $\lambda$-presentable in ${\rm Int}[\T,\mathfrak{Cat}]$. Since $[\T,\mathfrak{Cat}]$ is locally $\lambda$-presentable and representable 2-functors form a set of generators, then every object of $[\T,\mathfrak{Cat}]$ is a $\lambda$-filtered colimit of $\lambda$-small colimits of representables. Therefore, by (3) in \ref{lop}, the full 2-subcategory of ${\rm Int}[\T,\mathfrak{Cat}]$ spanned by $\lambda$-small homotopy 2-colimits of representable models can be taken as $\Ak$ in definition \ref{defhlp} for the homotopy $\lambda$-presentable 2-category ${\rm Int}[\T,\mathfrak{Cat}]$.




By (1) in remark \ref{lop} and lemma \ref{hofiltcolim}, every object of $\mathfrak{hMod}^{ps}_\T$ is a $\lambda$-filtered homotopy colimit of $\lambda$-small homotopy colimits of representables.
\end{proof}

\begin{corollary}
\label{opls1}
$\mathfrak{hMod}^{ps}_\T$ is a homotopy locally $\lambda$-presentable 2-category, where $\lambda$ is a regular cardinal bounding the size of every category in $\mathfrak{Dia}$.
\end{corollary}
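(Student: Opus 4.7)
The plan is to verify Definition \ref{defhlp} directly, using $\Ak$ as suggested by Lemma \ref{ultimo}: take $\Ak$ to be the full 2-subcategory of $\mathfrak{hMod}^{ps}_\T$ spanned by $\lambda$-small weighted homotopy 2-colimits of representable 2-functors $\T(\CC,-)$. Since representables are cofibrant and homotopy $\lambda$-presentable (by Lemma \ref{ultimo}, via Lemma \ref{hofiltcolim} and the enriched Yoneda lemma for bicategories), and since $\lambda$-small weighted homotopy 2-colimits of homotopy $\lambda$-presentable objects are again homotopy $\lambda$-presentable --- this is the homotopy analogue of the standard fact, and follows formally from the commutation of $\lambda$-small weighted homotopy 2-limits with homotopy $\lambda$-filtered colimits in $\mathfrak{Cat}$ (\cite[6.10]{LR}) --- every object of $\Ak$ is homotopy $\lambda$-presentable. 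Moreover $\Ak$ is small, because $\T$ is small and there is a set of $\lambda$-small weights.

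Next, I would establish that every object of $\mathfrak{hMod}^{ps}_\T$ is a homotopy $\lambda$-filtered colimit of objects of $\Ak$. This is essentially contained in Lemma \ref{ultimo}: in $[\T,\mathfrak{Cat}]$, which is locally $\lambda$-presentable with representables as $\lambda$-presentable generators, every object is a $\lambda$-filtered colimit of $\lambda$-small colimits of representables; this transfers to ${\rm Int}[\T,\mathfrak{Cat}]$ via cofibrant replacement and item (3) of Remark \ref{lop}, and then transfers to $\mathfrak{hMod}^{ps}_\T$ using the homotopy reflection of Remark \ref{lop}(1) together with Lemma \ref{hofiltcolim}, which ensures that homotopy $\lambda$-filtered colimits in $\mathfrak{hMod}^{ps}_\T$ are computed as in $\Pro s(\Gk,\mathfrak{Cat})$. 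Thus $\Ak$ exhibits $\mathfrak{hMod}^{ps}_\T$ as strongly homotopy locally $\lambda$-presentable.

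Finally, I would invoke \cite[9.7]{LR} to conclude that strong homotopy local $\lambda$-presentability implies homotopy local $\lambda$-presentability, which is the statement of the corollary.

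The main obstacle, already absorbed into Lemma \ref{ultimo} and Lemma \ref{hofiltcolim}, is to ensure that the passage from the classical 2-functor category $[\T,\mathfrak{Cat}]$ to $\mathfrak{hMod}^{ps}_\T$ preserves the degree of presentability $\lambda$. The key ingredient is that the cones in the realized sketch $\T$ are $\lambda$-small, which is precisely where the choice of $\lambda$ bounding the size of every category in $\mathfrak{Dia}$ is used; this forces homotopy $\lambda$-filtered colimits to be computed pointwise and to commute with the defining homotopy limit cones, making $\mathfrak{hMod}^{ps}_\T$ closed under such colimits inside $\Pro s(\Gk,\mathfrak{Cat})$.
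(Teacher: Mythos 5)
Your proposal is correct and follows essentially the same route as the paper: representables are homotopy $\lambda$-presentable via Lemma \ref{hofiltcolim} and the bicategorical Yoneda lemma, the subcategory $\Ak$ of $\lambda$-small homotopy 2-colimits of representables is obtained by transferring the decomposition in $[\T,\mathfrak{Cat}]$ through remark \ref{lop} and the homotopy reflection, and strong homotopy local $\lambda$-presentability then gives the statement by \cite[9.7]{LR}. The only cosmetic difference is that you justify closure of homotopy $\lambda$-presentables under $\lambda$-small homotopy colimits by a commutation argument, where the paper simply invokes \cite[9.5]{LR}.
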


\section{Small presentation}
\label{finalres}

In this section we identify representable models for $\T$ with a precise type of derivator, and we prove, via the biequivalence in \ref{Main}, that Renaudin's derivators of small presentation are $\lambda$-presentable objects.

\subsection{Representable models}

Denote by $sSet$ the category of simplicial sets with its classical model structure 
and by $sSet^{\CC^{op}}$ the category of simplicial presheaves endowed with the projective model structure.
Recall that $\Phi$ denotes the pseudo-functor of theorem \ref{cisinski}. The following result is due to Cisinski (see \cite[3.24]{C2}). 

\begin{theorem}
\label{cis}
For every right derivator $\D$ and every small category $\CC$ in $\mathfrak{Dia}$ there is an equivalence of categories 
\begin{displaymath}
\mathfrak{Der}^r(\Phi(sSet^{\CC^{op}}),\D)
\simeq\D(\CC).
\end{displaymath}
\end{theorem}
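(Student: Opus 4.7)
The plan is to build an evaluation functor
\[
\mathrm{ev}_\CC : \mathfrak{Der}^r(\Phi(sSet^{\CC^{op}}), \D) \longrightarrow \D(\CC), \qquad \theta \longmapsto \theta_\CC(h_\CC),
\]
and show it is an equivalence of categories, where $h_\CC$ denotes the canonical tautological object of $\Phi(sSet^{\CC^{op}})(\CC) = \mathrm{Ho}[\CC^{op}, sSet^{\CC^{op}}]$ attached to the (simplicial) Yoneda embedding of $\CC$ into $sSet^{\CC^{op}}$. The functor $\mathrm{ev}_\CC$ is defined unambiguously by sending a cocontinuous pseudo-natural transformation $\theta$ to $\theta_\CC(h_\CC)$ and a modification $\lambda$ to the morphism $\lambda_\CC$ evaluated at $h_\CC$; that this is functorial is immediate.

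The harder direction is producing a quasi-inverse $\Psi: \D(\CC) \to \mathfrak{Der}^r(\Phi(sSet^{\CC^{op}}), \D)$. The key input is that $sSet^{\CC^{op}}$ is the \emph{free simplicial model category} on $\CC$ in the sense of Dugger \cite{D2}: every simplicial presheaf on $\CC$ is a canonical homotopy colimit of representables via the standard bar resolution, and the same holds for every diagram $\DD^{op} \to sSet^{\CC^{op}}$. Given $X \in \D(\CC)$ and a functor $u:\DD \to \CC$ in $\mathfrak{Dia}$, I would define $\Psi(X)_\DD$ on the representable diagram encoded by $u$ to be $u^{\ast}(X) \in \D(\DD)$, and then extend the assignment to an arbitrary object of $\Phi(sSet^{\CC^{op}})(\DD)$ by homotopy left Kan extension, using the homological direct image functors $u_!$ provided by axiom~(3) of a right derivator.

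The third step is the technical heart: one must verify that the collection $\{\Psi(X)_\DD\}_{\DD}$ assembles into a cocontinuous pseudo-natural transformation. Pseudo-naturality and coherence of the isomorphisms $\beta^{\Psi(X)}_u$ follow from functoriality of the bar resolution combined with the uniqueness up to isomorphism of homotopy Kan extensions. Invertibility of the Beck--Chevalley transforms $\beta^{\Psi(X)}_{u_!}$ reduces, via the coend description of $u_!$ in $\Phi(sSet^{\CC^{op}})$, to the commutation of homotopy left Kan extensions with homotopy colimits in $\D$; this is a consequence of axioms~(2) and~(4) for right derivators. The identity $\mathrm{ev}_\CC \circ \Psi \cong \mathrm{id}_{\D(\CC)}$ is built into the construction, while $\Psi \circ \mathrm{ev}_\CC \cong \mathrm{id}$ follows from the observation that any cocontinuous morphism is determined up to canonical isomorphism by its value on $h_\CC$, since every object of every $\Phi(sSet^{\CC^{op}})(\DD)$ is a canonical homotopy colimit of pull-backs of $h_\CC$.

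The main obstacle, as one might expect, is the third step: the careful bookkeeping of all the structure isomorphisms of $\Psi(X)$ and of the Beck--Chevalley conditions, performed in terms of the explicit coend or bar model for the homological direct image functors in $\Phi(sSet^{\CC^{op}})$. This is precisely the content of Cisinski's argument in \cite[3.24]{C2}; all other parts of the proof are formal Yoneda-style reductions.
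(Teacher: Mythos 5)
Your proposal is correct and matches what the paper itself does: the paper does not reprove this statement but attributes it to Cisinski \cite[3.24]{C2}, merely recalling the two mutually quasi-inverse functors, namely evaluation of a cocontinuous morphism at the tautological Yoneda object of $\F(\CC)(\CC)$ (your $\mathrm{ev}_\CC$, the paper's $\Psi$) and the assignment $h\mapsto\bigl(g\mapsto\pi(g)_{!}\varpi(g)^\ast(h)\bigr)$ built from the Grothendieck construction, which is exactly the homotopy left Kan extension you describe via bar resolutions (the paper's $\Xi$). Like you, the paper defers the verification that these are well defined, cocontinuous and mutually inverse to Cisinski's argument, so your sketch is essentially the same proof in a slightly different (simplicial presheaf rather than $\CC at$-valued) presentation.
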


Before outlining how the equivalence in theorem \ref{cis} is constructed, we rewrite it as follows. 
Setting $\F(\CC)=\Phi(sSet^{\CC^{op}})$, we have
\begin{equation}
\label{ciseqq}
\Psi:\mathfrak{Der}^r(\F(\CC),\D)
\simeq\D(\CC):\Xi.
\end{equation}


%


\begin{remark}
\label{catsset}
\rm Consider the morphism of localizers
\begin{displaymath}
N:(\CC at,W_\infty)\longrightarrow(sSet,W_{sSet}),
\end{displaymath}
where $N:\CC at\rightarrow sSet$ is the nerve and $W_{sSet}$ is the class of weak-equivalences of $sSet$ and $W_\infty=N^{-1}W_{sSet}$. This morphism induces an equivalence between the associated derivators, namely, $\Hc ot_\CC=[-,\CC at^{\CC^{op}}][W^{-1}_\infty]$ and $\F(\CC)$. 
In view of this, we will use the notation $\F(\CC)$ also for $\Hc ot_\CC$. We refer to \cite[1.1]{C2} for more details. 
\end{remark}

We recall now from \cite[3.18]{C2} and \cite{C3}
how equivalence (\ref{ciseqq}) is constructed. 
We describe first the functor
\begin{displaymath}
\Xi:\D(\CC)\longrightarrow\mathfrak{Der}^r(\F(\CC),\D).
\end{displaymath}
For every $h\in\D(\CC)$, we indicate how the pseudo-natural transformation $\Xi(h):\F(\CC)\Rightarrow\D$ is defined, by giving the functors $\Xi(h)_\DD$, for every object $\DD\in\mathfrak{Dia}$, and referring then to \cite{C2} for the rest. 
For $g\in\F(\CC)(\DD)$, let $\nabla g$ and $\int g$ be the Grothendieck fibration and cofibration associated to $g:\DD\times\CC^{op}\rightarrow\CC at$, by fixing $C\in\CC^{op}$ and $D\in\DD$ respectively. Let $\pi(g):\nabla\int g\rightarrow\DD$ and $\varpi(g):\nabla\int g\rightarrow\CC^{op}$ be the projections:  
\begin{displaymath}
\xymatrix{
 & \nabla\int(g) \ar[dl]_{\pi(g)} \ar[dr]^{\varpi(g)} & \\
\DD & & \CC
}
\end{displaymath}
Applying $\D$, we obtain the diagram 
\begin{displaymath}
\xymatrix{
 & \D(\nabla\int(g)) \ar@<-.5ex>[dl]_{\pi(g)_{!}} \ar@<.5ex>[dr]^{\varpi(g)_{!}} & \\
\D(\DD) \ar@<-.5ex>[ur]_{\pi(g)^\ast} & & \D(\CC) \ar@<.5ex>[ul]^{\varpi(g)^\ast}
}
\end{displaymath}
The functor $\Xi(h)_\DD:\F(\CC)(\DD)\rightarrow\D(\DD)$ is defined on objects $g\in\F(\CC)(\DD)$ as
\begin{equation}
\label{cisform}
\pi(g)_{!}\varpi(g)^\ast(h).
\end{equation}
The action of $\Xi(h)_\DD$ on morphisms is as follows: for $\alpha:g\rightarrow g'$ in $\F(\CC)(\DD)$, we set $\beta=\nabla\int\alpha$, yielding in $\mathfrak{Dia}$ the commutative diagram
\begin{displaymath}
\xymatrix{
 & \nabla\int(g)  \ar[dl]_{\pi(g)} \ar[d]^\beta \ar[dr]^{\varpi(g)} & \\
\DD & \nabla\int(g') \ar[l]^(.6){\pi(g')} \ar[r]_(.65){\varpi(g')} & \CC,
}
\end{displaymath}
$\Xi(h)_\DD(\alpha)$ is now defined as the composite
\begin{equation*}
\pi(g)_{!}\varpi(g)^\ast(h)\cong\pi(g')_{!}\beta_!\beta^\ast\varpi(g')^\ast(h)\longrightarrow\pi(g')_{!}\varpi(g')^\ast(h)
\end{equation*}
We refer to \cite[3.19]{C2} to complete the definition of $\Xi(h)$.

We now consider the other functor in (\ref{ciseqq})
\begin{displaymath}
\Psi:\mathfrak{Der}^r(\F(\CC),\D)\longrightarrow\D(\CC). 
\end{displaymath}
As explained in remark \ref{catsset}, we can view the Yoneda embedding $\Y:\CC\rightarrow\CC at^{\CC^{op}}$ as an object of $\F(\CC)(\CC)$. Any 1-morphism of derivators $\theta:\F(\CC)\rightarrow\D$, when computed at $\CC$, yields a functor $\theta_\CC:\F(\CC)(\CC)\rightarrow\D(\CC)$, whose value $\theta_\CC(\Y)$ at $\Y$ defines $\Psi(\theta)$.

We establish now a correspondence between 
derivators $\F(\CC)=\Phi(sSet^{\CC^{op}})$ and representable models $\T(\CC,-)$ of the homotopy limit 
2-sketch $\T$.   

\begin{proposition}
\label{repf}
For every $\CC\in\mathfrak{Dia}$, the derivator $\Upsilon(\T(\CC,-))$ corresponding to the representable model $\T(\CC,-)$ is equivalent in $\mathfrak{Der}^r$ to  
$\F(\CC)$.
\end{proposition}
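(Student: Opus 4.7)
The plan is to show that both $\F(\CC)$ and $\Upsilon(\T(\CC,-))$ corepresent, up to pseudo-natural equivalence, the same 2-functor $\mathfrak{Der}^r \to \mathfrak{Cat}$, namely $\D\mapsto\D(\CC)$, and then to invoke the bicategorical Yoneda lemma to conclude.

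First I would apply the bicategorical Yoneda lemma in $\T$ to the representable model $\T(\CC,-)$: since representables are flexible (so pseudo-natural transformations out of them are classified by their value at $\CC$), and since $\T(\CC,-)$ is itself a homotopy model (as $\T$ is realized, see remark \ref{lop}(4)), there is an equivalence
\begin{equation*}
\mathfrak{hMod}^{ps}_\T(\T(\CC,-),\MM)\simeq\MM(\CC)
\end{equation*}
pseudo-natural in $\MM\in\mathfrak{hMod}^{ps}_\T$. Transporting along the biequivalence $\Upsilon:\mathfrak{hMod}^{ps}_\T\to\mathfrak{Der}^r$ from Theorem \ref{Main}, and using that $\Upsilon$ is the restriction along $\mathfrak{Dia}^{op}\hookrightarrow\Gk$ (so that $\Upsilon(\MM)(\CC)=\MM(\CC)$), this becomes a pseudo-natural equivalence
\begin{equation*}
\mathfrak{Der}^r(\Upsilon(\T(\CC,-)),\D)\simeq\D(\CC)
\end{equation*}
for every right derivator $\D$.

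Next I would invoke Cisinski's theorem \ref{cis}, which provides the symmetric pseudo-natural equivalence $\mathfrak{Der}^r(\F(\CC),\D)\simeq\D(\CC)$, with the explicit quasi-inverses $\Psi$ and $\Xi$ recalled above. Combining the two yields a pseudo-natural equivalence
\begin{equation*}
\mathfrak{Der}^r(\F(\CC),\D)\simeq\mathfrak{Der}^r(\Upsilon(\T(\CC,-)),\D),
\end{equation*}
and the bicategorical Yoneda lemma then gives the desired equivalence $\F(\CC)\simeq\Upsilon(\T(\CC,-))$ in $\mathfrak{Der}^r$. The equivalence can be made explicit: evaluating Cisinski's equivalence at $\D=\Upsilon(\T(\CC,-))$ and applying $\Xi$ to the identity $1_\CC\in\T(\CC,\CC)=\Upsilon(\T(\CC,-))(\CC)$ produces, via formula \eqref{cisform}, a 1-morphism $\F(\CC)\to\Upsilon(\T(\CC,-))$; dually, applying the Yoneda equivalence for models at $\MM=\Upsilon^{-1}(\F(\CC))$ to the element $\Y\in\F(\CC)(\CC)$ produces a 1-morphism in the reverse direction.

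The main obstacle will be verifying that the two identifications $\D(\CC)\simeq\mathfrak{Der}^r(\F(\CC),\D)$ and $\D(\CC)\simeq\mathfrak{Der}^r(\Upsilon(\T(\CC,-)),\D)$ are genuinely pseudo-natural in $\D$, since the bi-Yoneda deduction requires this and neither naturality statement is entirely formal; for Cisinski's side it is essentially contained in the construction of $\Xi$ and $\Psi$ in \cite{C2, C3}, while for the model side it follows from the standard naturality of the bicategorical Yoneda isomorphism together with the fact that $\Upsilon$ is a strict 2-functor on the nose on mapping categories of the form $\mathfrak{hMod}^{ps}_\T(\T(\CC,-),-)$. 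Once these naturalities are in place, the conclusion is formal and the two quasi-inverse 1-morphisms described above witness the equivalence in $\mathfrak{Der}^r$.
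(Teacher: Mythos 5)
Your argument is correct in substance, but it follows a genuinely different route from the paper. You prove that $\F(\CC)$ and $\Upsilon(\T(\CC,-))$ corepresent, pseudo-naturally in $\D$, the same 2-functor $\D\mapsto\D(\CC)$ (Cisinski's theorem \ref{cis} on one side, the bicategorical Yoneda lemma transported along the biequivalence $\Upsilon$ on the other), and then conclude by bi-Yoneda. The paper instead constructs the two comparison morphisms explicitly -- $\varphi=\Xi(1_\CC):\F(\CC)\Rightarrow\T(\CC,-)$ from Cisinski's equivalence at $\D=\T(\CC,-)$, and $\psi=\Pi(\Y):\T(\CC,-)\Rightarrow\Omega(\F(\CC))$ from the Yoneda equivalence for models -- and verifies directly that both composites are isomorphic to identities, computing $\varphi_\DD(g)\cong\pi(g)_{(!)}\varpi(g)$ from formula \eqref{cisform} and invoking Cisinski's lemma \cite[3.22]{C2} to identify $\pi(\Y)_{(!)}\varpi(\Y)$ with $1_\CC$; the naturality-in-$\D$ question never arises there, since one only compares images of specific elements under the two pointwise equivalences. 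What your approach buys is formality: no computation with the Grothendieck (co)fibrations $\nabla\int g$ and no appeal to \cite[3.22]{C2}, at the price of the obstacle you yourself flag, namely establishing pseudo-naturality in $\D$ of both equivalences. That gap is real but minor: the evaluation functors $\theta\mapsto\theta_\CC(\Y)$ and $\theta\mapsto\theta_\CC(1_\CC)$ (the latter transported along $\Upsilon$ and a chosen pseudo-inverse) are strictly 2-natural, and a 2-natural family of pointwise equivalences assembles into a pseudo-natural equivalence, so your deduction via bi-Yoneda does go through; the paper's explicit morphisms $\varphi$ and $\psi$ are exactly the ones your recipe produces, which makes the two proofs agree on the underlying comparison maps while differing in how the quasi-inverse property is certified.
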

\begin{proof}
For the way $\Upsilon$ is defined, the derivator $\Upsilon(\T(\CC,-))$ will be reasonably denoted $\T(\CC,-)$. 

On the one hand, equivalence (\ref{ciseqq}) for $\D=\T(\CC,-)$ becomes
\begin{equation}
\label{ciseq2}
\Psi:\mathfrak{Der}^r(\F(\CC),\T(\CC,-))\rightleftarrows\T(\CC,\CC):\Xi.
\end{equation}
Noting that the category $\T(\CC,\CC)$ has $\mathfrak{Dia}^{op}(\CC,\CC)=[\CC,\CC]$ as subcategory, let
\begin{equation*}
\varphi:\F(\CC)\Rightarrow\T(\CC,-)
\end{equation*}
be the 1-morphism of derivators $\Xi(1_{\CC})$.

On the other hand, by the Yoneda lemma for bicategories (see \cite[1.9]{St}) there is an equivalence of categories
\begin{equation}
\label{yoneq}
\Lambda:\mathfrak{hMod}^{ps}_\T(\T(\CC,-),\Omega(\F(\CC)))\rightleftarrows\Omega(\F(\CC))(\CC):\Pi,
\end{equation}
where $\Omega(\F(\CC)$ is any homotopy model such that $\Upsilon\Omega(\F(\CC))\simeq\F(\CC)$ (such models are all equivalent), and, again, we will denote the derivator $\Upsilon\Omega(\F(\CC))$ simply as $\Omega(\F(\CC))$. 
Consider the Yoneda embedding $\Y:\CC\rightarrow\CC at^{\CC^{op}}$ as an object of $\F(\CC)(\CC)$ and, by means of the equivalence above, as an element, which we denote again $\Y$, of $\Omega(\F(\CC))$. Let
\begin{equation*} 
\psi:\T(\CC,-)\Rightarrow\Omega(\F(\CC))
\end{equation*}
be the 1-morphism of models $\Pi(\Y)$: 
for $\DD\in\Gk$ and $g\in\T(\CC,\DD)$
\begin{equation*}
\psi_\DD(g)=\Omega(\F(\CC))(g)(\Y),
\end{equation*}
particularly, when $g:\CC\rightarrow\DD$ is a morphism in $\Gk$ corresponding to some $g:\DD\rightarrow\CC$ in $\mathfrak{Dia}$, then $\psi_\DD(g)=\Y\circ g$. We write $\psi$ also for the morphism of derivators $\Upsilon(\psi)$, and, by the equivalence $\Omega(\F(\CC)\simeq\F(\CC)$, we have $\psi_\DD(g)\cong\Y\circ g$, for $g$ in $\mathfrak{Dia}$.

To prove the lemma we show there are isomorphic modifications $\varphi\circ\psi\Rrightarrow1_{\T(\CC,-)}$ and $\psi\circ\varphi\Rrightarrow1_{\F(\CC)}$.

Formula (\ref{cisform}), for $\DD\in\Gk$ and $g\in\F(\CC)(\DD)$, yields
\begin{equation*}
\varphi_\DD(g)=\T(\CC,\pi(g))_!\T(\CC,\varpi(g))(1_{\CC}),
\end{equation*}
which we visualize in the diagram
\begin{displaymath}
\xymatrix{
 & \T(\CC,\nabla\int(g)) \ar@<-.5ex>[dl]_{\T(\CC,\pi(g)_{(!)})~{}~} \ar@<.5ex>[dr]^{~{}~\T(\CC,\varpi(g)_{(!)})} & \\
\T(\CC,\DD) \ar@<-.5ex>[ur]_{~{}~\T(\CC,\pi(g))} & & \T(\CC,\CC), \ar@<.5ex>[ul]^{\T(\CC,\varpi(g))~{}~}
}
\end{displaymath}
where note that $\T(\CC,\pi(g))_!$ denotes a left adjoint to $\T(\CC,\pi(g))$, and that, viewing $\T(\CC,-)$ as model, $\T(\CC,\pi(g))_!$ equals $\T(\CC,\pi(g)_{(!)})$ up to isomorphism; analogous considerations hold for $\T(\CC,\varpi(g))$ and $\T(\CC,\varpi(g)_{(!)})$.
Notice also that $\T(\CC,\varpi(g))$ acts by composing in $\Gk$ with the projection $\varpi(g)$,
so $\T(\CC,\varpi(g))(1_{\CC})=\varpi(g)$. Similarly $\T(\CC,\pi(g)_{(!)})$ acts by composing in $\Gk$ with $\pi(g)_{(!)}$, therefore 
\begin{equation*}
\varphi_\DD(g)=\pi(g)_{(!)}\varpi(g).
\end{equation*}

As a consequence we find out that
\begin{align*}
\psi\circ\varphi & =\Omega(\F(\CC))(\varphi(-))(\Y) \\
 & =\Omega(\F(\CC))(\pi(-)_{(!)}\varpi(-))(\Y) \\
 & =\Omega(\F(\CC))(\pi(-)_{(!)})\Omega(\F(\CC))(\varpi(-))(\Y).
\end{align*}
So, 
by the equivalence (\ref{ciseqq}), particularly \eqref{cisform}, for $\D=\F(\CC)$, observing the diagram  
\begin{displaymath}
\xymatrix{
 & \F(\CC)(\nabla\int(g)) \ar@<-.5ex>[dl]_{\F(\CC)(\pi(g))_!~{}~{}~} \ar@<.5ex>[dr]^{~{}~{}~\F(\CC)(\varpi(g))_!} & \\
\F(\CC)(\DD) \ar@<-.5ex>[ur]_{~{}~\F(\CC)(\pi(g))} & & \F(\CC)(\CC) \ar@<.5ex>[ul]^{\F(\CC)(\varpi(g))~{}~}
}
\end{displaymath}
with $g\in\F(\CC)(\DD)$,
we see that $\psi\circ\varphi$ is isomorphic to $\Xi(\Y)$; 
on the other hand, the image of the identity $1_{\F(\CC)}\in\mathfrak{Der}^r(\F(\CC),\F(\CC))$ by $\Psi$ is $\Y$; so $\psi\circ\varphi$ and $1_{\F(\CC)}$ are isomorphic in $\mathfrak{Der}^r(\F(\CC),\F(\CC))$, that is, there is an isomorphic modification $\psi\circ\varphi\Rrightarrow1_{\F(\CC)}$. 

As to $\varphi\circ\psi$, observe that
\begin{align*}
\varphi\circ\psi & =\varphi(\Omega(\F(\CC))(-)(\Y)) \\
 & =\pi(\Omega(\F(\CC))(-)(\Y))_{(!)}\varpi(\Omega(\F(\CC))(-)(\Y)).
\end{align*}
The equivalence $\Lambda$ in (\ref{yoneq}) maps $\varphi\circ\psi:\T(\CC,-)\Rightarrow\T(\CC,-)$ to the object $\Lambda(\varphi\circ\psi)$ in $\T(\CC,\CC)$ obtained by computing $\varphi\circ\psi$ at $\CC$ and then evaluating at $1_\CC$:
\begin{equation*}
\pi(\Omega(\F(\CC))(1_\CC)(\Y))_{(!)}\varpi(\Omega(\F(\CC))(1_\CC)(\Y))=\pi(\Y)_{(!)}\varpi(\Y). 
\end{equation*}
This, by lemma 3.22 in \cite{C2}, is isomorphic to the identity $1_\CC$, providing an isomorphic modification $\varphi\circ\psi\Rrightarrow1_{\T(\CC,-)}$.
\end{proof}

As a consequence of lemma \ref{ultimo} and proposition \ref{repf} above we have the following result.

\begin{corollary}
\label{ultimo7}
Any right derivator is a homotopy $\lambda$-filtered colimit in $\mathfrak{Der}^r$ of $\lambda$-small homotopy 2-colimits of derivators of the form $\F(\CC)=\Phi(sSet^{\CC^{op}})$.
\end{corollary}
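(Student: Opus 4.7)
The plan is to transport the statement across the biequivalence $\Upsilon:\mathfrak{hMod}^{ps}_\T\to\mathfrak{Der}^r$ of Theorem \ref{Main}, so that it becomes a purely structural statement about the homotopy locally $\lambda$-presentable 2-category $\mathfrak{hMod}^{ps}_\T$, and then rewrite the representable building blocks in the form $\F(\CC)$ using Proposition \ref{repf}. Passing through $\Upsilon$ is legitimate because, by \cite[7.1]{LR}, biequivalences preserve and reflect weighted homotopy 2-(co)limits; in particular they preserve and reflect homotopy $\lambda$-filtered colimits and $\lambda$-small homotopy 2-colimits.

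First, given a right derivator $\D$, choose, via surjectivity of $\Upsilon$ on objects (\ref{omega0}), a homotopy model $\M\in\mathfrak{hMod}^{ps}_\T$ with $\Upsilon(\M)\simeq\D$. Here it is convenient to work with the realized homotopy limit 2-sketch $\T$ (see remark \ref{lop}(4)), so that the representable 2-functors $\T(\CC,-)$ lie in $\mathfrak{hMod}^{ps}_\T$. By Corollary \ref{opls1} together with Lemma \ref{ultimo}, $\mathfrak{hMod}^{ps}_\T$ is homotopy locally $\lambda$-presentable and the full 2-subcategory $\Ak$ of definition \ref{defhlp} may be taken to consist of $\lambda$-small homotopy 2-colimits of representable models $\T(\CC,-)$. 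By the definition of (strongly) homotopy locally $\lambda$-presentable, $\M$ may thus be written as a homotopy $\lambda$-filtered colimit
\begin{equation*}
\M\simeq\mathrm{hocolim}_{i\in\II}\, K_i
\end{equation*}
in $\mathfrak{hMod}^{ps}_\T$, where each $K_i$ is a $\lambda$-small homotopy 2-colimit in $\mathfrak{hMod}^{ps}_\T$ of representables $\T(\CC_{i,j},-)$.

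Next, I would apply $\Upsilon$ to this decomposition. Since $\Upsilon$ preserves both the homotopy $\lambda$-filtered colimit and the $\lambda$-small homotopy 2-colimits, we obtain in $\mathfrak{Der}^r$
\begin{equation*}
\D\simeq\Upsilon(\M)\simeq\mathrm{hocolim}_{i\in\II}\,\Upsilon(K_i),
\end{equation*}
with each $\Upsilon(K_i)$ a $\lambda$-small homotopy 2-colimit of the derivators $\Upsilon(\T(\CC_{i,j},-))$. Finally, Proposition \ref{repf} provides an equivalence $\Upsilon(\T(\CC,-))\simeq\F(\CC)$ in $\mathfrak{Der}^r$ for every $\CC\in\mathfrak{Dia}$, so the representable pieces can be replaced by derivators of the form $\F(\CC)=\Phi(sSet^{\CC^{op}})$, which yields the claimed decomposition.

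There is essentially no hard step left: every ingredient is already in place. The only point that requires a little care is to make sure that the class of weighted homotopy colimits being transported is exactly the one involved in the definition of $\Ak$, and that $\Upsilon$ is known to preserve them; both are covered by \cite[7.1]{LR} applied to the biequivalence of Theorem \ref{Main}, so the argument reduces to assembling the three cited results in the order above.
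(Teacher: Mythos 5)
Your proposal is correct and follows essentially the same route as the paper: the corollary is stated there as an immediate consequence of Lemma \ref{ultimo} (every homotopy model is a homotopy $\lambda$-filtered colimit of $\lambda$-small homotopy 2-colimits of representables $\T(\CC,-)$) and Proposition \ref{repf}, transported along the biequivalence $\Upsilon$ of Theorem \ref{Main}. Your explicit appeal to \cite[7.1]{LR} for the preservation of weighted homotopy colimits under biequivalences is exactly the point the paper relies on (remark \ref{lop}), so nothing is missing.
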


\subsection{Derivators of small presentation}
\label{finalresren}

Let $\mathfrak{ModQ}^c[\Qc^{-1}]$ be the pseudo-localization at Quillen equivalences $\Qc$ of the 2-category of combinatorial model categories $\mathfrak{ModQ}^c$, as in \cite[2.3]{R}. The following theorem, proved by Renaudin \cite[3.3.2]{R}, builds on Dugger's results on universal homotopy theories \cite{D1} and on presentations of combinatorial model categories \cite{D2}. 

\begin{theorem}
\label{ren1}
The pseudo-functor $\Phi$ induces a local equivalence 
\begin{equation*}
\tilde{\Phi}:\mathfrak{ModQ}^c[\Qc^{-1}]\longrightarrow\mathfrak{Der}_{ad}.
\end{equation*}
\end{theorem}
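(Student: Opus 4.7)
The plan is to follow Renaudin's strategy, which rests on combining Dugger's presentation theorem for combinatorial model categories with Cisinski's identification of morphisms out of $\Phi(sSet^{\CC^{op}})$ proved in Theorem \ref{cis}. Since local equivalence is a property of hom-categories, the goal reduces to showing, for every pair of combinatorial model categories $\M_1$ and $\M_2$, that the functor induced by $\Phi$
\begin{equation*}
\mathfrak{ModQ}^c[\Qc^{-1}](\M_1,\M_2)\longrightarrow\mathfrak{Der}_{ad}(\Phi(\M_1),\Phi(\M_2))
\end{equation*}
is an equivalence of categories, i.e.\ essentially surjective on 1-morphisms and fully faithful on 2-morphisms.

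First I would reduce to a canonical shape for the source. By Dugger's presentation theorem, every combinatorial $\M$ is Quillen equivalent to a left Bousfield localization $sSet^{\CC^{op}}_S$ of the projective model structure $sSet^{\CC^{op}}$ at a set $S$ of maps, for some small category $\CC$. Since $\Phi$ inverts Quillen equivalences by Theorem \ref{cisinski}, and since the pseudo-localization of $\mathfrak{ModQ}^c$ at Quillen equivalences identifies Quillen-equivalent model categories up to equivalence, one may assume $\M_i=sSet^{\CC_i^{op}}_{S_i}$. Moreover, left Quillen functors out of a Bousfield localization are precisely the left Quillen functors from the ambient $sSet^{\CC_1^{op}}$ that send the maps in $S_1$ to weak equivalences in $sSet^{\CC_2^{op}}_{S_2}$; so the hom-category on the model side is cut out from $\mathfrak{ModQ}^c(sSet^{\CC_1^{op}},sSet^{\CC_2^{op}}_{S_2})$ by a localization condition.

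Next, Dugger's universal property of $sSet^{\CC^{op}}$ identifies this last hom-category, up to natural equivalence, with $\mathfrak{Fun}(\CC_1,sSet^{\CC_2^{op}}_{S_2})$, i.e.\ with the image of $\CC_1$ in the homotopy category of $sSet^{\CC_2^{op}}_{S_2}$ (this is the part that classifies left Quillen functors by functors out of $\CC_1$). On the derivator side, Theorem \ref{cis} gives an equivalence $\mathfrak{Der}^r(\F(\CC_1),\D)\simeq\D(\CC_1)$, and the same description (restricted to morphisms compatible with localization at $S_1$) governs $\mathfrak{Der}_{ad}(\Phi(\M_1),\Phi(\M_2))$, because $\Phi(sSet^{\CC_i^{op}}_{S_i})$ is the full sub-derivator of $\F(\CC_i)$ cut out by the $S_i$-local objects. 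The pseudo-functor $\Phi$ sends a Quillen functor $sSet^{\CC_1^{op}}\to\M_2$ corresponding to $F:\CC_1\to\M_2$ to the morphism of derivators corresponding via Theorem \ref{cis} to the image of $F$ in $\Phi(\M_2)(\CC_1)$; this matches the two descriptions on the nose, yielding essential surjectivity on 1-morphisms.

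For the 2-morphisms, a natural transformation between two left Quillen functors $sSet^{\CC_1^{op}}\to\M_2$ is determined, again by Dugger, by a natural transformation of the associated functors $\CC_1\to\M_2$, equivalently a morphism in $\Phi(\M_2)(\CC_1)$ between the two classifying objects; the matching statement for modifications between pseudo-natural transformations $\Phi(\M_1)\Rightarrow\Phi(\M_2)$ follows from the naturality of the equivalence in Theorem \ref{cis}. Full faithfulness on 2-morphisms is then obtained by comparing these two descriptions term-by-term. The main obstacle is the compatibility with Bousfield localization: one has to verify that the $S_1$-locality condition matches up on both sides of the equivalence—on the model category side as a condition on left Quillen functors (sending $S_1$ to weak equivalences), on the derivator side as the condition that the classifying object in $\Phi(\M_2)(\CC_1)$ lies in the image of $\Phi(\M_1)(\CC_1)$. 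This verification uses the interplay between left Bousfield localization and the homological direct image functors of the associated derivators, which is precisely where Dugger's and Cisinski's results are essential.
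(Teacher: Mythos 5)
The paper does not actually prove Theorem \ref{ren1}: it is recalled from Renaudin \cite[3.3.2]{R}, so the only comparison available is with Renaudin's own argument, whose broad outline (Dugger's presentations of combinatorial model categories combined with Cisinski's computation of morphisms out of $\Phi(sSet^{\CC^{op}})$, Theorem \ref{cis}) your sketch does reproduce. But as a proof your sketch has two genuine gaps. First, you never identify the hom-categories of the pseudo-localization. You treat $\mathfrak{ModQ}^c[\Qc^{-1}](\M_1,\M_2)$ as if it were simply the category of left Quillen functors (subject to an $S_1$-locality condition) with their natural transformations, but pseudo-localization changes both 1-cells and 2-cells: a substantial part of Renaudin's paper is devoted to constructing $\mathfrak{ModQ}^c[\Qc^{-1}]$ and proving that its hom-categories are localizations of the hom-categories of $\mathfrak{ModQ}^c$ at the natural transformations that are pointwise weak equivalences on cofibrant objects. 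Without that identification the assertion that $\tilde{\Phi}$ is essentially surjective and fully faithful on a given hom-category cannot even be tested, and Dugger's correspondence between left Quillen functors $sSet^{\CC_1^{op}}\rightarrow\M_2$ and diagrams $\CC_1\rightarrow\M_2$ is itself only an equivalence after passing to such a localized hom-category, not an equivalence of the strict functor categories your sketch compares.

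Second, you apply Theorem \ref{cis}, which computes $\mathfrak{Der}^r(\F(\CC_1),\D)\simeq\D(\CC_1)$ for \emph{cocontinuous} morphisms, to hom-categories of $\mathfrak{Der}_{ad}$, whose 1-cells are morphisms of derivators with componentwise right adjoints. A componentwise left adjoint morphism is cocontinuous, but the converse is not formal; to run your argument one must show that, for the derivators $\Phi(\M)$ with $\M$ combinatorial, the equivalence of Theorem \ref{cis} restricts to an identification of $\mathfrak{Der}_{ad}(\Phi(\M_1),\Phi(\M_2))$ with the correct subcategory of $\Phi(\M_2)(\CC_1)$ (Renaudin needs an adjoint-functor-type statement at this point). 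Your sketch silently conflates the two hom-categories. The remaining issue you do flag yourself, namely the compatibility of the $S_1$-locality conditions on the two sides, is indeed where the universal property of left Bousfield localization of derivators (as in Proposition \ref{bolo}) enters, but flagging it is not the same as closing it: as written, the reduction of $\mathfrak{Der}_{ad}(\Phi(\M_1),\Phi(\M_2))$ to morphisms out of $\F(\CC_1)$ killing $S_1$ is asserted via a ``full sub-derivator of local objects'' picture rather than via the localization's universal property, and that step needs to be made precise.
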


Renaudin also describes the essential image of $\tilde{\Phi}$: it is formed by derivators of small presentation. 
We recall this result and the relevant definitions from \cite[3.4]{R}.

\begin{definition}
Given a prederivator $\D$, a localization of $\D$ is an adjunction $\theta:\D\rightleftarrows\D':\chi$ such that the counit $\epsilon:\theta\circ\chi\rightarrow 1_{\D'}$ is an isomorphism.
\end{definition}

Derivators are invariant under localization, in the sense that a localization of a derivator is again a derivator (see \cite[4.2]{C2}).

We recall now from \cite[3.4]{R} the concept of presentation in the case of derivators. The motivation comes from Dugger's definitions of homotopically surjective map (\cite[3.1]{D2}) and of presentation of a model category (\cite[1]{D2} or \cite[6.1]{D1}). 
We will observe an analogy between the definition
of a derivator of small presentation (generation) and the definition, by means of the free construction, of a finitely presented (generated) model of an algebraic theory or module over a ring (see for example \cite[3.8.1]{B}). This analogy relies on the use of ``generators" and ``relations".

\begin{definition}
\label{smgen}
A derivator $\D$ has small generation if there is a category $\CC\in\CC at$ and a localization $\F(\CC)\rightleftarrows\D$. 
\end{definition}


\begin{definition}
\label{smpres}
A derivator $\D$ has small presentation if it has a small generation
$\F(\CC)\rightleftarrows\D$ and there is a set $S$ 
of morphisms in $sSet^{\CC^{op}}$, such that the $S$-local equivalences coincide in $\F(\CC)(e)$ with the inverse image of the isomorphisms in $\D(e)$ by the induced functor $\F(\CC)(e)\rightarrow\D(e)$. In this case, we call the pair $(\CC,S)$ a small presentation for $\D$.
\end{definition}

Let $\mathfrak{Der}_{ad}^{fp}$ be the full 2-subcategory of $\mathfrak{Der}_{ad}$ spanned by derivators of small presentation. The next is the main result of \cite{R}.  

\begin{theorem}
There is a biequivalence $\mathfrak{ModQ}^c[\Qc^{-1}]\rightarrow\mathfrak{Der}_{ad}^{fp}$ induced by $\tilde{\Phi}$.
\end{theorem}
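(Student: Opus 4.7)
The plan is to upgrade theorem \ref{ren1} to a biequivalence by (a) showing that the essential image of $\tilde\Phi$ is contained in $\mathfrak{Der}_{ad}^{fp}$, and (b) showing that every derivator of small presentation lies, up to equivalence, in this image. Local equivalence on the subcategory $\mathfrak{Der}_{ad}^{fp}$ is then inherited from theorem \ref{ren1}, since $\mathfrak{Der}_{ad}^{fp}$ is a full 2-subcategory of $\mathfrak{Der}_{ad}$ and the source already factors through it.

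For (a), let $\M$ be a combinatorial model category. By Dugger's presentation theorem (\cite{D2}), $\M$ admits a presentation: there exist a small category $\CC$, a set $S$ of morphisms in $sSet^{\CC^{op}}$, and a Quillen equivalence $L_S sSet^{\CC^{op}} \simeq_Q \M$, where $L_S sSet^{\CC^{op}}$ denotes the left Bousfield localization at $S$. The identity-on-underlying-categories Quillen adjunction $sSet^{\CC^{op}} \rightleftarrows L_S sSet^{\CC^{op}}$ is a Quillen localization (its right derived right adjoint is fully faithful), and under $\Phi$ it produces an adjoint pair $\F(\CC) \rightleftarrows \Phi(L_S sSet^{\CC^{op}}) \simeq \Phi(\M)$ whose counit is invertible; this gives $\Phi(\M)$ a small generation in the sense of definition \ref{smgen}. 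Moreover, by construction of the Bousfield localization, a morphism in $\F(\CC)(e) = \mathrm{Ho}(sSet^{\CC^{op}})$ is sent to an isomorphism in $\Phi(\M)(e) = \mathrm{Ho}(\M)$ if and only if it becomes an $S$-local equivalence, so $(\CC,S)$ witnesses $\Phi(\M)$ as a derivator of small presentation (definition \ref{smpres}).

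For (b), fix $\D \in \mathfrak{Der}_{ad}^{fp}$ with small presentation $(\CC,S)$ and localization $\theta:\F(\CC)\rightleftarrows\D:\chi$. Set $\M := L_S sSet^{\CC^{op}}$, which is combinatorial as a left Bousfield localization of a combinatorial model category at a set of maps (\cite{D1}); by part (a), $\Phi(\M)$ has the same presentation $(\CC,S)$, via a localization $\theta':\F(\CC)\rightleftarrows\Phi(\M):\chi'$. By the universal property of Bousfield localization transported to the level of derivators (theorem \ref{cis} applied to $\D$ and $\Phi(\M)$, together with the fact that both $\theta$ and $\theta'$ invert exactly the $S$-local equivalences at the level of the underlying homotopy category, and then on every $\D(\DD)$ via axiom 2 and the Kan-extension description \eqref{cisform} of morphisms out of $\F(\CC)$), one obtains a canonical cocontinuous morphism $\Phi(\M) \to \D$ factoring $\theta$ through $\theta'$, together with a morphism in the opposite direction, and the adjunction yoga for the two reflective localizations of $\F(\CC)$ shows these are mutually inverse equivalences in $\mathfrak{Der}_{ad}$.

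The main obstacle is the comparison step in (b): making precise that two localizations of $\F(\CC)$ which invert the same class of morphisms in $\F(\CC)(e)$ are canonically equivalent as derivators, not merely as categories at the base point. This requires combining axiom 2 (conservativity), which reduces checking whether $\theta_\DD(f)$ is inverted to checking at each point of $\DD$, with Cisinski's theorem \ref{cis}, which identifies morphisms $\F(\CC) \to \D$ with objects of $\D(\CC)$ and hence pins down both $\theta$ and $\theta'$ by universal data; once this comparison is set up, the biequivalence follows by combining with theorem \ref{ren1}, which already supplies the local equivalence on hom-categories of $\mathfrak{ModQ}^c[\Qc^{-1}]$.
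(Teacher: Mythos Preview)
The paper does not prove this theorem itself; its entire proof is the citation ``See \cite[3.4.4]{R}.'' Your outline is essentially a reconstruction of Renaudin's argument, and the decomposition into (a) essential image contained in $\mathfrak{Der}_{ad}^{fp}$ via Dugger's presentation theorem, and (b) essential surjectivity via comparison of two localizations of $\F(\CC)$, is the correct strategy.

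Part (a) is fine as written. In part (b), your propagation argument is sound: by axiom 2 applied to $\D$ and $\Phi(\M)$, together with pseudo-naturality of $\theta$ and $\theta'$, the class of morphisms inverted at level $\DD$ is determined by the class inverted at level $e$, so $\theta_\DD$ and $\theta'_\DD$ invert the same morphisms for every $\DD$; then the standard fact that two reflective localizations of a category inverting the same class are canonically equivalent gives $\D(\DD)\simeq\Phi(\M)(\DD)$ levelwise, compatibly with the derivator structure. This is precisely the content of what the paper later records as proposition \ref{bolo} (Cisinski, via \cite[5.4]{T}). Your invocation of theorem \ref{cis} and formula \eqref{cisform} is not really needed for this step; the reflective-localization argument suffices on its own. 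One small point you should make explicit: the comparison morphisms $\theta'\chi$ and $\theta\chi'$ are a priori only morphisms of prederivators, but since they are equivalences their components are biadjoint to those of the inverse, so they lie in $\mathfrak{Der}_{ad}$ as required.
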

\begin{proof}
See \cite[3.4.4]{R}.
\end{proof}

As a consequence we see that a derivator has small presentation if and only if it is equivalent to a derivator of the form ${\Phi(sSet^{\CC^{op}}/S)}$, where $sSet^{\CC^{op}}/S$ denotes the left Bousfield localization of $sSet^{\CC^{op}}$ with respect to $S$.

For algebraic theories, an intrinsic definition of finitely presented model 
consists in requiring that 
the model represents a functor which preserves filtered colimits (see proposition \cite[3.8.14]{B}). 
A similar situation occurs with finitely presented modules over a ring. We would like to see if anything similar holds for derivators of small presentation. To this purpose, we recall from \cite[5.2]{T} the notion of Bousfield localization of derivators, from which we will deduce a reformulation of small presentation.

\begin{definition}
\label{bousfield}
A derivator $\D$ admits a left Bousfield localization by a subset $S$ of $\D(e)$ if there exists a cocontinuous morphism of derivators 
\begin{equation*}
\gamma:\D\longrightarrow L_S\D
\end{equation*}
mapping the elements of $S$ to isomorphisms in $L_S\D(e)$ and such that for any other derivator $\D'$ the morphism $\gamma$ induces an equivalence of categories
\begin{equation*}
\mathfrak{Der}^r(L_S\D,\D')\longrightarrow\mathfrak{Der}^r_S(\D,\D'),
\end{equation*}
where $\mathfrak{Der}^r_S(\D,\D')$ denotes the category of cocontinuous morphisms of derivators which send the elements of $S$ to isomorphisms in $\D'(e)$.
\end{definition}

Small presentation is a special case of Bousfield localization.

\begin{proposition}
\label{bolo}
If $\D$ is a derivator of small presentation $(\CC,S)$, for some category $\CC$ and some set $S$ as in definition \ref{smpres}, then $\D$ is equivalent to the left Bousfield localization $L_S\F(\CC)$. 
\end{proposition}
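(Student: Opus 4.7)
The plan is to construct the comparison using the universal property of $L_S\F(\CC)$ and then verify that it is an equivalence with the help of the adjunction provided by the small presentation.

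First, I would unpack the data. Since $\D$ has small presentation $(\CC,S)$, there is a localization $\theta:\F(\CC)\rightleftarrows\D:\chi$ in the sense of the definition above: $\theta$ is a left adjoint, hence cocontinuous, and the counit $\epsilon:\theta\circ\chi\Rightarrow 1_\D$ is an isomorphism. By Definition \ref{smpres}, the preimage under $\theta_e:\F(\CC)(e)\to\D(e)$ of the class of isomorphisms is exactly the class of $S$-local equivalences; in particular $\theta$ inverts $S$. The universal property of $L_S\F(\CC)$ (Definition \ref{bousfield}) therefore produces, uniquely up to isomorphism, a cocontinuous morphism $\tilde\theta:L_S\F(\CC)\to\D$ together with an isomorphism $\tilde\theta\circ\gamma\cong\theta$.

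Next, I would construct a quasi-inverse. Set $\bar\chi:=\gamma\circ\chi:\D\to L_S\F(\CC)$. On the one hand
\[
\tilde\theta\circ\bar\chi\;=\;\tilde\theta\circ\gamma\circ\chi\;\cong\;\theta\circ\chi\;\cong\;1_\D,
\]
where the last isomorphism is $\epsilon$. On the other hand, to obtain $\bar\chi\circ\tilde\theta\cong 1_{L_S\F(\CC)}$, the universal property of $L_S\F(\CC)$ reduces the problem to producing an isomorphism $(\bar\chi\circ\tilde\theta)\circ\gamma\cong\gamma$; and $(\bar\chi\circ\tilde\theta)\circ\gamma\cong\gamma\circ\chi\circ\theta$, so the task becomes to show that $\gamma$ applied to the unit $\eta:1_{\F(\CC)}\Rightarrow\chi\theta$ of the adjunction $\theta\dashv\chi$ is invertible.

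The core verification proceeds in two stages. From the triangle identity $(\epsilon\ast\theta)\circ(\theta\ast\eta)=1_\theta$ and the fact that $\epsilon$ is an isomorphism, $\theta\ast\eta$ is an isomorphism; so for every $\DD\in\mathfrak{Dia}$ and every $X\in\F(\CC)(\DD)$ the morphism $\theta_\DD(\eta_{\DD,X})$ is invertible in $\D(\DD)$. By axiom 2 of derivators, applied after $c_D^\ast$ and combined with the pseudo-naturality of $\theta$ and $\eta$, this amounts to saying that $\theta_e(\eta_{e,c_D^\ast X})$ is an isomorphism in $\D(e)$; by Definition \ref{smpres} this means that each such $\eta_{e,c_D^\ast X}$ is an $S$-local equivalence. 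Since $\gamma$ is the Bousfield localization at $S$, it inverts the entire class of $S$-local equivalences, and hence $\gamma_e(\eta_{e,c_D^\ast X})$ is invertible in $L_S\F(\CC)(e)$; applying axiom 2 once more in $L_S\F(\CC)$, together with the pseudo-naturality of $\gamma$, propagates this to show that $\gamma\ast\eta$ is an invertible modification.

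The main obstacle I expect is precisely the step asserting that $\gamma$ inverts every $S$-local equivalence, not merely the elements of $S$. The universal property in Definition \ref{bousfield} is phrased only for $S$, so this requires a saturation argument: either a purely derivator-theoretic one showing that the class of morphisms inverted by any cocontinuous morphism of derivators which inverts $S$ already contains all $S$-local equivalences, or a detour through the model-categorical realization $L_S\F(\CC)\simeq\Phi(sSet^{\CC^{op}}/S)$, where this is a classical consequence of the construction of Bousfield localization. Granting this, the pair $(\tilde\theta,\bar\chi)$ exhibits the equivalence $\D\simeq L_S\F(\CC)$ in $\mathfrak{Der}^r$.
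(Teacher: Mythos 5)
The paper does not prove this proposition directly: its ``proof'' is a citation of Cisinski's result as recorded in \cite[5.4]{T}, whose content is precisely that the localization in question exists and is realized by the derivator of the localized model category (equivalently, that the localization morphism $\theta:\F(\CC)\to\D$ of a small presentation has the universal property of Definition \ref{bousfield}). Your attempt to give a direct argument is therefore a different route, but it has a genuine gap, and you have correctly located it yourself: everything hinges on the claim that $\gamma:\F(\CC)\to L_S\F(\CC)$ inverts \emph{all} $S$-local equivalences at level $e$, while Definition \ref{bousfield} only gives you inversion of the elements of $S$. This is not a routine saturation lemma that can be waved through: the two fixes you propose are either exactly the missing content (a derivator-theoretic saturation statement, which is where the real work of Cisinski's theorem lies) or circular in this context, since the identification $L_S\F(\CC)\simeq\Phi(sSet^{\CC^{op}}/S)$ is, in view of the sentence preceding the proposition ($\D\simeq\Phi(sSet^{\CC^{op}}/S)$ by Renaudin's theorem), essentially equivalent to the statement being proved. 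Relatedly, you never address the existence of $L_S\F(\CC)$: Definition \ref{bousfield} is only a universal property, and the proposition implicitly asserts that the localization exists because $\D$ itself computes it; starting from ``the'' object $L_S\F(\CC)$ presupposes this.

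There is also a smaller but real problem in the reduction step. To deduce $\bar\chi\circ\tilde\theta\cong 1_{L_S\F(\CC)}$ from $(\bar\chi\circ\tilde\theta)\circ\gamma\cong\gamma$ you invoke the equivalence $\mathfrak{Der}^r(L_S\F(\CC),L_S\F(\CC))\to\mathfrak{Der}^r_S(\F(\CC),L_S\F(\CC))$, but its source consists of \emph{cocontinuous} morphisms, and $\bar\chi=\gamma\circ\chi$ is built from the right adjoint $\chi$ of the localization, which is continuous but not visibly cocontinuous; so $\bar\chi\circ\tilde\theta$ is not known to lie in the category to which fullness and essential injectivity apply. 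A cleaner way to organize a direct proof is to show that the pair $(\D,\theta)$ itself satisfies the universal property of Definition \ref{bousfield} (the hard part being essential surjectivity: factoring an arbitrary cocontinuous $S$-inverting morphism $\F(\CC)\to\D'$ through $\theta$, which uses Theorem \ref{cis} and the model-categorical Bousfield localization), and then conclude by uniqueness of objects with a given universal property; this is in substance what the cited result does.
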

\begin{proof}
This result, due to Cisinski, is \cite[5.4]{T}.
\end{proof}

We would like now to translate the notions introduced above in terms of models by means of the biequivalence $\Upsilon:\mathfrak{hMod}^{ps}_\T\rightarrow\mathfrak{Der}^r$. 
Note, however, that we can not use this biequivalence to transfer the notion of localization from derivators to models: in general, of the two morphisms forming a localization of derivators only one is a morphism in $\mathfrak{Der}^r$. Nevertheless, we can reformulate finite presentation in terms of models by means of proposition \ref{bolo} as it uses only cocontinuous morphisms.

Observe that, as localizations of categories are coinverters, similarly, derivators of small presentation, regarded as Bousfield localizations, can be written as coinverters. 

\begin{lemma}
\label{tab}
If $\D$ is a derivator of small presentation $(\CC,S)$, then it is equivalent to the coinverter
\begin{displaymath}
\xymatrix{
\D\simeq{\rm coinv}\Big(\T(\tilde{S},-) \rtwocell^s_t{\eta} & \T(\CC,-)\Big),\qquad\qquad
}
\end{displaymath}
computed in $\mathfrak{hMod}_\T^{ps}$, where $\tilde{S}$ is the subcategory of the category of arrows of $\CC$ spanned by $S$ ($s$, $t$ and $\eta$ are defined below in the proof).
\end{lemma}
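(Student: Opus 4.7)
The plan is to establish, via the biequivalence $\Upsilon : \mathfrak{hMod}^{ps}_\T \to \mathfrak{Der}^r$, that both $\Omega(\D)$ and the candidate coinverter corepresent the same 2-functor on $\mathfrak{hMod}^{ps}_\T$, and then to conclude by the bicategorical Yoneda lemma. First I would unfold the data on the right. The subcategory $\tilde{S}$ of the category of arrows $\CC^{\mathbbm 2}$ spanned by $S$ carries domain and codomain functors $s_0, t_0 : \tilde{S} \to \CC$ in $\mathfrak{Dia}$ together with the tautological natural transformation $\eta_0 : s_0 \Rightarrow t_0$ whose component at $f \in S$ is $f$ itself; applying the representable functor $\T(-,-) : \Gk^{op} \to \mathfrak{hMod}^{ps}_\T$, and reindexing along $\Gk = \mathfrak{Dia}^{op}$, then produces the displayed morphisms $s, t : \T(\tilde{S}, -) \rightrightarrows \T(\CC, -)$ and the 2-cell $\eta$ between them (the smallness of $\tilde{S}$ and its membership in $\mathfrak{Dia}$ being a technical point secured by the standing hypotheses on $S$ and $\mathfrak{Dia}$).

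Next I would unwind the universal property of ${\rm coinv}(\eta)$. Since $\mathfrak{hMod}^{ps}_\T$ admits weighted homotopy 2-colimits and coinverters are a standard PIE-colimit, there is, for every homotopy model $\D'$, a natural equivalence between $\mathfrak{hMod}^{ps}_\T({\rm coinv}(\eta), \D')$ and the full subcategory of $\mathfrak{hMod}^{ps}_\T(\T(\CC, -), \D')$ spanned by those $\phi$ for which the whiskered 2-cell $\phi \ast \eta$ is invertible. The bicategorical Yoneda lemma applied to the representable $\T(\CC, -)$ identifies this with a full subcategory of $\D'(\CC)$; translating via $\Upsilon$ and Proposition \ref{repf} (in particular the explicit description of $\Xi$ via formula (\ref{cisform}) together with the identification $\F(\CC) \simeq \Upsilon(\T(\CC, -))$), this subcategory becomes precisely $\mathfrak{Der}^r_S(\F(\CC), \Upsilon(\D'))$ in the sense of Definition \ref{bousfield}, namely the cocontinuous morphisms of derivators $\F(\CC) \to \Upsilon(\D')$ sending every element of $S$ to an isomorphism in $\Upsilon(\D')(e)$.

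Finally I would invoke Proposition \ref{bolo} and Definition \ref{bousfield}: $\D \simeq L_S \F(\CC)$, and for every right derivator $\D''$ one has an equivalence $\mathfrak{Der}^r(\D, \D'') \simeq \mathfrak{Der}^r_S(\F(\CC), \D'')$ natural in $\D''$. Setting $\D'' = \Upsilon(\D')$ and transporting across the biequivalence $\Upsilon$ yields a 2-natural equivalence
\[
\mathfrak{hMod}^{ps}_\T(\Omega(\D), \D') \simeq \mathfrak{hMod}^{ps}_\T({\rm coinv}(\eta), \D'),
\]
from which the bicategorical Yoneda lemma gives the desired equivalence $\D \simeq {\rm coinv}(\eta)$ in $\mathfrak{hMod}^{ps}_\T$.

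The hardest step will be matching the two inversion conditions in the middle paragraph: on the coinverter side, invertibility of $\phi \ast \eta$ is phrased as a 2-cell condition in $\D'(\CC)^{\mathbbm 2}$ (equivalently in $\D'(\tilde{S})$), whereas in Definition \ref{smpres} the condition on $S$ is phrased componentwise in $\F(\CC)(e)$ as the collapsing of $S$-local equivalences onto isomorphisms. Bridging them will require axiom 2 (conservativity) to reduce the 2-cell condition to pointwise invertibility at every object of $\tilde{S}$, combined with the explicit identifications from Proposition \ref{repf} and formula (\ref{cisform}), which match each such pointwise component with the image of the corresponding $f \in S$ in $\Upsilon(\D')(e)$ under $\phi$.
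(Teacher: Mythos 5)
Your proposal is correct and follows essentially the same route as the paper: identify the two parallel morphisms and the 2-cell with the source/target functors $\tilde{S}\to\CC$ and the tautological transformation via the (bicategorical) Yoneda lemma, use that coinverters are PIE-weighted (hence compute the homotopy/bi-colimit), and then match the coinverter's universal property with that of the Bousfield localization $L_S\F(\CC)\simeq\D$ from Proposition \ref{bolo}, transported through $\Upsilon$ and Proposition \ref{repf}. The only difference is expository: you spell out the corepresentability comparison and the reduction of invertibility of $\phi\ast\eta$ to pointwise invertibility via axiom 2, which the paper compresses into its final sentence.
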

\begin{proof}
With $\D$ being identified with a 
homotopy model in $\mathfrak{hMod}^{ps}_\T$, by the Yoneda lemma, the diagram
\begin{gather}
\begin{aligned}
\xymatrix{
\T(\tilde{S},-) \rtwocell^s_t{\eta} & \T(\CC,-)
}
\end{aligned}
\label{coinv}
\end{gather}
corresponds in $\T$ to the diagram
\begin{displaymath}
\xymatrix{
\CC \rtwocell^u_v{\alpha} & \tilde{S},
}
\end{displaymath}
where $\T(u,-)=t$, $\T(v,-)=s$ and $\T(\alpha,-)=\eta$. As $\T(\CC,\tilde{S})\simeq{\rm Ho}[\tilde{S}^{op},sSet^{\CC^{op}}]$, the coinverter \eqref{coinv} is completely assigned by choosing $v$ and $u$ to be the obvious source and tail functors, and $\eta$ the canonical natural transformation between them. Since coiverters are PIE-colimits, and so they compute their non-strict counterparts, and by lemma \ref{bolo}, it follows that the universal property of the coinverter \eqref{coinv} is just the universal property of the  left Bousfield localization of derivators $\D\simeq L_S\F(\CC)$.
\end{proof}

\begin{theorem}
\label{main77}
If a model of $\T$ is a Bousfield localization of a representable one, then it is a homotopy $\lambda$-presentable object of $\mathfrak{hMod}^{ps}_\T$.
\end{theorem}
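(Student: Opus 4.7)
The plan is to express any Bousfield localization of a representable model as a $\lambda$-small weighted homotopy $2$-colimit of representable models, and then to invoke the general principle that $\lambda$-small weighted homotopy $2$-colimits of homotopy $\lambda$-presentable objects are themselves homotopy $\lambda$-presentable.

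First, given a model $\D$ which is a Bousfield localization $L_S\T(\CC,-)$ of the representable $\T(\CC,-)$, I would adapt lemma \ref{tab} (which is stated for derivators of small presentation, but whose argument uses only the universal property of the Bousfield localization and the Yoneda lemma) to produce an equivalence in $\mathfrak{hMod}^{ps}_\T$
\begin{displaymath}
\D\simeq\mathrm{coinv}\Big(\T(\tilde{S},-)\rtwocell^s_t{\eta} \T(\CC,-)\Big),
\end{displaymath}
where $\tilde{S}$ denotes the relevant subcategory of the arrow category of $\CC$, and $s$, $t$, $\eta$ are obtained by Yoneda from the source, target and canonical natural transformation in $\T$. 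Since $\mathfrak{Dia}$ is closed under finite limits and all categories in $\mathfrak{Dia}$ are $\lambda$-small by assumption, $\tilde{S}$ still yields a representable in $\T$, so both vertices are representable; moreover, the indexing $2$-category of a coinverter is a finite (hence $\lambda$-small) PIE-weight, so the coinverter is a $\lambda$-small weighted homotopy $2$-colimit.

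Next, by lemma \ref{ultimo}, the representable models $\T(\tilde{S},-)$ and $\T(\CC,-)$ are homotopy $\lambda$-presentable objects of $\mathfrak{hMod}^{ps}_\T$. It then suffices to prove that, for a $\lambda$-small weight $G:\E\to\mathfrak{Cat}$ and a diagram $F:\E\to\mathfrak{hMod}^{ps}_\T$ whose values are homotopy $\lambda$-presentable, the weighted homotopy $2$-colimit $G\star_h F$ is again homotopy $\lambda$-presentable. For any homotopy $\lambda$-filtered diagram $H:\II\to\mathfrak{hMod}^{ps}_\T$ the defining adjunction of $G\star_h F$, the hypothesis on $F(E)$, and the commutation of $\lambda$-small weighted homotopy $2$-limits with $\lambda$-filtered homotopy colimits in $\mathfrak{Cat}$ (lemma \ref{hofiltcolim} together with \cite[6.10]{LR}) give equivalences
\begin{align*}
\mathfrak{hMod}^{ps}_\T(G\star_h F,\mathrm{hocolim}\,H) & \simeq [\E,\mathfrak{Cat}](G,\mathfrak{hMod}^{ps}_\T(F-,\mathrm{hocolim}\,H))\\
 & \simeq [\E,\mathfrak{Cat}](G,\mathrm{hocolim}\,\mathfrak{hMod}^{ps}_\T(F-,H))\\
 & \simeq \mathrm{hocolim}\,[\E,\mathfrak{Cat}](G,\mathfrak{hMod}^{ps}_\T(F-,H))\\
 & \simeq \mathrm{hocolim}\,\mathfrak{hMod}^{ps}_\T(G\star_h F,H),
\end{align*}
which is exactly homotopy $\lambda$-presentability of $G\star_h F$. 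Applying this closure property to the coinverter representation of $\D$ yields the theorem.

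The main obstacle I expect is the middle step of the chain of equivalences above, namely the interchange between the $\lambda$-small homotopy limit $[\E,\mathfrak{Cat}](G,-)$ and the $\lambda$-filtered homotopy colimit. In the $2$-categorical homotopical setting this interchange is known to hold in $\mathfrak{Cat}$ for $\lambda$-presentable weights $G$, but some care is needed because the hom-categories $\mathfrak{hMod}^{ps}_\T(F-,H(-))$ are computed as hom-categories in $\Pro s(\Gk,\mathfrak{Cat})$ (via the biequivalences of lemma \ref{redf} and corollary \ref{nn}) rather than in $[\Gk,\mathfrak{Cat}]$. To handle this rigorously one must pass through the cofibrant replacement $\Qc$ and use that homotopy $\lambda$-filtered colimits in $\mathfrak{hMod}^{ps}_\T$ are computed pointwise via $\Qc$ (lemma \ref{hofiltcolim}); once this is set up, everything else is either routine or already explicitly established in section \ref{accessibility}.
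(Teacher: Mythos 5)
Your proposal follows essentially the same route as the paper: it identifies the Bousfield localization of a representable with a coinverter of representable models via (an adaptation of) lemma \ref{tab}, and then concludes by closure of homotopy $\lambda$-presentable objects under $\lambda$-small weighted homotopy 2-colimits. The only difference is that you re-derive this closure property by a direct chain of equivalences, whereas the paper simply cites \cite[9.5]{LR}; your argument is the standard proof of that cited result, so the proposal is correct.
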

\begin{proof}
Since, by \cite[9.5]{LR}, $\lambda$-small weighted homotopy colimit of homotopy $\lambda$-presentable objects are homotopy $\lambda$-presentable, the theorem follows from lemma \ref{tab}. 
\end{proof}

As a consequence we deduce the following property for small presentation of derivators. 

\begin{theorem}
\label{main7}
A derivator of small presentation is a homotopy $\lambda$-presentable object of $\mathfrak{Der}^r$.
\end{theorem}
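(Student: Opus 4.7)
The plan is to reduce the claim to Theorem \ref{main77} (the sketch-theoretic counterpart) and then transport the conclusion along the biequivalence of Theorem \ref{Main}. Given $\D \in \mathfrak{Der}^r$ of small presentation $(\CC, S)$, I would first choose a homotopy model $\MM \in \mathfrak{hMod}^{ps}_\T$ with $\Upsilon(\MM) \simeq \D$; this is possible since $\Upsilon$ is biessentially surjective by Theorem \ref{Main}. By Proposition \ref{bolo}, $\D$ is equivalent to the Bousfield localization $L_S\F(\CC)$, and by Proposition \ref{repf} the derivator $\F(\CC)$ corresponds, via $\Upsilon$, to the representable model $\T(\CC, -)$. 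Thus $\MM$ itself is, up to equivalence in $\mathfrak{hMod}^{ps}_\T$, a Bousfield localization of $\T(\CC, -)$.

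Second, I would apply Lemma \ref{tab}, which exhibits $\MM$ explicitly as the coinverter in $\mathfrak{hMod}^{ps}_\T$ of a canonical 2-morphism $\eta : s \Rightarrow t$ between the representable models $\T(\tilde S, -) \rightrightarrows \T(\CC, -)$. This presentation makes $\MM$ a Bousfield localization of the representable $\T(\CC, -)$ in the sketch-theoretic sense, so that Theorem \ref{main77} applies directly and yields that $\MM$ is a homotopy $\lambda$-presentable object of $\mathfrak{hMod}^{ps}_\T$.

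Third, I would transport this presentability statement back across $\Upsilon$. The key observation is that biequivalences preserve and reflect weighted homotopy (co)limits (cited as \cite[7.1]{LR} earlier in the paper), and, for any object $C$ of a 2-category $\Ck$, the representable $\Ck(C, -)$ corresponds under a biequivalence $\Upsilon : \Ck \to \Ck'$ to $\Ck'(\Upsilon C, -)$ up to equivalence. Consequently, preservation of homotopy $\lambda$-filtered colimits by $\mathfrak{hMod}^{ps}_\T(\MM, -)$ is equivalent to preservation of such colimits by $\mathfrak{Der}^r(\D, -)$, giving homotopy $\lambda$-presentability of $\D$ in $\mathfrak{Der}^r$.

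The main obstacle is the bookkeeping in the third step: one has to make sure that the transfer of homotopy $\lambda$-presentability along a biequivalence really does go through, which amounts to checking that a biequivalence sends homotopy $\lambda$-filtered colimits to homotopy $\lambda$-filtered colimits and respects representables up to equivalence. Everything else (Steps 1 and 2) is essentially an unpacking of Proposition \ref{bolo}, Proposition \ref{repf}, Lemma \ref{tab}, and Theorem \ref{main77}, all of which have been established earlier in the paper.
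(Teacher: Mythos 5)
Your proposal is correct and follows essentially the same route as the paper: Theorem \ref{main7} is deduced from Theorem \ref{main77} by identifying a derivator of small presentation with a Bousfield localization of a representable model via Proposition \ref{bolo}, Proposition \ref{repf} and Lemma \ref{tab}, and then passing across the biequivalence $\Upsilon$ of Theorem \ref{Main}. The paper leaves the transport of homotopy $\lambda$-presentability along $\Upsilon$ implicit, and your third step correctly supplies that detail, using that biequivalences preserve and create weighted homotopy colimits (\cite[7.1]{LR}) and respect hom-categories up to equivalence.
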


\end{document}